\newtheorem{thm}{Theorem}[section]
\newtheorem{prop}[thm]{Proposition}
\newtheorem{cor}[thm]{Corollary}
\theoremstyle{remark}
\newtheorem{rem}[thm]{Remark}
\theoremstyle{definition}
\newtheorem{defn}[thm]{Definition}
\theoremstyle{Claim}
\newcommand{\lra}{\longrightarrow}
\newcommand{\NN}{{\mathbb{N}}}
\newcommand{\EE}{\mathbb{E}}
\newcommand{\PP}{\mathbb{P}}
\newcommand{\R}{\mathbb{R}}
\newcommand{\Rd}{\mathbb{R}^d}
\newcommand{\mB}{\mathcal B}
\newcommand{\mV}{\mathcal V}
\newcommand{\mI}{\mathcal I}
\newcommand{\mK}{\mathcal K}
\newcommand{\mT}{\mathcal T}
\newcommand{\mP}{\mathcal P}
\newcommand{\mF}{\mathcal F}
\newcommand{\mM}{\mathcal M}
\newcommand{\mW}{\mathcal W}
\newcommand{\X}{\mathcal{X}}
\newcommand{\cE}{\mathcal{E}}
\newcommand{\eps}{\varepsilon}
\newcommand{\half}{\frac 1 2}
\newcommand{\m}{\mu}
\DeclareMathOperator{\diver}{div}
\DeclareMathOperator{\supp}{supp}
\theoremstyle{plain}
\def\sideremark#1{\ifvmode\leavevmode\fi\vadjust{
		\vbox to0pt{\hbox to 0pt{\hskip\hsize\hskip1em
				\vbox{\hsize3cm\tiny\raggedright\pretolerance10000
					\noindent #1\hfill}\hss}\vbox to8pt{\vfil}\vss}}}
\newcommand{\D}{\Delta}
\newcommand{\Dx}{{\Delta x}}
\newcommand{\Ver}{{\mV}^\Dx}
\newcommand{\tr}{\on{tr}}
\newcommand{\mbb}{\mathbb}
\newcommand{\on}{\operatorname}
\numberwithin{equation}{section}
\title{Approximation of  viscous transport and conservative equations with one sided Lipschitz velocity fields\thanks{Submitted to the editors DATE.
\funding{This work was partially supported by by  INdAM-GNAMPA, PRIN PNRR P20225SP98
 	``Some mathematical approaches to climate change and its impacts'', ``INdAM--GNCS Project'', codice CUPE53C24001950001.}}}
\author{Fabio Camilli\thanks{Dip. di Ingegneria e Geologia, Univ. ``G. D'Annunzio'' Chieti-Pescara,
			viale Pindaro 42, 65127 Pescara (Italy),
  (\email{fabio.camilli@unich.it}).}
\and Adriano Festa \thanks{Dip. di Scienze Matematiche ``G. L. Lagrange'', Politecnico di Torino, Corso Duca degli Abruzzi  24, 10129 Torino  (Italy),
  (\email{adriano.festa@polito.it}).}
\and Luciano Marzufero\thanks{Facoltà di Economia e Management, Libera Università di Bolzano, piazza Università  1,
		39100 Bolzano   (Italy)}, (\email{luciano.marzufero@unibz.it}).}
\begin{document}

	\maketitle

\begin{abstract}
The aim of this work is to investigate semi-Lagrangian  approximation schemes on unstructured grids for   viscous transport  and   conservative equations with measurable coefficients that satisfy a one-sided Lipschitz condition.  To establish the convergence of the schemes, we exploit the  characterization of the solution for these equations   expressed in terms of measurable time-dependent viscosity solution  and, respectively, duality solution. 	We supplement our theoretical analysis with various numerical examples	 to illustrate the features of the schemes.
\end{abstract}

\begin{keywords}
one-sided Lipschitz condition, viscous transport equation, measure-valued  solution,  semi-La\-gran\-gian schemes.
\end{keywords}

\begin{MSCcodes}
35K20, 35D30, 49L25, 65M12.
\end{MSCcodes}

\section{Introduction}
We investigate semi-Lagrangian approximation schemes for the viscous transport equation  
\begin{equation}\label{eq:HJ}
	-\frac{\partial u}{\partial t} - \tr[a(t,x)D^2 u] + b(t,x) \cdot D u = 0 \quad \text{in } (0,T) \times \mathbb{R}^d, \quad \text{with } u(T,\cdot) = u_T,
\end{equation}
and the viscous conservative equation  
\begin{equation}\label{eq:FP}
	\frac{\partial f}{\partial t} -  D^2\cdot [a(t,x)f] -\diver(  b(t,x) f )  = 0 \quad \text{in } (0,T) \times \mathbb{R}^d, \quad \text{with } f(0,\cdot) = f_0.
\end{equation}
The well-posedness of these problems has been extensively studied in the literature. For the first-order case where $a(x) \equiv 0$, relevant results can be found in \cite{am, dpl}. The second-order case has been investigated in \cite{lbl, f}, and a more comprehensive list of references is available in \cite{ls}. These works generally assume that the divergence of $b$ is absolutely continuous with respect to the Lebesgue measure.

In this work, we consider $b: [0,T] \times \mathbb{R}^d \lra \mathbb{R}^d$ as a measurable function satisfying the one-sided Lipschitz condition ((OSLC) in short)  
\begin{equation}
(b(t,x) - b(t,y)) \cdot (x-y) \geq -C(t)|x-y|^2, \quad \forall t \in [0,T], \, x,y \in \mathbb{R}^d, \label{OSLC}
\end{equation}
for some $C \in L^1(0,T)$ with $C \geq 0$. Additionally, $a$ is assumed to be a regular,  degenerate matrix. Since the (OSLC) framework does not rule out the possibility that the measure \(\diver b\)  may include a singular component, this limitation prevents   the application of the aforementioned theories to the present situation. 

For the first-order case satisfying the (OSLC) condition, the problems \eqref{eq:HJ} and \eqref{eq:FP} have been addressed for example in \cite{bj, bjm, cs, pr} (see Remark \ref{rem:first_order} concerning the sign of $b$). In \cite{ls},  a general theory  has been recently developed both in the first order case and in the second order one. In this latter work, various characterizations of the  ``good" solutions, i.e. solutions obtained as limits of equations with regular approximations of $ b $,  are provided. Specifically, the  ``good" solutions are characterized as measurable time-dependent viscosity solutions   for the transport equation, and duality solutions for the conservative equation.  

Regarding the approximation of these equations in the regular case, a vast body of literature exists due to their relevance in diverse fields such as fluid dynamics, financial mathematics, optimal transport, and Mean-Field Games. For first-order equations satisfying the (OSLC) condition, we refer to \cite{dl, dlv, gj, CCF}. The second-order case in the Di Perna-Lions setting has been recently studied in \cite{ns}, whereas, to the best of our knowledge, no articles have addressed the (OSLC) case.

Here, we examine semi-Lagrangian approximation schemes for \eqref{eq:HJ} and \eqref{eq:FP} under the (OSLC) condition. For an introduction to semi-Lagrangian scheme for the first-order case, we refer to  \cite{ff}, while the second-order case has been studied, for instance, in \cite{bf, cf, cs1, cs2}.  
Semi-Lagrangian methods offer a distinct advantage over traditional approaches like finite differences and finite volumes. Instead of directly approximating the partial differential equations themselves, they focus on approximating the underlying dynamics. This allows  to employ the assumption (OSLC) to derive properties of the discrete trajectories, which subsequently inform the approximation scheme. These properties enable us to establish the convergence of the semi-Lagrangian approximation scheme for the viscous transport equation by exploiting the characterization of the solution as time-measurable viscosity solution. The proof utilizes a classical stability result by Barles and Souganidis \cite{bs}, adapted to this specific context.  

For the conservation equation, we adopt the scheme proposed in \cite{cs1, cs2}, which is designed to handle linear and nonlinear Fokker-Planck equations with continuous coefficients on structured grids. For linear equations, our work generalizes their results  to encompass cases involving irregular coefficients and general unstructured grids. Moreover, we precisely characterize the scheme's limit as a duality solution of \eqref{eq:FP}. These generalizations present new challenges, particularly in establishing the regularity properties of the discrete solution, which are critical for achieving convergence in the limit.  At the discrete level, we derive a  duality formula analogous to the one in the continuous setting and, exploiting this formula and the convergence result for \eqref{eq:HJ}, we prove that the scheme converges to the duality solution of \eqref{eq:FP} in the sense of weak convergence of measures.\\
While the results presented in this paper apply to the first-order problem, it is important to note that, in this case, error estimates for a finite difference scheme are derived in terms of the \( L^\infty([0,T], L^1(\mathbb{R}^d)) \) norm for the transport equation (see \cite{dl}) and the \( p \)-Wasserstein distance for conservative equation (see \cite{dlv}).\\
In the numerical section, we show in practice the performances of the numerical techniques proposed here. We consider an example in a one-dimensional case, where it is possible to compare the approximation with the analytical solution of the problem  and two examples  in the two dimensional  case, the first one with a continuous vector field, and the second one with a discontinuous vector field   verifying the (OSLC) condition.
\subsection*{Notations}
Let $ C_b(\mathbb{R}^d) $ represents the set of bounded continuous functions   and $C^{0,1}(\R^d)$ the set of the Lipschitz continuous function in $\R^d$. 
We denote by $ \mM(\mathbb{R}^d) $ the space of non-negative measures on $ \mathbb{R}^d $, defined with respect to the Borel $ \sigma $-algebra $ \mathcal{B}(\mathbb{R}^d) $. This space is equipped with the weak topology $ \sigma(\mM(\mathbb{R}^d), C_b(\mathbb{R}^d)) $.  Next, we consider the space $ C([0, T]; \mM(\mathbb{R}^d)) $, which represents flows of measures. This space is endowed with the topology of uniform convergence over $[0, T]$. \\
Let $\mP(\R^d)$ be the subset of ${\mathcal M}(\R^d)$ consisting of probability measures defined on $(\R^d, \mathcal{B}(\R^d))$. Furthermore, we define $\mP_p(\R^d)$ as the space of probability measures that have a finite $p$-th order moment for $p \geq 1$, given by:  
$$  
\mP_p(\R^d) = \left\{ \mu \in \mP(\R^d) : \int_{\R^d} |x|^p \, \mu(dx) < \infty \right\}.  
$$  
This space is equipped with the standard Wasserstein distance $\mW_p$ (see \cite{vil}). For $p=1$, by the  Kantorovich–Rubinstein duality formula, we have for $\mu,\mu'\in\mP_1(\R^d)$
\[
\mW_1(\m,\m')=\sup\left\{\int_{\R^d} \phi(x)d(\m-\m'):\,\phi:\R^d\lra\R\,\mbox{is $1$-Lipschitz continuous }\right\}.
\] 
For a locally bounded function $g:\R^d\to \R$, we define
\begin{align*}
&\liminf_{z\to x} \hskip -2pt_{*}\, g(z)	=\lim_{r\to 0}\,\inf\{g(z):\,z\in B(0,r)\},\\
&\limsup_{z\to x} \hskip -2pt^{*}\, g(z)	=\lim_{r\to 0}\,\sup\{g(z):\,z\in B(0,r)\}.
\end{align*}
\section{The model problem}
The results of this section are taken from \cite{ls}, whose notation we will adhere to. 
We assume that $b:[0,T]\times\R^d\lra\R^d$ is a measurable function satisfying: for all $t \in [0,T]$ and $x,y \in \R^d$,
\begin{equation}\label{hyp:OSLC}
	\begin{split}
		&\quad \sup_{x \in \R^d} \frac{|b(t,x)|}{1 + |x|} \le C_0(t),   \\[4pt]
		&\quad (b(t,x) - b(t,y)) \cdot (x-y) \ge -C_1(t)|x-y|^2	,
	\end{split}
\end{equation}
for some non negative functions $C_0,C_1 \in L^1(0,T)$. Moreover  $a = \frac{1}{2} \sigma \sigma^T$ for $\sigma: [0,T] \times \R^d \lra \R^{d \times r}$ satisfying
\begin{equation}\label{hyp:sigma}
	\sup_{x \in \R^d} \frac{ |\sigma(t,x)|}{1 + |x|} + \sup_{y,z \in \R^d} \frac{|\sigma(t,y) - \sigma(t,z)|}{|y-z|} \le C_2(t)
\end{equation}
for a non negative function $ C_2\in L^2(0,T)$.
We consider the   SDE
\begin{equation}\label{eq:SDE}
	\begin{dcases}
		d_s \Phi_{s,t}(x) = -b(s,\Phi_{s,t}(x))ds + \sigma(s,\Phi_{s,t}(x)) dW_s  & s \in [t,T],\\
		\Phi_{t,t}(x) = x\in\R^d,
	\end{dcases}
\end{equation}
where $W: \Omega \times [0,T] \lra \R^r$ is a standard Brownian motion on a given probability space $(\Omega, \mF, \mbb P)$.
\begin{prop}\label{prop:stoch_flow}
	For every $(t,x) \in [0,T] \times \R^d$ and $\mathbb{P}$-almost surely, there exists a unique strong solution $\Phi_{s,t}(x)$ of \eqref{eq:SDE} defined on $[t,T] \times \R^d$. For all $p \in [2,\infty)$, there exists a constant $C = C_{p} > 0$ depending only on the assumptions \eqref{hyp:OSLC} and \eqref{hyp:sigma} such that
	\begin{equation}\label{eq:Lip_traj}
		\mbb E |\Phi_{s,t}(x) - \Phi_{s,t}(y)|^p \le C |x-y|^p \quad \text{for all } 0 \le t \le s \le T \text{ and } x,y \in \R^d,
	\end{equation}
	\begin{equation}\label{eq:bd_traj}
		\EE|\Phi_{s,t}(x)|^p \le C(|x|^p + 1) \quad \text{for all } 0 \le t \le s \le T \text{ and } x \in \R^d,
	\end{equation}
	and
	\begin{equation*} 
			\EE |\Phi_{s_1,t}(x) - \Phi_{s_2,t}(x)|^p  \le C (1 + |x|)|s_1 - s_2|^{p/2}
	\end{equation*}
   for all $t \in [0,T]$, $s_1,s_2 \in [t,T]$, and $ x \in \R^d$. Moreover, for all $0 \le r \le s \le t \le T$, we have $\Phi_{t,s} \circ \Phi_{s,r}= \Phi_{t,r}$  with probability one.
\par
\smallskip
	If $(b^\eps)_{\eps > 0}$ is a  regularization of $b$ satisfying 
	\begin{equation}\label{eq:breg}
		\left\{
		\begin{split}
			&(b^\eps)_{\eps > 0} \subset L^1([0,T], C^{0,1}(\R^d)), \quad \lim_{\eps \to 0} b^\eps = b \text{ a.e. in } [0,T] \times \R^d, \text{ and}\\
			&b^\eps \text{ satisfies \eqref{hyp:OSLC} uniformly in $\eps > 0$},
		\end{split}
		\right.
	\end{equation}
 then  the corresponding stochastic flows $\Phi^\eps$ converge locally uniformly to $\Phi$ as $\eps \to 0$ with probability one.
\end{prop}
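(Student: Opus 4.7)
The plan is to follow the classical regularisation-and-monotonicity strategy: I will first rely on the Lipschitz approximations $(b^\eps)$ provided by \eqref{eq:breg}, for which the SDE \eqref{eq:SDE} with $b$ replaced by $b^\eps$ admits a unique strong flow $\Phi^\eps$ by the standard Itô theory. I will then establish the estimates \eqref{eq:Lip_traj}-\eqref{eq:bd_traj} uniformly in $\eps$ and pass to the limit using the OSLC.

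Each of the three estimates at the $\eps$-level will come from a single Itô computation. For \eqref{eq:Lip_traj}, setting $X^\eps_s:=\Phi^\eps_{s,t}(x)-\Phi^\eps_{s,t}(y)$ and applying Itô to $|X^\eps_s|^p$ (which is admissible for $p\ge 2$), the drift contribution is, up to a positive factor of $p$,
\[
-|X^\eps_s|^{p-2}\,X^\eps_s\cdot\bigl(b^\eps(s,\Phi^\eps_{s,t}(x))-b^\eps(s,\Phi^\eps_{s,t}(y))\bigr)\le C_1(s)|X^\eps_s|^p
\]
by the OSLC in \eqref{hyp:OSLC}, while the quadratic-variation contribution is bounded by a multiple of $C_2(s)^2 |X^\eps_s|^p$ thanks to \eqref{hyp:sigma}. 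After a localisation argument to kill the martingale part and taking expectation, Gronwall (with $C_1\in L^1$ and $C_2\in L^2$) yields \eqref{eq:Lip_traj} with a constant independent of $\eps$. The moment bound \eqref{eq:bd_traj} follows by the same procedure applied to $(1+|\Phi^\eps_{s,t}(x)|^2)^{p/2}$ using the linear growth in \eqref{hyp:OSLC}-\eqref{hyp:sigma}. The time-modulus estimate is obtained from the integral form of \eqref{eq:SDE}, controlling the drift integral through Hölder and linear growth and the stochastic integral through Burkholder-Davis-Gundy, combined with \eqref{eq:bd_traj}.

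To construct $\Phi$ I will show that $(\Phi^\eps)$ is Cauchy: applying Itô to $|\Phi^\eps_{s,t}(x)-\Phi^\delta_{s,t}(x)|^2$ and splitting
\[
b^\eps(\Phi^\eps)-b^\delta(\Phi^\delta)=\bigl(b^\eps(\Phi^\eps)-b^\eps(\Phi^\delta)\bigr)+\bigl(b^\eps(\Phi^\delta)-b^\delta(\Phi^\delta)\bigr),
\]
the first summand is absorbed by the uniform OSLC exactly as above, while the second summand is shown to vanish in $L^1_{s,\omega}$ as $\eps,\delta\to 0$ by combining the a.e.\ convergence $b^\eps\to b$ with the moment bound \eqref{eq:bd_traj} and a dominated-convergence argument. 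Gronwall then delivers a Cauchy estimate in $L^2(\Omega)$; the uniform estimates together with Kolmogorov's continuity criterion applied jointly in $(\eps,s,x)$ upgrade this to the a.s.\ locally uniform convergence claimed in the proposition. The limit $\Phi$ inherits the three estimates and, passing to the limit in the integral form of the SDE (again by the a.e.\ convergence of $b^\eps$ and the moment control on $\Phi^\eps$), solves \eqref{eq:SDE}. Uniqueness is obtained by repeating the $|X|^2$-Itô computation for the difference of any two solutions and invoking Gronwall through the OSLC, and the flow identity $\Phi_{t,s}\circ\Phi_{s,r}=\Phi_{t,r}$ is then immediate, since both sides solve \eqref{eq:SDE} on $[r,T]$ with the same initial datum.

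The Cauchy step is the genuine obstacle: because $b$ is merely measurable, the ``bad" term $b^\eps(\Phi^\delta)-b^\delta(\Phi^\delta)$ admits no Lipschitz-type control in $\eps,\delta$ and must be handled purely by a.e.\ convergence, uniform linear growth and dominated convergence backed by the moment bounds on $\Phi^\delta$; the OSLC is exactly what supplies the coercive control on the ``good" term so that Gronwall can close the loop. Once this quantitative stability is in place, the remaining steps are essentially bookkeeping.
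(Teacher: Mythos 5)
A preliminary remark on the comparison you ask for: the paper contains no proof of this proposition at all --- Section~2 opens by declaring that all of its results are quoted from \cite{ls} --- so your argument can only be measured against the standard OSLC strategy (which is also the one behind that reference). The first half of your plan is sound and is indeed the standard computation: obtaining \eqref{eq:Lip_traj}, \eqref{eq:bd_traj} and the time modulus uniformly in $\eps$ from It\^o's formula, the one-sided bound $-(X-Y)\cdot\bigl(b^\eps(X)-b^\eps(Y)\bigr)\le C_1(s)|X-Y|^2$, Burkholder--Davis--Gundy and Gronwall is exactly right, and the uniqueness and flow-property arguments are routine once existence and convergence are settled.

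The genuine gap is the Cauchy step --- precisely the one you flag as ``the genuine obstacle''. You propose to kill the cross term $b^\eps(\Phi^\delta)-b^\delta(\Phi^\delta)$ by the a.e.\ convergence $b^\eps\to b$ together with dominated convergence backed by the moment bounds. This cannot work in the generality of the proposition: dominated convergence requires that the law of $(s,\Phi^\delta_{s,t}(x))$ on $[0,T]\times\R^d$ give no mass to the Lebesgue-null set where the convergence $b^\eps\to b$ fails, and nothing guarantees this. The matrix $a$ is explicitly allowed to degenerate (the paper even runs Test~1 with $a\equiv 0$), in which case $\Phi^\delta_{s,t}(x)$ is a deterministic curve whose occupation measure in $[0,T]\times\R^d$ is singular with respect to Lebesgue measure. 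Worse, the drift $-b$ under \eqref{hyp:OSLC} is compressive: it is exactly the mechanism that drives trajectories \emph{onto} the discontinuity set of $b$ in finite time (this is why \eqref{eq:FP} develops singular parts), so the flow spends positive time precisely where pointwise information on $b^\eps-b^\delta$ is unavailable. The same objection undermines your final step of ``passing to the limit in the integral form of the SDE'', and it even calls into question what $b(r,\Phi_{r,t}(x))$ means, since $b$ is defined only up to null sets while $\Phi$ may live on one. The way this loop is actually closed in the OSLC literature is quantitative rather than by dominated convergence: for convolution regularizations $b^\eps=b*\rho_\eps$, $b^\delta=b*\rho_\delta$ one doubles variables inside the one-sided condition, writing $x-y=(x-z)-(y-w)+(z-w)$ in $(x-y)\cdot\bigl(b^\eps(x)-b^\delta(y)\bigr)$ to obtain a bound of the form $-C_1(s)\bigl(|x-y|+\eps+\delta\bigr)^2-C(\eps+\delta)\bigl(1+|x|+|y|\bigr)$ that never evaluates $b^\eps-b^\delta$ along the flow; Gronwall then yields an explicit rate $\EE|\Phi^\eps-\Phi^\delta|^2\le C(\eps+\delta)$, general regularizations as in \eqref{eq:breg} are compared with a convolution one, and the solution of \eqref{eq:SDE} is interpreted through this limit. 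This quantitative estimate is also what makes the a.s.\ locally uniform convergence accessible (via the uniform space--time moduli and a compactness/identification argument), whereas a Kolmogorov criterion ``joint in $(\eps,s,x)$'' applied to a merely qualitative Cauchy property does not deliver the stated almost sure convergence. Without this (or an equivalent) device, your Gronwall loop does not close.
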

\subsection{The viscous transport equation}
\begin{thm}\label{thm:existence_HJB}
Let   $u_T \in C(\R^d)$ and define $u:[0,T]\times\R^d\lra\R$ by
	\begin{equation}\label{eq:formula_u}
		u(t,x) = \EE[u_T(\Phi_{T,t}(x))].
	\end{equation}
	  If $(b^\eps)_{\eps > 0}$ satisfy \eqref{eq:breg} and $u^\eps$ is the   solution of \eqref{eq:HJ} with velocity $b^\eps$, then, as $\eps \to 0$, $u^\eps$ converges locally uniformly to $u$. 
	  Moreover, if $u_T \in C^{0,1}(\R^d)$, then 
	\[
	\sup_{(t,x,y) \in [0,T] \times \R^d \times \R^d} \frac{|u(t,x) - u(t,y)|}{|x-y|} + \sup_{(r,s,x) \in [0,T] \times \R^d} \frac{|u(r,x) - u(s,x)|}{|r-s|^{1/2}(1 + |x|) } < \infty,
	\]
	and $u$ is a distributional solution of \eqref{eq:HJ}.
\end{thm}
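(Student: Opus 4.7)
The plan is to combine the Feynman--Kac representation with the flow stability from Proposition~\ref{prop:stoch_flow}. Since each $b^\eps$ is Lipschitz, $u^\eps$ is a classical solution of \eqref{eq:HJ} with velocity $b^\eps$; applying It\^o's formula to $s\mapsto u^\eps(s,\Phi^\eps_{s,t}(x))$, the signs in \eqref{eq:SDE} and \eqref{eq:HJ} are arranged so that the PDE terms cancel, yielding the representation $u^\eps(t,x)=\EE[u_T(\Phi^\eps_{T,t}(x))]$.

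To pass to the limit $\eps\to 0$, I would use the pathwise local uniform convergence $\Phi^\eps\to\Phi$ from Proposition~\ref{prop:stoch_flow}, the continuity of $u_T$, and the moment bound \eqref{eq:bd_traj}, which supplies the uniform integrability required to interchange limit and expectation (Vitali). Local uniformity in $(t,x)$ is obtained from the uniform-in-$\eps$ equicontinuity of $\{u^\eps\}$ on compact sets, available either from the estimates below when $u_T$ is Lipschitz, or via a density argument approximating a general continuous $u_T$ by Lipschitz functions.

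For the regularity of $u$ when $u_T\in C^{0,1}(\R^d)$, both estimates reduce to the quantitative flow bounds. The spatial Lipschitz estimate follows from \eqref{eq:Lip_traj} via Cauchy--Schwarz:
\[
|u(t,x)-u(t,y)| \le \mathrm{Lip}(u_T)\,\EE|\Phi_{T,t}(x)-\Phi_{T,t}(y)| \le \mathrm{Lip}(u_T)\,C^{1/2}\,|x-y|.
\]
For the $1/2$--H\"older estimate in time, the flow identity $\Phi_{T,r}=\Phi_{T,s}\circ\Phi_{s,r}$ together with the tower property gives $u(r,x)=\EE[u(s,\Phi_{s,r}(x))]$; combining the spatial Lipschitz bound just derived with the time-increment estimate of Proposition~\ref{prop:stoch_flow} yields $|u(r,x)-u(s,x)|\lesssim (1+|x|)^{1/2}|r-s|^{1/2}$, which implies the stated estimate since $(1+|x|)^{1/2}\le 1+|x|$.

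The main obstacle is the distributional solution property. I would pass to the limit in the weak formulation obtained by integrating by parts against $\phi\in C^\infty_c((0,T)\times\R^d)$:
\[
\int_0^T\!\!\int_{\R^d} u^\eps\bigl[\phi_t+\tr(D^2(a\phi))\bigr]\,dx\,dt - \int_0^T\!\!\int_{\R^d} u^\eps\, b^\eps\cdot D\phi\,dx\,dt - \int_0^T\!\!\int_{\R^d} u^\eps\,\phi\,d(\diver b^\eps)\,dt = 0.
\]
The first two integrals converge by local uniform convergence of $u^\eps$ and the uniform linear growth of $b^\eps$ from \eqref{hyp:OSLC}. For the third, the uniform (OSLC) bound on $b^\eps$ forces $\diver b^\eps\ge -d\,C_1(t)$, so the negative part of the family $\{\diver b^\eps\}$ is locally uniformly bounded; combined with the distributional convergence $b^\eps\to b$ in $L^1_{loc}$ (from pointwise convergence and uniform linear growth), up to extraction one obtains weak-$\ast$ convergence as Radon measures to the limit $\diver b$, and pairing against $u^\eps\phi$, which converges uniformly on the support of $\phi$, completes the proof.
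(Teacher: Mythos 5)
The paper does not prove this theorem: Section~2 opens by declaring that all results there are imported from \cite{ls}, so there is no in-paper argument to compare yours against. Judged on its own, your outline follows the route one would expect (and that \cite{ls} takes in spirit): Feynman--Kac for the regularized problems, pathwise stability of the flows from Proposition~\ref{prop:stoch_flow} to pass to the limit, and the quantitative flow estimates \eqref{eq:Lip_traj} and the time-increment bound, via Cauchy--Schwarz, for the Lipschitz and $1/2$-H\"older estimates. Those parts are correct.

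The step that needs repair is the distributional-solution argument. First, your weak formulation is not quite right: $a=\frac12\sigma\sigma^T$ is only locally Lipschitz, so $\tr(D^2(a\phi))$ is not a function and you may integrate by parts only once in the second-order term (and the sign in front of it should be negative); the usable formulation for a Lipschitz limit $u$ is $\int u\,\phi_t-\int \partial_j u\,\partial_i(a_{ij}\phi)+\int (b\cdot Du)\,\phi=0$. Second, routing the transport term through $\diver b^\eps$ creates two avoidable difficulties: you must upgrade the one-sided bound $\diver b^\eps\ge -dC_1(t)$ to a local total-variation bound (possible, via the boundary flux, but you did not say so), and, having obtained the limit identity in divergence form, you still owe a Leibniz rule $\diver(ub)=u\,\diver b+b\cdot Du$ for $u$ Lipschitz and $\diver b$ a measure in order to return to the non-divergence-form equation \eqref{eq:HJ}; without that last step you have proved a different identity. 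A cleaner path is to keep $b^\eps\cdot Du^\eps$ as a product: $b^\eps\to b$ in $L^1_{\mathrm{loc}}$ with uniform local bounds, while $Du^\eps\rightharpoonup Du$ weak-$*$ in $L^\infty_{\mathrm{loc}}$ (uniform Lipschitz bounds plus local uniform convergence of $u^\eps$), so $\int b^\eps\cdot Du^\eps\,\phi=\int (b^\eps-b)\cdot Du^\eps\,\phi+\int b\cdot Du^\eps\,\phi\to\int b\cdot Du\,\phi$. Finally, a minor point: for $u_T$ merely continuous and unbounded, the uniform-integrability step needs some growth restriction on $u_T$; this is inherited from the statement rather than a flaw of your argument, but it should be acknowledged.
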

Although \eqref{eq:formula_u} represents a distributional solution that arises uniquely through the regularization of $b$, distributional solutions are generally not unique. In \cite{ls}, it is proved that the  function $u$ given by \eqref{eq:formula_u} can be uniquely identified as the solution of \eqref{eq:HJ} in   time measurable viscosity sense.
We define, for $(t,x,p) \in [0,T] \times \R^d \times \R^d$,
\begin{equation}\label{eq:relaxed_b}
	\underline{b}(t,x,p) = \liminf_{z\to x} \hskip -2pt_{*}  b(t,z) \cdot p \quad \text{and} \quad \overline{b}(t,x,p) = \limsup_{z \to x}\hskip -2pt^{*} b(t,z) \cdot p.
\end{equation}
For fixed $(t,x) \in [0,T] \times \R^d$, $\underline{b}(t,x,\cdot)$ and $\overline{b}(t,x,\cdot)$ are Lipschitz continuous on $\R^d$, and, for fixed $(t,p) \in [0,T]$, $\underline{b}(t,\cdot,p)$ and $\overline{b}(t,\cdot,p)$ are respectively lower semicontinous (l.s.c.) and upper semicontinuous (u.s.c.).
\begin{defn}\label{def:visc}
	An u.s.c. (resp., l.s.c.) function $u$ is called a viscosity subsolution (resp. supersolution) of \eqref{eq:HJ} if   $u(\cdot,T)\le u_T$ (resp., $\ge$) and  for all $\psi\in C^{2}( \R^d)$ 
\begin{multline*}
	-\frac{d}{dt} \max_{x \in \R^d} \left\{ u(t,x)  - \psi(x) \right\}
	\\
\le \inf\left\{ \tr[ a(t,y) D^2 \psi(y)] - \underline{b}(t,y, D \psi(y)) : y \in \arg\max\{ u(t,\cdot) - \psi(\cdot) \} \right\}
\end{multline*}
\begin{multline*}
		\Big(resp.\quad -\frac{d}{dt} \min_{x \in \R^d} \left\{ u(t,x)  - \psi(x) \right\}
	\\
\ge \sup\left\{ \tr[ a(t,y) D^2 \psi(y)] - \overline{b}(t,y, D \psi(y)) : y \in \arg\min\{ u(t,\cdot) - \psi(\cdot) \} \right\} \Big)
\end{multline*}
	in   distributional sense. 
	If $u \in C([0,T] \times \R^d)$ is both a subsolution and supersolution, we say that $u$ is a viscosity solution.
\end{defn}
We have the following Comparison Principle:
\begin{thm}\label{thm:comparison}
	If $u$ and $v$ are respectively a sub and supersolution of \eqref{eq:HJ} such that
	\[
	\sup_{(t,x) \in [0,T] \times \R^d} \frac{ u(t,x)}{1 + |x|} + \sup_{(s,y) \in [0,T] \times \R^d} \frac{ - v(s,y)}{1 + |y|} < \infty,
	\]
	then $t \mapsto \sup_{x \in \R^d} \left\{ u(t,x) - v(t,x) \right\}$ is nondecreasing.
\end{thm}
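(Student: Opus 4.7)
The plan is to carry out a doubling-of-variables argument adapted to the measurable-in-time viscosity framework of Definition \ref{def:visc}, using the (OSLC) hypothesis to control the drift and the Lipschitz bound on $\sigma$ (via an Ishii-type matrix estimate) to control the diffusion.

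For parameters $\alpha > 0$ (large) and $\eta \in (0,1]$ (small), I would introduce the doubled functional
\[
\Phi_{\alpha,\eta}(t) := \sup_{x,y \in \R^d}\Bigl\{ u(t,x) - v(t,y) - \tfrac{\alpha}{2}|x-y|^2 - \eta\,\theta(x,y)\Bigr\},
\]
where $\theta$ is a smooth growth-compensating penalty (for instance $\theta(x,y) = (1+|x|^2+|y|^2)^{\beta}$ with some $\beta > 1/2$) chosen so that, under the linear-growth assumption on $u$ and $-v$, the supremum is attained at a measurable selection $(x^*_t, y^*_t)$. Standard penalty-parameter estimates then yield $\eta\,\theta(x^*_t, y^*_t) \le M$ and $\alpha|x^*_t - y^*_t|^2 \to 0$ as $\alpha \to \infty$ along a subsequence.

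Next, I would plug the $C^2$ test function $\psi_{y^*_t}(x) := \tfrac{\alpha}{2}|x-y^*_t|^2 + \eta\,\theta(x, y^*_t)$ into the subsolution inequality for $u$, and $\tilde\psi_{x^*_t}(y) := -\tfrac{\alpha}{2}|x^*_t-y|^2 - \eta\,\theta(x^*_t, y)$ into the supersolution inequality for $v$. A measurable envelope (Danskin-type) argument combines the two distributional-in-time inequalities of Definition \ref{def:visc} into a single distributional bound in which the drift appears as $\overline{b}(t, y^*_t, p^*_t) - \underline{b}(t, x^*_t, p^*_t)$, with $p^*_t := \alpha(x^*_t-y^*_t) + O(\eta)$, and the second-order terms take a $2\times 2$-block form amenable to the Ishii matrix inequality. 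The (OSLC) condition \eqref{hyp:OSLC}, inherited through \eqref{eq:relaxed_b} from a regularization $b^\eps$, controls the drift by $C_1(t)\,\alpha\,|x^*_t - y^*_t|^2$, while the Lipschitz hypothesis \eqref{hyp:sigma} on $\sigma$ combined with Ishii's matrix estimate controls the diffusion by $\alpha\, C_2(t)^2\, |x^*_t - y^*_t|^2 + O(\eta)$.

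Altogether I would arrive at the distributional inequality
\[
-\tfrac{d}{dt}\Phi_{\alpha,\eta}(t) \le (C_1(t) + C_2(t)^2)\cdot \alpha |x^*_t - y^*_t|^2 + o_\eta(1).
\]
Integrating against nonnegative cut-offs in time, using $C_1 + C_2^2 \in L^1(0,T)$ and $\alpha|x^*_t - y^*_t|^2 \to 0$, and letting first $\alpha \to \infty$ and then $\eta \to 0$, I would conclude that $\sup_x\{u(s,x) - v(s,x)\} \le \sup_x\{u(s',x) - v(s',x)\}$ for all $s \le s'$ in $[0,T]$. The principal obstacle is the adaptation of the Crandall--Ishii matrix estimate to this time-measurable framework: since the test functions admitted by Definition \ref{def:visc} are purely spatial and uncoupled in $(x,y)$, one cannot touch $u$ and $v$ simultaneously with a coupled test function in the classical way, and must instead rely on a measurable envelope selection together with Lebesgue's differentiation theorem in order to extract the pointwise time derivative of $\Phi_{\alpha,\eta}$ at almost every $t$.
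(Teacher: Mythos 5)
A preliminary remark: the paper does not prove Theorem \ref{thm:comparison} at all. Section 2 is announced as being quoted from \cite{ls}, and this comparison principle is imported from there without argument, so there is no in-paper proof to measure yours against; I can only judge the proposal on its own merits. Your outline follows the natural doubling-of-variables strategy and correctly isolates where \eqref{hyp:OSLC} enters (the drift difference $\overline b(t,y^*_t,p^*_t)-\underline b(t,x^*_t,p^*_t)$ with $p^*_t=\alpha(x^*_t-y^*_t)$), but there is a genuine gap at the heart of the second-order part. With the uncoupled test functions you actually write down, $\psi_{y^*_t}(x)=\tfrac{\alpha}{2}|x-y^*_t|^2+\eta\,\theta(x,y^*_t)$ for $u$ and $\tilde\psi_{x^*_t}(y)=-\tfrac{\alpha}{2}|x^*_t-y|^2-\eta\,\theta(x^*_t,y)$ for $v$, the sub- and supersolution inequalities of Definition \ref{def:visc} combine to produce the diffusion contribution
\[
\tr\bigl[a(t,x^*_t)\,D^2\psi_{y^*_t}(x^*_t)\bigr]-\tr\bigl[a(t,y^*_t)\,D^2\tilde\psi_{x^*_t}(y^*_t)\bigr]
=\alpha\bigl(\tr[a(t,x^*_t)]+\tr[a(t,y^*_t)]\bigr)
\]
up to the $\eta$-penalty terms: a \emph{sum}, not a difference, of order $\alpha$, which does not tend to zero as $\alpha\to\infty$ and is not the claimed $\alpha\,C_2(t)^2|x^*_t-y^*_t|^2$. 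The bound you assert is exactly what the Crandall--Ishii lemma delivers in the classical pointwise theory, but that lemma acts on the second-order semijets of $u$ and $v$ extracted from a \emph{coupled} test function on $\R^{2d}$, whereas Definition \ref{def:visc} only admits purely spatial test functions applied separately to $u$ and to $v$ and replaces semijets by $D^2\psi$ evaluated on the argmax set; the lemma is simply not available in the form you invoke it. Closing this requires a different device --- for instance showing that $w(t,x,y)=u(t,x)-v(t,y)$ is a subsolution, in an appropriately extended sense, of a doubled equation on $\R^{2d}$ whose diffusion matrix carries the off-diagonal blocks $\tfrac12\sigma(t,x)\sigma(t,y)^T$, so that testing with $\tfrac{\alpha}{2}|x-y|^2$ yields $\tfrac{\alpha}{2}|\sigma(t,x^*_t)-\sigma(t,y^*_t)|^2\le\tfrac{\alpha}{2}C_2(t)^2|x^*_t-y^*_t|^2$ via \eqref{hyp:sigma} --- and that step is the crux of the theorem, not a technicality.

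The second unresolved point is the one you flag yourself. For each fixed spatial test function, Definition \ref{def:visc} gives only a one-sided \emph{distributional} inequality in $t$, i.e.\ $t\mapsto\max_x\{u(t,x)-\psi(x)\}+\int_t^T\gamma_\psi(s)\,ds$ is nondecreasing for some integrable correction $\gamma_\psi$; your $\Phi_{\alpha,\eta}$ is a supremum of such quantities over the moving family $\psi_{y^*_t}$. A supremum of functions that are each ``nondecreasing plus an integrable correction'' retains that property only if the corrections are dominated uniformly over the family by a single $L^1$ function, and the phrase ``measurable envelope (Danskin-type) argument'' names this difficulty rather than resolving it. You would need to state and prove that uniform-domination lemma explicitly, with the dominating function built from $C_1$, $C_2$ and the penalty estimates $\eta\,\theta(x^*_t,y^*_t)\le M$. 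With that lemma and a corrected treatment of the second-order terms, the remaining steps of your outline (attainment of the supremum, $\alpha|x^*_t-y^*_t|^2\to0$, the OSLC bound on the drift, the limits $\alpha\to\infty$ then $\eta\to0$) are standard.
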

As a consequence of the previous result, the ``good'' distributional solution of \eqref{eq:HJ} can be uniquely characterized.

\begin{cor}
	Assume $u_T \in C(\R^d)$ is uniformly continuous and $u_T \cdot (1 + |x|)^{-1} \in L^\infty(\R^d)$. Then \eqref{eq:formula_u} is the unique viscosity solution of \eqref{eq:HJ}.
\end{cor}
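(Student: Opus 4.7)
The plan is to split the argument into uniqueness---immediate from Theorem~\ref{thm:comparison}---and existence, for which I verify that $u$ from \eqref{eq:formula_u} satisfies the distributional inequalities of Definition~\ref{def:visc}. For \emph{uniqueness}, I first check the sublinear growth of $u$: since $|u_T(x)|\le C(1+|x|)$, Jensen's inequality and \eqref{eq:bd_traj} with $p=2$ give $|u(t,x)|\le C(1+(\EE|\Phi_{T,t}(x)|^{2})^{1/2})\le C(1+|x|)$. Any other viscosity solution $v$ with sublinear growth satisfies, by two applications of Theorem~\ref{thm:comparison} (to the pairs $(u,v)$ and $(v,u)$), that both $t\mapsto\sup_x(u-v)(t,x)$ and $t\mapsto\sup_x(v-u)(t,x)$ are nondecreasing; since they vanish at $t=T$, they are $\le 0$ on $[0,T]$, yielding $u\equiv v$.

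For \emph{existence}, I regularize $b$ by a family $(b^\eps)_{\eps>0}$ as in \eqref{eq:breg} and let $u^\eps$ be the classical solution of \eqref{eq:HJ} with velocity $b^\eps$. By Theorem~\ref{thm:existence_HJB}, $u^\eps\to u$ locally uniformly, so $u$ is continuous. Each $u^\eps$ is a classical viscosity solution of its regularized equation (with $\underline b^\eps=\overline b^\eps=b^\eps\cdot p$ by continuity), hence satisfies the inequalities of Definition~\ref{def:visc}. Using the sublinear growth of $u$ and a standard cut-off, the maxima $M^\eps(t):=\max_x\{u^\eps(t,x)-\psi(x)\}$ are attained for any sufficiently coercive test $\psi\in C^2(\R^d)$ and converge uniformly in $t$ to $M(t):=\max_x\{u(t,x)-\psi(x)\}$, so $\frac{d}{dt}M^\eps\to\frac{d}{dt}M$ in the sense of distributions.

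The central step is to pass to the limit in the distributional inequality
\[
-\tfrac{d}{dt}M^\eps(t)\le \tr[a(t,y^\eps(t))D^2\psi(y^\eps(t))] - b^\eps(t,y^\eps(t))\cdot D\psi(y^\eps(t))
\]
and to recover, for every $y\in\arg\max\{u(t,\cdot)-\psi(\cdot)\}$, the analogous inequality with $\underline b(t,y,D\psi(y))$. A standard quadratic perturbation of $\psi$ that turns $y$ into a strict maximizer produces maximizers $y^\eps\to y$ of the corresponding problem for $u^\eps$; the $a$-term then passes by continuity. The main obstacle is the $b^\eps$-term: since $b^\eps\to b$ only almost everywhere, $b^\eps(t,y^\eps(t))$ need not converge, and a Fatou-type argument based on the very definition \eqref{eq:relaxed_b} of the relaxed envelope is required to show
\[
\liminf_{\eps\to 0} b^\eps(t,y^\eps(t))\cdot D\psi(y^\eps(t))\ge \underline b(t,y,D\psi(y)) \quad \text{for a.e.~}t.
\]
This yields the subsolution inequality for $u$; the supersolution inequality follows symmetrically with $\overline b$. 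This is the Barles--Souganidis-type stability result, adapted to measurable-in-time velocities as announced in the introduction, and represents the main technical burden of the proof; the terminal trace $u(T,\cdot)=u_T$ is preserved by the uniform convergence of $u^\eps(T,\cdot)=u_T$.
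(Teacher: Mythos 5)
The paper does not actually prove this corollary: it is stated as an immediate consequence of Theorem~\ref{thm:comparison}, with the substantive fact --- that the function \eqref{eq:formula_u} \emph{is} a viscosity solution in the sense of Definition~\ref{def:visc} --- imported wholesale from \cite{ls}. Your decomposition (uniqueness from comparison, existence from stability under regularization) is exactly the intended one, and your uniqueness half is correct and complete: the sublinear growth of $u$ via \eqref{eq:bd_traj} and Jensen, followed by two applications of Theorem~\ref{thm:comparison} and the observation that the sup vanishes at $t=T$, is precisely how the comparison principle is meant to be used here (recall the map is nondecreasing in $t$ and the data is prescribed at $T$, so monotonicity propagates the inequality backward).

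The existence half, however, is where all the mathematical content of the corollary lives, and your sketch leaves its central step as an assertion. The claimed Fatou-type inequality $\liminf_{\eps\to 0} b^\eps(t,y^\eps(t))\cdot D\psi(y^\eps(t))\ge \underline b(t,y,D\psi(y))$ is not a routine consequence of the hypotheses in \eqref{eq:breg}: almost-everywhere convergence $b^\eps\to b$ gives no information about the values $b^\eps(t,y^\eps(t))$ along a specific moving sequence of points, which may well live in the null set where convergence fails. To make the step work you must exploit additional structure --- either that $b^\eps$ is a genuine mollification (so $b^\eps(t,z)\cdot p$ is an average of $b(t,\cdot)\cdot p$ over a shrinking ball around $z$, hence bounded below by the infimum entering the definition \eqref{eq:relaxed_b}), or the uniform one-sided Lipschitz bound, which rules out narrow deep excursions of $b^\eps\cdot p$ below the lower envelope of $b\cdot p$. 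Establishing this, uniformly enough in $t$ to pass the distributional-in-time inequality to the limit (the infimum over the maximizer set is only upper semicontinuous under perturbation, and Definition~\ref{def:visc} asks for the inequality against \emph{all} $\psi\in C^2(\R^d)$, not only coercive ones), is precisely the content of the stability theorem in \cite{ls} that the paper is citing. So your proposal identifies the right strategy but does not close the argument; as written, the existence part is a correct outline with a genuine gap at its key step.
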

\begin{rem}
	As noted in \cite[Subsection 3.1.3]{ls}, there generally exist distributional solutions to \eqref{eq:HJ} that are not viscosity solutions to the problem. The uniqueness of distributional solutions can be ensured by requiring that $b \in C([0,T] \times \mathbb{R}^d)$ satisfies the  (OSLC) condition in \eqref{hyp:OSLC} and that $u_T \in C^{0,1}(\mathbb{R}^d)$.
\end{rem}
\subsection{The conservative equation}
For the conservative equation \eqref{eq:FP}, the tendency of the backward flow to concentrate on sets of zero Lebesgue measure suggests that, even if $f_0$ is absolutely continuous with respect to the Lebesgue measure, the solution  $f(t, \cdot)$  may develop a singular component for $t$ positive. Hence, it is natural the introduce a notion of solution in the sense of measure.
\begin{defn}\label{def:dual_sol}
	A map $f \in C([0,T], \mM)$ is called a solution of \eqref{eq:FP} if, for all $t \in [0,T]$ and $g \in C_b(\R^d)$,
\[
\int_{\R^d} g(x) f(t, dx) = \int_{\R^d} \EE[g(\Phi_{t,0}(x))] f_0(dx),
\]
where $\Phi_{t,0}$ is the stochastic flow satisfying \eqref{eq:SDE}.
\end{defn}
Solutions as in the previous definition are called duality solutions because $\EE[ g  (\Phi_{t,0}(x))]$ is the solution of \eqref{eq:HJ} with terminal value $g$ at time $t$ (cfr. \cite{CFGM}). If $f_0$ is a probability measure, then $f(t,\cdot)$ is the law of the stochastic process $\Phi_{t,0}(X_0)$, where $X_0$ is a random variable with law $f_0$.
\begin{thm}\label{thm:existence_FP}
Assume that $f_0\in \mM$. Then, there exists a unique duality solution $f$ of \eqref{eq:FP}. If, for $\eps > 0$, $f^\eps$ is the solution corresponding to $b^\eps$ as in \eqref{eq:breg}, then, as $\eps \to 0$, $f^\eps$ converges weakly in the sense of measures to $f$. If $1 \le p \le \infty$, $f_0, g_0 \in \mP_p$, and $f$ and $g$ are the corresponding duality solutions, then, for some $C > 0$ depending on $p$ and the constants in \eqref{hyp:OSLC}, $\mW_p(f(t), g(t)) \le C \mW_p(f_0,g_0)$.
\end{thm}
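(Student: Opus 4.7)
The plan is to realize $f(t,\cdot)$ via the stochastic representation in Definition \ref{def:dual_sol} and to reduce each claim of the theorem to a pathwise property of $\Phi$ already controlled by Proposition \ref{prop:stoch_flow}. Since the definition itself dictates $\int g\,df(t,\cdot) = \int \EE[g(\Phi_{t,0}(x))]\,f_0(dx)$ for every $g\in C_b(\R^d)$, uniqueness is immediate: two candidate solutions induce the same linear functional on $C_b(\R^d)$ and hence coincide as measures for each $t$. To produce a solution, I would take this formula as a definition and check that for each $t$ it yields a positive Radon measure (with mass controlled locally by the moment bound \eqref{eq:bd_traj}), and that the map $t\mapsto f(t,\cdot)$ is weakly continuous: for $t_n\to t$ the pathwise H\"older-in-time estimate of Proposition \ref{prop:stoch_flow} gives $\Phi_{t_n,0}(x)\to\Phi_{t,0}(x)$ a.s., and two applications of dominated convergence (first in $\omega$, then against $f_0$) close the argument.

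For the convergence $f^\eps\to f$, note that for regular $b^\eps$ the classical Fokker--Planck theory identifies $f^\eps(t,\cdot)$ via the same duality formula with $\Phi^\eps$ in place of $\Phi$. Proposition \ref{prop:stoch_flow} provides the locally uniform a.s.\ convergence $\Phi^\eps\to\Phi$, so for $g\in C_b(\R^d)$ dominated convergence yields $\EE[g(\Phi^\eps_{t,0}(x))]\to\EE[g(\Phi_{t,0}(x))]$ pointwise in $x$, and a further dominated-convergence step against $f_0$ (using local tightness coming from \eqref{eq:bd_traj}) gives the weak convergence of the measures $f^\eps(t,\cdot)$ to $f(t,\cdot)$.

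The Wasserstein estimate I would obtain by a coupling argument. Enlarging the probability space carrying $W$, take $(X_0,Y_0)\sim\pi$ independent of $W$, where $\pi$ is an optimal $\mW_p$-coupling of $(f_0,g_0)$. The flow property of Proposition \ref{prop:stoch_flow} makes $(\Phi_{t,0}(X_0),\Phi_{t,0}(Y_0))$ a coupling of $(f(t),g(t))$, so for $p\in[1,\infty)$,
\[
\mW_p(f(t),g(t))^p \le \EE|\Phi_{t,0}(X_0)-\Phi_{t,0}(Y_0)|^p \le C\,\EE|X_0-Y_0|^p = C\,\mW_p(f_0,g_0)^p,
\]
the middle inequality being \eqref{eq:Lip_traj} applied inside the conditional expectation given $(X_0,Y_0)$ (with Jensen's inequality for $1\le p<2$). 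The case $p=\infty$ requires upgrading \eqref{eq:Lip_traj} to a pathwise Lipschitz estimate, e.g.\ via Kolmogorov's continuity criterion.

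The main obstacle I anticipate is precisely this $p=\infty$ Wasserstein bound, where the pointwise-in-$(x,y)$ $L^p$ estimate \eqref{eq:Lip_traj} is not strong enough and a pathwise Lipschitz version of the stochastic flow is needed. A secondary technical point is accommodating a generic $f_0\in\mM$ (not necessarily finite) in the existence and convergence parts: one must localize test functions to $C_c(\R^d)$ and invoke \eqref{eq:bd_traj} to keep the image mass uniformly tight on compact sets, after which the dominated-convergence arguments go through unchanged.
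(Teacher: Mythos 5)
This theorem is one of the results the paper explicitly imports from \cite{ls} without proof (``The results of this section are taken from \cite{ls}''), so there is no in-paper argument to compare against; I can only assess your proposal on its own terms. Your route --- take the duality formula of Definition \ref{def:dual_sol} as the definition of $f$ (whence uniqueness is tautological), verify positivity and weak continuity in $t$ from the moment and H\"older-in-time bounds of Proposition \ref{prop:stoch_flow}, pass $\eps\to 0$ via the a.s.\ locally uniform convergence $\Phi^\eps\to\Phi$ and dominated convergence, and prove the Wasserstein bound by pushing an optimal coupling of $(f_0,g_0)$ through the flow --- is the natural and standard one, and the finite-$p$ part is sound (the product coupling $(\Phi_{t,0}(X_0),\Phi_{t,0}(Y_0))$ with $(X_0,Y_0)\sim\pi$ independent of $W$ does couple $f(t)$ and $g(t)$, and \eqref{eq:Lip_traj} plus Jensen for $p<2$ closes the estimate).

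The one genuine gap is the $p=\infty$ case, and your proposed repair does not work as stated: Kolmogorov's continuity criterion applied to $x\mapsto\Phi_{t,0}(x)$ with the moment bounds \eqref{eq:Lip_traj} yields a.s.\ H\"older continuity of every exponent $\alpha<1$, but not a pathwise Lipschitz bound, which is what an a.s.\ coupling estimate for $\mW_\infty$ would require. The cleaner fix is to avoid pathwise regularity altogether: use $\mW_\infty(\mu,\nu)=\lim_{p\to\infty}\mW_p(\mu,\nu)$ and observe that the Gronwall-type proof of \eqref{eq:Lip_traj} gives a constant $C_p$ with $C_p^{1/p}$ bounded uniformly in $p$ (essentially $e^{\int_0^T(C_1+C_2^2 p)\,ds/p}$-type behaviour), so the finite-$p$ estimates pass to the limit. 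You should make this explicit rather than leaving it as an anticipated obstacle. A second, smaller point: your existence and $\eps\to 0$ arguments implicitly treat $f_0$ as a finite measure when invoking dominated convergence against $f_0$; since the paper's $\mM(\R^d)$ carries the topology $\sigma(\mM,C_b)$ these measures are indeed finite, so your localization caveat is unnecessary, but you should say which convention you are using rather than hedging.
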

\begin{rem}\label{rem:first_order}
Note that, in the conservative first-order case, the results presented in this section apply to the equation
 \begin{equation}\label{eq:FP_1}
 \frac{\partial f}{\partial t} + \diver(-b(t,x) f) = 0 \quad \text{in } (0,T) \times \mathbb{R}^d, \quad \text{with } f(0,\cdot) = f_0.
 \end{equation}
 therefore, consistently with the corresponding second-order problem \eqref{eq:FP} which requires the use of the  forward  flow defined by \eqref{eq:SDE}, the velocity field, also in this first-order case is $-b$. This notation choice has been made to keep all the study consistent with \cite{ls}.
\end{rem}
\section{The numerical scheme}
In this section, we  consider semi-Lagrangian  schemes for \eqref{eq:HJ} and \eqref{eq:FP} defined on an unstructured mesh. Given $N \in \NN$ we set $h:=T/N$ and $t_k:=kh$  for $k=0, \hdots, N$. Moreover, given $\Dx>0$, we consider a triangular mesh $\mT^\Dx = (T_\tau)_{\tau \in \mK^\Dx}$ such that $\bigcup_{\tau \in \mK^\Dx} T_\tau = \R^d$,    $T_\tau \cap T_\lambda = \emptyset$ if $\tau\neq \lambda$. We assume that $\sup_\tau[\text{diam}( T_\tau)]\le \Dx$ and    $\inf_\tau [\text{height}(T_\tau)] >0$. We set $\Delta=(h,\Dx)$. \\
Given the set of the vertices of the triangulation $\mV^\Dx=\{x_i\}_{i\in \mI^\Dx}$, we consider  a   $\mathbb{Q}_1$ basis  $(\beta_{i})_{i\in \mI^\Dx}$, where   $\beta_i:\R^d \to\R$  is a polynomial of degree less than or equal to $1$ and satisfies   $\beta_i(x_j)=1$ if $i=j$ and $\beta_i(x_j)=0$, otherwise. Moreover, the support $\mbox{supp}(\beta_i)$ of $\beta_i$ is compact and  
\begin{equation}\label{eq:baricentric}
	0\leq \beta_i \leq 1  \quad \forall \; i\in \mI^\Dx, \qquad\sum_{i\in  \mI^\Dx} \beta_i(x)=1 \hspace{0.3cm} \forall \; x\in \R^d.
\end{equation}
Let $B(\mV^\Dx)$ be the space of bounded functions on $\mV^\Dx$ and, for $\phi\in B(\mV^\Dx)$, let $ \phi(i)$ be its value at $x_i$. We consider the following  linear interpolation operator 
\begin{equation*} 
I[\phi](\cdot):=\sum_{i\in  \mI^{\Dx}}\beta_i(\cdot)\, \phi(i) \qquad \mbox{for } \phi \in B(\mV^\Dx) .
\end{equation*}
Given $\phi \in C_{b}(\R^{d})$, let us define $\hat{\phi} \in B(\mV^\Dx)$ by $\hat{\phi}(i):=\phi(x_i)$ for all $i \in \mI^\Dx$. If $\phi: \R^{d} \lra \R$ is Lipschitz with constant $L$, then
\begin{equation}\label{eq:Lipschitz} 
	\sup_{x\in \R^{d}}| I[\hat{\phi}](x)-\phi(x)|\le L \Dx.
\end{equation} 
Moreover,  if $\phi\in C^2(\R^d)$ with bounded second order derivatives, then
\begin{equation}\label{eq:bound_interp_2}
	\sup_{x\in \R^{d}}| I[\hat{\phi}](x)-\phi(x)|\le C\Dx^2
\end{equation}
with $C$ independent of $\Dx$.\\
We consider  a standard mollifier   $\rho \in C^\infty(\R^d)$, $\rho \ge 0$, $\supp (\rho )\subset B(0,1)$, and $\int_{\R^d} \rho dx = 1$ and we set $\rho_h = h^{-d} \rho(\cdot/h)$ where $h$ is the time step. We define 
\begin{equation}\label{eq:integrated_coefficient}
	\begin{split}
	&b^k_h(x)=\frac{1}{h}\int_{t_k}^{t_{k+1}}b_h(s,x)ds\quad\text{where}\quad b_h(s,x) = (b(s,\cdot) * \rho_h)(x),\\
	&\sigma^k_{h,\ell}(x)=\frac{1}{h}\int_{t_k}^{t_{k+1}}\sigma_\ell(s,x)ds, \quad \ell=1,\dots,r.
		\end{split}
\end{equation} 
We approximate the stochastic flow \eqref{eq:SDE} by   a  discrete-time and countably-state space Markov chain $\{\Phi^\D_{k,m}(x_s)\; ;\; k=m, \hdots, N\}$ with state space $\mV^\Dx$  and transition probabilities
 \begin{equation}\label{eq:markov_chain}
 	\begin{split}
 &p_{ji}^{k}:= \PP\left( \Phi^\D_{k+1,m}(x_s)=x_i \; \big| \; \Phi^\D_{k,m}(x_s)=x_j\right) \\ &\qquad=\frac{1}{2r}\sum\limits_{\ell=1}^{r}\left[\beta_{i} ( \X^{k}_{\ell,+}(x_j))+\beta_{i}(\X^{k}_{\ell,-}(x_j))\right], \quad \; \forall \; i, j \in \mI^\Dx,\\
 &\PP\left( \Phi^\D_{m,m}(x_s)=x_j\right)=\delta_{s,j},
 	\end{split}
 \end{equation}
where, for $x\in\R^d$, $ k=0, \hdots, N-1$ and  $\ell=1,\hdots,r$
\begin{equation}\label{eq:discrete_traject}
	\begin{array}{rcl}
		\X^{k}_{\ell,+}(x) &:=& x - h b_h^k(x)+ \sqrt{r h }\sigma^k_{h,\ell}(x), \\[4pt]
		\X^{k}_{\ell,-}(x) &: =& x - h b_h^k(x)-\sqrt{r h }\sigma^k_{h,\ell}(x).
	\end{array}
\end{equation}
\begin{rem}
The previous approximation is motivated by the following construction. Given   the equation  \eqref{eq:SDE}, we approximate
$$
\Phi_{t+h,t}(x)=\Phi_{t ,t}(x)- \int_{t}^{t+h} b(s,\Phi_{s ,t}(x) ) d s + \int_{t}^{t+h}\sigma(s,\Phi_{s ,t}(x)) d W_s
$$	
by the $r$-dimensional random walk with $N$ time steps   
	\begin{equation}
		 \label{eq:Euler}
		 \begin{split}
	\tilde\Phi^h_{t_{k+1},t_k}(x_i)& = \tilde\Phi^h_{t_k,t_k}(x_i)- h \int_{t_k}^{t_{k+1}}b(s,\tilde\Phi^h_{t_k,t_k}(x_i))ds+ \sqrt{rh} \int_{t_k}^{t_{k+1}}\sigma(s,\tilde\Phi^h_{t_k,t_k}(x_i))Zds\\
		 &=x_i -h b^k_h(x_i) + \sqrt{rh} \sigma^k_h(x_i)Z,
		  \end{split}
	\end{equation}
where $b^k_h$, $\sigma^k_h$ defined as in \eqref{eq:integrated_coefficient} and $Z=(Z_1,\dots,Z_r)$ is a  $r$-valued random variable  satisfying   for all $\ell=1, \hdots, r$, 
\begin{equation}\label{eq:prob_walk}
\mathbb{P}( Z_{\ell}=1)= \mathbb{P}(Z_{\ell}=-1)= \frac{1}{2r} \hspace{0.5cm} \mbox{and } \hspace{0.5cm} \mathbb{P}\left( \bigcup_{1 \leq \ell_1 < \ell_2 \leq r} \{ Z_{\ell_1} \neq 0 \} \cap  \{ Z_{\ell_2} \neq 0 \} \right)=0. 
\end{equation}
If $\tilde\Phi^h_{t_{k+1},t_k}(x_i)$ is not a point of the grid, we interpolate the value by means of the basis $\beta_i$ to get the Markov chain \eqref{eq:markov_chain}. For more details, we refer to
\cite{cf,cs2}.
\end{rem}

\subsection{The numerical scheme for the viscous transport equation}
For a function $u:\{0,\dots,N\}\times B(\Ver)\lra \R$, $u_k(i)$ denotes its value at time step $k$ and grid point $x_i$. We consider the following backward explicit scheme for \eqref{eq:HJ} 
\begin{equation}\label{eq:scheme_HJ}
	\begin{cases}
		u^\D_{k}(i)=S^\D (u^\D_{k+1},i,k)    & \mbox{for all } i \in  \mI^\D,   \;  k=0, \hdots, N-1,\\[6pt]
		u^\D_{N}(i)= u_T(x_{i}), &  \mbox{for all } i \in  \mI^\D,
	\end{cases}
\end{equation}
where $S^\D : B(\Ver)\times \mI^\Dx \times \{0,\hdots, N-1\} \lra \R$ is defined as
\begin{equation}
\begin{array}{rl}
S^\D(w,i,k):=&   \frac{1}{2r} \sum_{\ell=1}^{r} \left(I [w](\X^{k}_{\ell,+}(x_i))+  I[w](\X^{k}_{\ell,-}(x_i) )\right) \\
=&\sum_{j\in  \mI^\Dx}\frac{1}{2r} \sum_{\ell=1}^{r} \left(\beta_j(\X^{k}_{\ell,+}(x_i))+  \beta_j(\X^{k}_{\ell,-}(x_i) )\right)w_j.
\end{array}
\end{equation}
and  $\X^{k}_{\ell,\pm}(x_i)$ defined in \eqref{eq:discrete_traject}. Recalling \eqref{eq:markov_chain}, the   scheme \eqref{eq:scheme_HJ} can be rewritten as
\begin{equation}\label{eq:scheme_HJ_bis}
u^\D_{k}(i)= \sum_{j\in I^\D}p^{k}_{ij}u^\D_{k+1}(j).
\end{equation}

\subsection{The numerical scheme for the conservative equation}
Now we describe the derivation of the numerical scheme for   \eqref{eq:FP}.
Given the initial measure $f_0$, to each simplex $T_\tau \in \mT^\Dx$, we associate the barycenter $B_\tau^\D$ and we define
$f_\tau^B=\int_{T_\tau} f_0(dx) $. We define  
\begin{equation}\label{eq:approx_init_meas}
	f^\Delta_{0}=\sum_{i}f^\Delta_{0}(i)\delta_{x_i}\quad \hbox{where}\quad
	f^\Delta_{0}(i)=\sum_{\tau\in \mK^\Dx}\beta_i(B_\tau^\D)f_\tau^B,
\end{equation}
i.e., the value $ f^\Delta_{0}(i)$ is calculated as the sum of the weights $ f_\tau^B $, each scaled by the barycentric coordinate $ \beta_i(B_\tau^\D) $, for all triangles in which $ x_i $ is a vertex.
We have by \eqref{eq:baricentric}
\begin{equation}\label{eq:approx_init_meas2}
	\sum_{i\in\mI^\Dx}f^\Delta_{0}(i)=\sum_{i\in\mI^\Dx}\sum_{\tau\in \mK^\Dx}\beta_i(B_\tau^\D)f_\tau^B=\sum_{\tau\in \mK^\Dx}f_\tau^B=\int_{\Rd}   f_0(dx).
\end{equation}
Given $f^\Delta_{0}(i)$, we consider the following explicit scheme  
 \begin{equation}\label{eq:scheme_FP}
	f^\Delta_{k+1}(i)= \frac{1}{2r}\sum\limits_{\ell=1}^{r}\sum\limits_{j \in \mI^\Dx} \left[\beta_{i} ( \X^{k}_{\ell,+}(x_j))+\beta_{i}(\X^{k}_{\ell,-}(x_j))\right] f^\Delta_{k}(j)  \hspace{0.4cm} \forall \; i\in \mI^\Dx, \; \; k=0,\hdots, N-1.
\end{equation}
The measure in $\mP_1(\R^d)$ defined by
\begin{equation}\label{eq:approx_measure}
	f_k^\D=\sum_{i\in\mI^\Dx}  f^\Delta_{k}(i)\delta_{x_i} \qquad k=0,\dots,N,
\end{equation}
gives  an approximation  of the   measure $f(kh,dx)$, solution to the equation \eqref{eq:FP}. To explain the scheme \eqref{eq:scheme_FP}, let us assume that, at a given step $ k $, we have computed the  measure $ f_k^\Dx $ on the discrete set $ \mV^\Dx $. At the next time step, for each vertex $ x_i $, we follow the characteristic path using the Euler scheme \eqref{eq:discrete_traject}. The mass $ f_{k}(i) $ is then distributed linearly among the vertices of the triangles $ T_\tau $ that are reached by the trajectories $ \X_{k}^{\ell,\pm}(x_i) $. The updated measure $ f_{k+1}^\Dx $ at a vertex $ x_j   $ is determined by the sum of the mass transported along the characteristic paths that reach   one of the  triangles whose the point is a vertex, with the distribution based on the barycentric coordinates of the terminal point of the characteristics. By \eqref{eq:markov_chain}, the scheme \eqref{eq:scheme_FP} can be also rewritten as
	\begin{equation}\label{eq:scheme_FP_bis}
		f^\Delta_{k+1}(i)= \sum_{j\in I^\D}p^{k}_{ji}f^\Delta_{k}(j).
	\end{equation}

\section{Convergence of the scheme for the viscous transport equation}
We aim to prove the convergence of the scheme  \eqref{eq:scheme_HJ}  to the ``good''   solution of   equation \eqref{eq:HJ}. In the following, we replace the sublinear growth conditions in \eqref{hyp:OSLC} and
\eqref{hyp:sigma} with the stronger assumptions
  \begin{align}
  	&\sup_{x \in \R^d}  |b(t,x)|\le C_0(t)\label{hyp:b_bounded},\\
  	&\sup_{x \in \R^d}  |\sigma(t,x)| \le C_2(t) \label{hyp:sigma_bounded},
  \end{align}
 with $C_0, C_2\in L^\infty(0,T)$, i.e. $b$, $\sigma\in L^\infty([0,T], L^\infty(\R^d))$.  Note that  a similar assumption concerning \( b \) is also made in \cite{dlv, dl}.  
\par
To prove the convergence of the scheme, we use an intermediate, semi-discrete in time scheme. On one hand, for the solution of this scheme, it is possible to demonstrate certain properties similar to those satisfied by the continuous problem, and thus prove its convergence through a stability argument; in this part, the property \eqref{hyp:OSLC} plays a key role. On the other hand, the regularity of the solution to the semi-discrete scheme allows us to estimate, in the $ L^\infty $-norm, its distance from the solution of \eqref{eq:scheme_HJ}.
\par
We consider the semi-discrete scheme
\begin{equation}\label{eq:scheme_HJ_semi}
	\begin{cases}
		u^h_{k}(x)=S^h (u^h_{k+1}(\cdot),x,k)    & \mbox{for all } x\in\R^d,   \;  k=0, \hdots, N-1,\\[4pt]
		u^h_{N}(x)= u_T(x), &  \mbox{for all } x\in\R^d,
	\end{cases}
\end{equation}
where $S^h : L^\infty(\R^d)\times \R^d \times \{0,\hdots, N-1\} \lra \R$ is defined as
\begin{equation}\label{eq:op_discreto}
	\begin{array}{rl}
		S^h(w,x,k)&:= \frac{1}{2r} \sum_{\ell=1}^{r} \left(w(\X^{k}_{\ell,+}(x))+  w(\X^{k}_{\ell,-}(x) )\right)\\
		 &=\frac{1}{2r} \sum_{\ell=1}^{r} \big[w(x  - h b_h^k(x )+ \sqrt{r h }\sigma^k_{h,\ell}(x ) ) 
		+w(x  - h b_h^k(x )-\sqrt{r h }\sigma^k_{h,\ell}(x ) )\big].
	\end{array}
\end{equation}
Define a Markov chain by
\begin{equation} \label{eq:Euler_chain}
	\begin{cases}
		\Phi^h_{m+1,k}(x)= \Phi^h_{m,k}(x)- h b^m_h(\Phi^h_{m,k}(x)) + \sqrt{rh} \sigma^m_h(\Phi^h_{m,k}(x))Z^m  & m=k+1,\dots, N-1\\
		 \Phi^h_{k,k}(x)=x,
	\end{cases}
\end{equation}
where $Z^m$ is a sequence of $r$-valued random variables  independent of $\Phi^h_{m,k}(x)$  and  satisfying \eqref{eq:prob_walk}, while $b_h^m$, $\sigma_h^m$ are defined as in \eqref{eq:integrated_coefficient} with $m$ in place of $k$. Note that $\Phi^h_{k+1,k}(x)=\tilde \Phi^h_{t_{k+1},t_k}(x)$
where $\tilde \Phi^h$ is defined in \eqref{eq:Euler}.

\begin{prop}\label{prop:exist_semidiscrete_HJB}
Assume that $u_T\in C_b(\R^d)$. For $h>0$, 	let $u^h=\{u^h_k\}_{k=0}^N$  be the solution of \eqref{eq:scheme_HJ_semi}. Then
\begin{equation}\label{eq:formula_rapp_discr}
	u^h_k(x)=\EE[u_T(\Phi^h_{N,k}(x))],\quad x\in\R^d,\ k=0,\dots,N 
\end{equation}
and  $u^h_k(x)$ is bounded and continuous in $\R^d$ for any $k=0,\dots,N$. Moreover, if $u_T$ is Lipschitz continuous, then $u^h$ is also Lipschitz continuous in $x$ with a constant $L$ independent of $h$.
\end{prop}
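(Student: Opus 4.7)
I would prove the three assertions in order: first the stochastic representation \eqref{eq:formula_rapp_discr}, then boundedness and continuity in $x$, and finally the uniform Lipschitz estimate. Throughout, the key observation is that the one-step map of the scheme \eqref{eq:scheme_HJ_semi} is exactly the expectation of the one-step transition of the Markov chain \eqref{eq:Euler_chain}, thanks to the law of $Z^m$ in \eqref{eq:prob_walk}.

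\textbf{Representation formula.} I would proceed by backward induction on $k$. For $k=N$ the identity is trivial since $\Phi^h_{N,N}(x)=x$. Assuming the formula at step $k+1$, a direct inspection of \eqref{eq:op_discreto} shows that
\[
S^h(u^h_{k+1},x,k)=\frac{1}{2r}\sum_{\ell=1}^{r}\bigl[u^h_{k+1}(\X^{k}_{\ell,+}(x))+u^h_{k+1}(\X^{k}_{\ell,-}(x))\bigr]=\EE\bigl[u^h_{k+1}(\Phi^h_{k+1,k}(x))\bigr],
\]
because $Z^k$ takes the values $\pm e_\ell$ with probability $1/(2r)$. Combining this with the inductive hypothesis, the tower property and the semigroup identity $\Phi^h_{N,k}=\Phi^h_{N,k+1}\circ\Phi^h_{k+1,k}$ (valid since the $(Z^m)_m$ are independent) yields \eqref{eq:formula_rapp_discr}.

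\textbf{Boundedness and continuity.} Boundedness is immediate from the representation: $|u^h_k(x)|\le \EE|u_T(\Phi^h_{N,k}(x))|\le \|u_T\|_\infty$. For continuity I would again use backward induction: the mollified coefficient $b_h^k$ is $C^\infty$ in $x$ by construction, $\sigma^k_{h,\ell}$ is Lipschitz by \eqref{hyp:sigma}, hence the maps $x\mapsto \X^k_{\ell,\pm}(x)$ are continuous, and $u^h_k$ is a finite convex combination of continuous functions composed with these maps.

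\textbf{Lipschitz estimate.} If $u_T$ is Lipschitz with constant $L_T$, the representation gives
\[
|u^h_k(x)-u^h_k(y)|\le L_T\,\EE\bigl|\Phi^h_{N,k}(x)-\Phi^h_{N,k}(y)\bigr|,
\]
so it suffices to bound the right-hand side by $C|x-y|$ with $C$ depending only on $T$ and the constants in \eqref{hyp:OSLC}--\eqref{hyp:sigma_bounded}. Coupling the two trajectories with the \emph{same} sequence $(Z^m)$ and setting $\eta_m:=\Phi^h_{m,k}(x)-\Phi^h_{m,k}(y)$, $B_m:=b_h^m(\Phi^h_{m,k}(x))-b_h^m(\Phi^h_{m,k}(y))$, $\Sigma_m:=\sigma^m_h(\Phi^h_{m,k}(x))-\sigma^m_h(\Phi^h_{m,k}(y))$, one gets $\eta_{m+1}=\eta_m-hB_m+\sqrt{rh}\,\Sigma_m Z^m$. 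Conditioning on $\mathcal{F}_m$, using $\EE[Z^m]=0$ and $\EE[|\Sigma_m Z^m|^2\mid\mathcal{F}_m]=|\Sigma_m|_F^2/r$, one obtains
\[
\EE[|\eta_{m+1}|^2\mid\mathcal{F}_m]=|\eta_m-hB_m|^2+h\,|\Sigma_m|_F^2/r.
\]
The mollified drift $b_h^m$ inherits (OSLC) from $b$ since convolution with a non-negative kernel preserves the sign condition, so $\eta_m\cdot B_m\ge -\bar C_1^m|\eta_m|^2$ with $\bar C_1^m=h^{-1}\!\int_{t_m}^{t_{m+1}}C_1(s)ds$; together with Lipschitz continuity of $\sigma_h^m$ (from \eqref{hyp:sigma}) this gives a one-step inequality
\[
\EE[|\eta_{m+1}|^2\mid\mathcal{F}_m]\le(1+h\lambda_m)|\eta_m|^2,
\]
where $\lambda_m$ depends only on $\bar C_1^m$, $\bar C_2^m$ and $\|b\|_\infty$. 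Iterating and invoking discrete Gronwall yields $\EE|\eta_N|^2\le C|x-y|^2$ with $C=C(T,C_0,C_1,C_2)$ independent of $h$, and the Lipschitz bound on $u^h_k$ follows via $\EE|\eta_N|\le (\EE|\eta_N|^2)^{1/2}$.

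\textbf{Main obstacle.} The delicate point is deriving the one-step inequality with a factor $(1+h\lambda_m)$ that is \emph{uniform in $h$}, because the expansion of $|\eta_m-hB_m|^2$ produces the term $h^2|B_m|^2$. Since $b$ is only (OSLC) and bounded, the Lipschitz constant of the mollified drift $b_h^m$ blows up like $1/h$, so the naive bound $h^2|B_m|^2\le h^2L_h^2|\eta_m|^2$ fails. The argument must combine the OSLC-produced cancellation $-2h\eta_m\cdot B_m\le 2h\bar C_1^m|\eta_m|^2$ with the uniform bound $|B_m|\le 2\|b\|_\infty$ in a way that absorbs the residual into the Gronwall iteration without degrading the constant as $h\to 0$.
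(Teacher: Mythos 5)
Your route coincides with the paper's in all essentials: backward induction plus the tower property for \eqref{eq:formula_rapp_discr}, the $L^\infty$ bound read off from the representation, and, for the Lipschitz estimate, a coupling of the two chains through the same noise $(Z^m)$, expansion of $\EE|\eta_{m+1}|^2$, cancellation of the cross terms via $\EE[Z^m]=0$ and independence, the (OSLC) inequality (which, as you correctly note, is preserved under convolution with $\rho_h$) for $-2h\,\EE[\eta_m\cdot B_m]$, the Lipschitz bound \eqref{hyp:sigma} for the diffusion increment, and a discrete Gronwall iteration giving $(1+Ch)^{N-k}$. Your continuity argument (backward induction, composition with the continuous maps $\X^k_{\ell,\pm}$) is more elementary than the paper's, which instead derives continuity from the stability estimate \eqref{eq:dp_discrete2} that it needs anyway; both are fine for fixed $h$.

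The gap is exactly where you place it, and you do not close it. The term $h^2\,\EE|B_m|^2$ must be absorbed as an $O(h)\,\EE|\eta_m|^2$ contribution for the $(1+Ch)$ one-step inequality to survive $N\sim T/h$ iterations; saying that the argument ``must combine'' the OSLC cancellation with $|B_m|\le 2\|b\|_\infty$ is a statement of intent, not an estimate. Indeed, the interpolation $h^2|B_m|^2\le h^2\cdot 2\|b\|_\infty\cdot \operatorname{Lip}(b_h^m)\,|\eta_m|$ followed by Young's inequality only produces an additive remainder that, summed over $N$ steps, does not vanish relative to $|x-y|^2$, so it cannot yield \eqref{eq:dp_discrete2}, and with it the $h$-uniform Lipschitz constant $L$ in \eqref{eq:stima_lip}. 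The paper disposes of this term by a direct computation: it writes $B_m=\frac1h\int_{t_m}^{t_{m+1}}\int_{\R^d} b(s,y)\big(\rho_h(\Phi^h_{m,k}-y)-\rho_h(\Psi^h_{m,k}-y)\big)\,dy\,ds$ and bounds the inner difference using $\|D\rho_h\|_\infty$ integrated over the ball $B(0,h)$ carrying the mollifier, together with \eqref{hyp:b_bounded}, asserting the conclusion $h^2\EE|B_m|^2\le h\,\EE|\eta_m|^2\big(\frac1h\int_{t_m}^{t_{m+1}}C_0(s)\,ds\big)^2$. Your own observation that $\operatorname{Lip}(b_h^m)\lesssim h^{-1}$ — so that the crude bound gives $h^2|B_m|^2\lesssim|\eta_m|^2$ with no factor of $h$ — shows you have correctly located the single step that carries all the difficulty and that the gain must come from the scaling of the mollifier rather than from a Lipschitz bound alone; but locating the obstacle is not overcoming it, and until this one estimate is supplied your proof of the uniform Lipschitz property (and hence of the last assertion of the Proposition) is incomplete. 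A minor additional slip: $rh\,\EE[|\Sigma_m Z^m|^2\mid\mathcal F_m]$ equals $h$ times the squared Frobenius norm of $\Sigma_m$, not that quantity divided by $r$; this does not affect the argument.
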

\begin{proof}
	We first prove  \eqref{eq:formula_rapp_discr}. The identity is obvious for $k=N$ since $\Phi^h_{N,N}(x)=x$. For $k=N-1$, we have
	\begin{align*}
		&\EE[u_T(\Phi^h_{N,N-1}(x))]\\
		&=\EE[u_T(\Phi^h_{N-1,N-1}(x)- h b^{N-1}_h(\Phi^h_{N-1,N-1}(x)) + \sqrt{rh} \sigma^{N-1}_h(\Phi^h_{N-1,N-1}(x))Z^{N-1})]\\
	   &	=\EE[u_T(x- h b^{N-1}_h(x) + \sqrt{rh} \sigma^{N-1}_h(x)Z^{N-1})]=\frac{1}{2r} \sum_{\ell=1}^{r} \left(u_T(\X^{k}_{\ell,+}(x))+  u_T(\X^{k}_{\ell,-}(x) )\right)\\
	   &=S^h(u_N,x,N-1)=u^h_{N-1}(x) 
	\end{align*}
and therefore  \eqref{eq:formula_rapp_discr} for $k=N-1$. For $k=N-2$, observing that $\Phi^h_{N,N-2}=\Phi^h_{N,N-1}\circ \Phi^h_{N-1,N-2}$ and since \eqref{eq:formula_rapp_discr} holds for $k=N-1$, we have 
\begin{align*}
&\EE[u_T(\Phi^h_{N,N-2}(x))]=\EE[u_T(\Phi^h_{N,N-1}(\Phi^h_{N-1,N-2}(x)))]\\
&=\EE[u^h_{N-1}( \Phi^h_{N-1,N-2}(x))]=\EE[u^h_{N-1}( x- h b^{N-2}_h(x) + \sqrt{rh} \sigma^{N-2}_h(x)Z^{N-2})]\\
&=\frac{1}{2r} \sum_{\ell=1}^{r} \big[u^h_{N-1}(x  - h b_h^{N-2}(x )+ \sqrt{r h }\sigma^{N-2}_{h,\ell}(x ) ) 
+u^h_{N-1}(x  - h b_h^{N-2}(x )-\sqrt{r h }\sigma^{N-2}_{h,\ell}(x ) )\big]\\
&=S^h(u_{N-1},x,N-2)= u^h_{N-2}(x),
\end{align*}
which gives \eqref{eq:formula_rapp_discr} for $k=N-2$. Iterating,   we get \eqref{eq:formula_rapp_discr} for $k=0,\dots, N-1$.\\
 By the assumption on $u_T$ and \eqref{eq:formula_rapp_discr}, it follows immediately that $\sup_{k=0,\dots,N}\|u^h_k\|_{L^\infty}\le \|u_T\|_{L^\infty}$. To prove the  continuity of $u^h_k$, $k=0,\dots,N$, we first give a stability estimate for the discrete trajectories \eqref{eq:Euler_chain}. Given $x,y\in\R^d$, we denote by $\Phi^h_{m,k}$, $\Psi^h_{m,k}$ the discrete trajectories such that $\Phi^h_{k,k}=x$, $\Psi^h_{k,k}=y$. We have
\begin{equation}\label{eq:dp_discrete1}
	\begin{split}
	&\EE|\Phi^h_{m+1,k}-\Psi^h_{m+1,k}|^2	=\EE|\Phi^h_{m,k}-\Psi^h_{m,k}|^2+h^2   \EE|b_h^m(\Phi^h_{m,k})-b_h^m(\Psi^h_{m,k})|^2\\
	&+hr\EE|(\sigma_h^m(\Phi^h_{m,k})-\sigma_h^m(\Psi^h_{m,k}))Z^m|^2-2h \EE\big[ (\Phi^h_{m,k}-\Psi^h_{m,k})(b_h^m(\Phi^h_{m,k})-b_h^m(\Psi^h_{m,k}))\big]\\
	&+2\sqrt{hr}\,\EE\big[(\Phi^h_{m,k}-\Psi^h_{m,k})(\sigma_h^m(\Phi^h_{m,k})-\sigma_h^m(\Psi^h_{m,k}))Z^m\big]\\
	&+2\sqrt{h^3r}\,\EE\big[(b_h^m(\Phi^h_{m,k})-b_h^m(\Psi^h_{m,k}))(\sigma_h^m(\Phi^h_{m,k})-\sigma_h^m(\Psi^h_{m,k}))Z^m\big].
	\end{split}
\end{equation}
By \eqref{hyp:b_bounded}, we estimate 
\begin{align*}
h^2\EE&\left|b_h^m(\Phi^h_{m,k})-b_h^m(\Psi^h_{m,k})\right|^2=h^2\EE\left|\frac{1}{h}\int_{t_m}^{t_{m+1}}\int_{\R^d}b(s,y)(\rho_h(\Phi^h_{m,k}-y)-\rho_h(\Psi^h_{m,k}-y))dyds\right|^2\\
&\le h^2\EE\left|\frac{1}{h}\int_{t_m}^{t_{m+1}}|b(s,y)|\left(\int_{B(0, h)}\|D\rho_h\|_{\infty}dy\right)|\Phi^h_{m,k}- \Psi^h_{m,k}|ds\right|^2\\
&\le h^2\EE\left|\frac{1}{h}\int_{t_m}^{t_{m+1}} C_0(s)|\Phi^h_{m,k}- \Psi^h_{m,k}|ds\right|^2\le h\EE|\Phi^h_{m,k}-\Psi^h_{m,k}|^2 \left( \frac{1}{h}\int_{t_m}^{t_{m+1}}C_0(s)ds\right)^2.
\end{align*}
By \eqref{hyp:sigma} and \eqref{hyp:sigma_bounded}, since $Z^m$ independent of $\Phi^h_{m,k}-\Psi^h_{m,k}$, we estimate
\begin{align*}
hr\EE&|(\sigma^m(\Phi^h_{m,k})-\sigma^m(\Psi^h_{m,k}))Z^m|^2=hr
\EE\left|\frac{1}{h}\int_{t_m}^{t_{m+1}}(\sigma(s,\Phi^h_{m,k})-\sigma(s,\Psi^h_{m,k}))Z^mds\right|^2\\
&\le hr\EE|\Phi^h_{m,k}-\Psi^h_{m,k}|^2\left|\frac{1}{h}\int_{t_m}^{t_{m+1}}C_2(s)ds\right|^2\EE[ |Z^m| ^2].
\end{align*}
By \eqref{hyp:OSLC}, it follows that
\begin{align*}
	- 2h \EE(\Phi^h_{m,k}-\Psi^h_{m,k})(b_h^m(\Phi^h_{m,k})-b_h^m(\Psi^h_{m,k}))\le 2h \EE|\Phi^h_{m,k}-\Psi^h_{m,k}|^2 \left(\frac{1}{h}\int_{t_m}^{t_{m+1}}C_1(s)ds\right).
\end{align*}
Since $Z^m$ is independent of $\Phi^h_{m,k}-\Psi^h_{m,k}$ and $\EE[Z^m]=0$, the last two terms on the right-hand side of the identity \eqref{eq:dp_discrete1} vanish. Hence, replacing the previous estimates in \eqref{eq:dp_discrete1}, we finally get the stability estimate
\[
\EE|\Phi^h_{m+1,k}-\Psi^h_{m+1,k}|^2\le (1+Ch)\EE|\Phi^h_{m,k}-\Psi^h_{m,k}|^2
\]
and, iterating on $m$,
\begin{equation}\label{eq:dp_discrete2}
	\EE|\Phi^h_{m+1,k}-\Psi^h_{m+1,k}|^2\le (1+Ch)^{m+1-k}|x-y|^2, \qquad m=k-1,\dots, N-1
\end{equation}
for $C$ depending only on the constants in assumptions \eqref{hyp:OSLC}, \eqref{hyp:sigma},  \eqref{hyp:b_bounded} and \eqref{hyp:sigma_bounded}.
\par
We also estimate the dependence of $\Phi^h_{m,k}$ with respect to $m$. Given $m\in \{k,\dots,N\}$, we have
\begin{align*}
&\EE|\Phi^h_{m+1,k}(x)-\Phi^h_{m,k}(x)|^2=h^2\EE|b_h^m(\Phi^h_{m,k})|^2+hr\EE|\sigma^m(\Phi^h_{m,k})Z^m|^2\\
&+2h\sqrt{hr}\EE[b_h^m(\Phi^h_{m,k})\sigma^m(\Phi^h_{m,k})Z^m]=h^2\EE\left|\frac{1}{h}\int_{t_m}^{t_{m+1}}\int_{\R^d}b(s,y)\rho_h(\Phi^h_{m,k}-y)dyds\right|^2\\
&+hr\EE\left|\frac{1}{h}\int_{t_m}^{t_{m+1}}\sigma(s,\Phi^h_{m,k})Z^mds\right|^2\le h^2 \left|\frac{1}{h}\int_{t_m}^{t_{m+1}} C_0(s)ds\right|^2+hr \left[\frac{1}{h}\int_{t_m}^{t_{m+1}}|C_2(s)|^2ds \right]
\le Ch
\end{align*}
with $C$ independent of $h$. In similar way, we estimate
\begin{equation}\label{eq:dp_discrete3}
	\EE|\Phi^h_{m+\bar m,k}(x)-\Phi^h_{m,k}(x)|^2\le C\bar m h,\qquad m=k,\dots, N,\ \ 0\le \bar m\le N-m.
\end{equation}
The estimate \eqref{eq:dp_discrete2} and the representation formula \eqref{eq:formula_rapp_discr} give immediately the continuity of $u^h_k$ for any $k=0,\dots,N$. Moreover, if $u_T$ is Lipschitz continuous with constant $L_T$, then, given $x,y\in\R^d$, we have by \eqref{eq:dp_discrete2}
\begin{equation}\label{eq:stima_lip}
	\begin{split}
	|u^h_k(x)-u^h_k(y)|&\le \EE|u_T(\Phi^h_{N,k}(x))-u_T(\Phi^h_{N,k}(y))|\le
	L_T[\EE| \Phi^h_{N,k}(x) - \Phi^h_{N,k}(y) |^2]^\half\\
	&\le L_Te^{C(N-k)h}|x-y|\le L |x-y|
	\end{split}
\end{equation}
with $L$ independent of $h$. Hence the uniform Lipschitz property for $u^h_k$, $k=0,\dots,N$ follows.
\end{proof}
We now estimate the distance between the solutions of \eqref{eq:scheme_HJ} and \eqref{eq:scheme_HJ_semi}.
\begin{prop}\label{prop:semi-to-fully}
Assume that $u_T$ is   Lipschitz continuous. Let  $\bar u^{\D}$  be the linear interpolation  of the solution of \eqref{eq:scheme_HJ} on the triangulation $\mT^\Dx$, i.e. $\bar u^{\D}_k(x)=I[u^\D_k](x)$,  and  $u^h$   the solution of \eqref{eq:scheme_HJ_semi}. Then
\begin{equation}\label{eq:est_discr_fullydiscr}
\|u^h_{k}-\bar u^{\D}_k\|_{L^\infty{(\R^d)}}\le L(N-k+1)\Dx,\qquad k=0,\dots,N,
\end{equation}
where $L$ is as in Prop. \ref{prop:exist_semidiscrete_HJB}.
\end{prop}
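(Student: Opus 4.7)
The plan is to prove the estimate by backward induction on $k$, starting from $k = N$ and going down to $k = 0$. At each step, I will split the error into two parts: one controlled by the interpolation error on the Lipschitz semi-discrete solution $u^h_k$, and another controlled inductively by the error already committed at step $k+1$.

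\textbf{Base case ($k = N$).} Here $u^h_N(x) = u_T(x)$ while $\bar u^\Delta_N(x) = I[\widehat{u_T}](x)$. Since $u_T$ is Lipschitz, and since any Lipschitz constant of $u_T$ is automatically $\le L$ (because $u^h_N = u_T$ and $u^h$ is $L$-Lipschitz by Proposition \ref{prop:exist_semidiscrete_HJB}), the interpolation estimate \eqref{eq:Lipschitz} gives $\|u^h_N - \bar u^\Delta_N\|_{L^\infty} \le L\Delta x$, which matches the claim with $k=N$.

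\textbf{Inductive step.} Assume $\|u^h_{k+1} - \bar u^\Delta_{k+1}\|_{L^\infty} \le L(N-k)\Delta x$. First I would compare the two schemes at a grid point $x_i$. Since $S^\Delta$ evaluates the basis expansion $I[u^\Delta_{k+1}]$, which by definition equals $\bar u^\Delta_{k+1}$, at the points $\X^k_{\ell,\pm}(x_i)$, while $S^h$ evaluates $u^h_{k+1}$ at exactly the same points, the identities \eqref{eq:scheme_HJ} and \eqref{eq:scheme_HJ_semi} together with the convexity weights $\frac{1}{2r}$ yield
\begin{equation*}
|u^h_k(x_i) - u^\Delta_k(i)| \le \|u^h_{k+1} - \bar u^\Delta_{k+1}\|_{L^\infty} \le L(N-k)\Delta x.
\end{equation*}

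\textbf{Extension to arbitrary $x$.} For a general $x \in \R^d$, write
\begin{equation*}
u^h_k(x) - \bar u^\Delta_k(x) = \bigl( u^h_k(x) - I[\widehat{u^h_k}](x) \bigr) + \bigl( I[\widehat{u^h_k}](x) - I[u^\Delta_k](x) \bigr).
\end{equation*}
The first difference is bounded by $L\Delta x$ by the interpolation estimate \eqref{eq:Lipschitz}, using the uniform Lipschitz constant $L$ of $u^h_k$ from Proposition \ref{prop:exist_semidiscrete_HJB}. The second difference equals $\sum_i \beta_i(x)\bigl(u^h_k(x_i) - u^\Delta_k(i)\bigr)$; by the previous grid-point estimate and the convexity property \eqref{eq:baricentric} of the basis, this is bounded by $L(N-k)\Delta x$. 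Adding the two contributions closes the induction: $|u^h_k(x) - \bar u^\Delta_k(x)| \le L(N-k+1)\Delta x$.

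\textbf{Main obstacle.} The argument is not computationally difficult, but the subtlety is that at non-grid points the fully discrete object is the \emph{interpolant} $\bar u^\Delta_k = I[u^\Delta_k]$, while $S^\Delta$ itself acts only on grid nodes; the key maneuver is to introduce the auxiliary interpolant $I[\widehat{u^h_k}]$ of the semi-discrete solution to cleanly separate the pure interpolation error (absorbing one factor of $L\Delta x$ per step) from the propagation of the inductive error evaluated at vertices. Once this decomposition is set up, the Lipschitz regularity of $u^h_k$ provided by Proposition \ref{prop:exist_semidiscrete_HJB} does all the work.
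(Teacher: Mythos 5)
Your proof is correct and follows essentially the same route as the paper's: a backward induction in which the error at a general point is split into the interpolation/Lipschitz-variation error of $u^h_k$ (contributing one $L\Dx$ per step via \eqref{eq:Lipschitz} and the uniform Lipschitz bound of Proposition \ref{prop:exist_semidiscrete_HJB}) plus the grid-point error, which is propagated through the convex weights $\tfrac{1}{2r}\beta_j$ of the two schemes. Your phrasing of the grid-point step as a direct application of the inductive sup-norm hypothesis is a slightly cleaner packaging of exactly the argument the paper sketches for $k=N-1, N-2,\dots$.
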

\begin{proof}
At step $N$, the estimate follows from \eqref{eq:Lipschitz}. At step $N-1$, we have for $x\in\R^d$
	\begin{equation}\label{eq:est_1}
		\begin{split}
			&|u^h_{N-1}(x)-\bar u^{\D}_{N-1}(x)|=|u^h_{N-1}(x)-\sum_{i\in\mI^\Dx}\beta_i(x)\bar u^\D_{N-1}(x_i)|\\
			& \le \sum_{i\in\mI^\Dx}\beta_i(x)|u^h_{N-1}(x)-u^h_{N-1}(x_i)| 
			+\sum_{i\in\mI^\Dx}\beta_i(x)|u^h_{N-1}(x_i)-\bar u^\D_{N-1}(x_i)|.
			\end{split}
	\end{equation}
Since $\beta_i(x)\neq 0$ if and only if $x_i$ is a vertex of the triangle containing $x$, by \eqref{eq:stima_lip} we have
\begin{equation}\label{eq:est_2}
	|u^h_{N-1}(x)-u^h_{N-1}(x_i)|\le L\Dx.
\end{equation}
Moreover, since $\bar u^{\D}_{N-1}(x_i)=u^{\D}_{N-1}(x_i)$,  we have
\begin{align*}
&u^h_{N-1}(x_i)=\frac{1}{2r} \sum_{\ell=1}^{r} \left(u_T(\X^{k}_{\ell,+}(x_i))+  u_T(\X^{k}_{\ell,-}(x_i) )\right),	\\
&\bar u^\D_{N-1}(x_i)=\frac{1}{2r} \sum_{\ell=1}^{r} \left(I [\hat u_T](\X^{k}_{\ell,+}(x_i))+  I[\hat u_T](\X^{k}_{\ell,-}(x_i) )\right).
\end{align*}
Subtracting the previous equation and using \eqref{eq:Lipschitz}, we get
\begin{equation}\label{eq:est_3}
|u^h_{N-1}(x_i) -u^\D_{N-1}(x_i)|\le L_T\Dx.
\end{equation}
Plugging \eqref{eq:est_2} and \eqref{eq:est_3} into \eqref{eq:est_1}, we get \eqref{eq:est_discr_fullydiscr} for $k=N-1$. Following a similar argument for $k=N-2, N-3,\ldots$ and exploiting the estimate obtained at the previous step to bound $u^h_{k}(x_i) -\bar u^\D_{k}(x_i)$, we obtain \eqref{eq:est_discr_fullydiscr} for any $k=0,\dots,N$.
\end{proof}
We now discuss the convergence of $\bar u^\D$ to $u$. We first  extend  $u^h$ and $\bar u^\D$ to $[0,T]\times \R^d$ by setting, for $\phi_k=u^h_k,\, \bar u^\D_k$,
\begin{align*}
\phi(t,x)=\frac{(k+1)h-t}{h}	\phi_k(x) +\frac{t-kh}{h}\phi_{k+1}(x),\quad t\in [kh, (k+1)h), \ \ k=0,\dots,N-1.
\end{align*}
We use a definition  of $L^1$-viscosity solution equivalent to Def. \ref{def:visc} which seems more suitable for stability properties (see \cite{lp} for the equivalence of the two definitions and \cite{b,m} for related stability properties). We set
\begin{equation}
	H(t,x,p,X)=\tr[ a(t,x) X] - b(t,x) p
\end{equation}
and
\begin{align*}
	\overline H(t,x,p,X)=\tr[ a(t,x) X] - \underline{b}(t,x, p),\\
	\underline H(t,x,p,X)=\tr[ a(t,x)X] - \overline{b}(t,x, p),
\end{align*}
where $\underline{b}$, $\overline{b}$ are defined in \eqref{eq:relaxed_b}.
\begin{defn}\label{def:L1_visco}
	An u.s.c. (respectively, l.s.c.) function  $u:\R^d\times [0,T]\lra \R$ is called a subsolution (respectively, supersolution)  of \eqref{eq:HJ} if $u(\cdot,T)\le u_T(x)$ (resp., $\ge$)
	and  
	\begin{itemize}
		\item[$(i)$] for any $\phi \in C^{1,2}((0,T)\times \R^d )$ and any $\gamma\in L^1(0,T)$ such that the function $u(t,x)-\phi(t,x)-\int_t^T\gamma(s)ds$ has a local maximum (resp., minimum) at $(t_0,x_0)\in\R^d\times (0,T)$;
		\item[$(ii)$] for any continuous function $G:(0,T)\times\R^d\times  \R^d\times S^d\lra \R$ such that
		\begin{align*}
		&	\overline H(t,x,p,X)-\gamma(t)\le G(t,x,p,X)\\
		&	\Big(\text{resp.}\quad \underline H(t,x,p,X)-\gamma(t)\ge G(t,x,p,X)\Big)
		\end{align*}
		for all $(x,p,X)$ in a neighborhood of $(x_0, D\phi(t_0,x_0), D^2\phi(t_0,x_0))$ and almost all $t$ in a neighborhood of $t_0$, we have
	\begin{align*}
	-\frac{\partial\phi}{\partial t}(t_0,x_0)\le G(t_0,x_0, D\phi(t_0,x_0), D^2\phi(t_0,x_0))\\
		\left(\text{resp.}\quad -\frac{\partial\phi}{\partial t}(t_0,x_0)\ge G(t_0,x_0, D\phi(t_0,x_0), D^2\phi(t_0,x_0))\right).
	\end{align*}	
	\end{itemize}
	If $u \in C([0,T] \times \R^d)$ is both a sub and supersolution, we say $u$ is a solution of \eqref{eq:HJ}.
\end{defn}


\begin{thm}\label{thm:fully_to_continuous_HJ}
	Assume that $u_T$ is   Lipschitz continuous. Then, 	for $|\D|=|(h,\Dx)|\to 0$ with $\Dx/h\to0$, the  function $\bar u^\D$,  given by the linear interpolation of the solution of \eqref{eq:scheme_HJ} on $[0,T]\times\R^d$,	converges to the solution $u$ of \eqref{eq:HJ}, locally uniformly in $[0,T]\times\R^d$.
\end{thm}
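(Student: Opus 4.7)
The plan is to use the semi-discrete scheme \eqref{eq:scheme_HJ_semi} as a bridge, splitting the error as $\bar u^\D - u = (\bar u^\D - u^h) + (u^h - u)$. The first piece is quantitative: Proposition \ref{prop:semi-to-fully}, extended in $t$ by the piecewise-affine interpolation, gives $\|\bar u^\D - u^h\|_{L^\infty([0,T]\times\R^d)} \le C(T/h+1)\Dx$, which vanishes precisely under the assumption $\Dx/h\to 0$. The remaining task is therefore to prove $u^h\to u$ locally uniformly as $h\to 0$.

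For the semi-discrete-to-continuous passage I would adapt the Barles--Souganidis half-relaxed-limits method to the $L^1$-viscosity framework of Definition \ref{def:L1_visco}. Proposition \ref{prop:exist_semidiscrete_HJB} supplies $\|u^h\|_\infty \le \|u_T\|_\infty$ and a uniform Lipschitz bound in $x$; combined with the discrete increment estimate \eqref{eq:dp_discrete3} and the representation formula \eqref{eq:formula_rapp_discr}, it also yields uniform $1/2$-Hölder continuity in $t$. Set $\bar u = \limsup{}^* u^h$ and $\underline u = \liminf_{*} u^h$ as $h\to 0$; then $\underline u \le \bar u$ trivially, and the terminal condition passes to the limit as $\bar u(T,\cdot)\le u_T\le \underline u(T,\cdot)$. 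Locally uniform convergence to a unique limit therefore reduces to the reverse inequality $\bar u\le \underline u$, which follows from the comparison principle Theorem \ref{thm:comparison} (in its $L^1$-viscosity form, cf.\ the equivalence in \cite{lp}) once $\bar u$ is shown to be a subsolution and $\underline u$ a supersolution of \eqref{eq:HJ}.

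The crucial step is the sub/supersolution verification. Fix $\phi\in C^{1,2}$, $\gamma\in L^1(0,T)$ and a continuous $G$ dominating $\overline H - \gamma$ near $(t_0, x_0, D\phi(t_0,x_0), D^2\phi(t_0,x_0))$ as in Definition \ref{def:L1_visco}, and assume $\bar u-\phi-\int_t^T\gamma(s)\,ds$ attains a strict local maximum at $(t_0,x_0)$. A standard perturbation argument produces $(h_n, k_n, x_n)$ with $h_n\to 0$, $t_{k_n}\to t_0$ and $x_n\to x_0$ at which $u^{h_n}-\phi-\int_t^T\gamma(s)\,ds$ attains a local maximum on $\{t_{k_n}\}\times\R^d$ (no grid projection is required, since $u^h$ is defined on all of $\R^d$). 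Substituting the scheme $u^{h_n}_{k_n}(x_n)=S^{h_n}(u^{h_n}_{k_n+1},x_n,k_n)$ and Taylor-expanding $\phi$ along the increments $\X^{k_n}_{\ell,\pm}(x_n)=x_n-h_n b^{k_n}_{h_n}(x_n)\pm\sqrt{r h_n}\sigma^{k_n}_{h_n,\ell}(x_n)$, the $\pm$-average cancels the $O(\sqrt{h_n})$ martingale noise, while the quadratic terms recombine via $a=\tfrac12\sigma\sigma^T$ into $h_n\,\tr[a^{k_n}_{h_n} D^2\phi]$, producing
\[
-\frac{\phi(t_{k_n+1},x_n)-\phi(t_{k_n},x_n)}{h_n} \le \frac{1}{h_n}\!\int_{t_{k_n}}^{t_{k_n+1}}\!\!\bigl(\tr[a_h(s,x_n)D^2\phi] - b_h(s,x_n)\cdot D\phi\bigr)ds + \gamma(t_{k_n}) + o(1).
\]
Passing $n\to\infty$ by dominated convergence, using $b_h\to b$ and $\sigma_h\to\sigma$ a.e., the uniform bounds \eqref{hyp:b_bounded}--\eqref{hyp:sigma_bounded}, the upper semicontinuity of $\underline b(\cdot,\cdot,p)$ in $x$ and the domination by $G$, yields the inequality $-\partial_t\phi(t_0,x_0)\le G(t_0,x_0, D\phi, D^2\phi)$ demanded by Definition \ref{def:L1_visco}. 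The supersolution property for $\underline u$ is symmetric, with $\overline b$ in place of $\underline b$ and $G$ now dominated below by $\underline H - \gamma$.

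The main obstacle is precisely this passage to the limit under a merely measurable velocity: the classical Barles--Souganidis recipe requires continuity of the Hamiltonian, and only the $L^1$-viscosity notion of Definition \ref{def:L1_visco}, with its auxiliary $\gamma\in L^1(0,T)$ absorbing the time-irregularities and its continuous dominating $G$, is compatible with the mollified coefficients $b^k_h, \sigma^k_h$ built into the scheme. The uniform preservation of the (OSLC) bound under spatial convolution and the $r$-dimensional random-walk design that correctly reproduces the quadratic variation of $\sigma\,dW$ are exactly what make the discrete operator consistent with the relaxed Hamiltonians $\underline H, \overline H$ in the limit, thereby closing the comparison step.
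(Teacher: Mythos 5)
Your proposal is correct and follows essentially the same route as the paper: the error is split through the semi-discrete scheme \eqref{eq:scheme_HJ_semi} using Proposition \ref{prop:semi-to-fully} (which requires exactly $\Dx/h\to 0$), and the passage $u^h\to u$ is handled by Barles--Souganidis half-relaxed limits in the $L^1$-viscosity framework of Definition \ref{def:L1_visco}, closed by the comparison principle. The only point where the paper is more careful than your sketch is the extraction of approximate maximum points: since $\gamma$ and the coefficients are merely measurable in time, one cannot evaluate $\gamma(t_{k_n})$ pointwise, and the paper instead subtracts a consistency-error term $\cE_h(t)$ (built from the discrete Hamiltonian via Lemma \ref{lem:approx}) from the auxiliary function before locating the maxima, so that only the time-average $\frac1h\int_{t^h}^{t^h+h}\bigl(H-\gamma\bigr)\,ds$ survives and is controlled by the continuous majorant $G$.
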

\begin{proof}
	By \eqref{eq:est_discr_fullydiscr}, we have
\begin{equation*}
		\sup_{t\in[0,T]}\|u^h(t)-u^\D(t)\|_{L^\infty(\R^d)}\le LT\frac{\Dx}{h}
\end{equation*}
with $L$ independent of $h$ and $\Dx$. Hence, it is sufficient to show that $u^h$ converges,  locally uniformly in $[0,T]\times\R^d$, to  $u$ and, for this, we use the    stability argument   in \cite{bs} adapted to Def. \ref{def:L1_visco}  (see  \cite{m} for a related result). We set 	
	\begin{equation*}
		\underline{u}(t,x):=\liminf_{h\to 0} \hskip -2pt_{*}    u^h(s,y),  \quad \quad \overline{u}(t,x):=\limsup_{h\to 0}\hskip -2pt^{*}      u^h(s,y) . 
	\end{equation*} 
	We verify that $\overline{u}$ is a subsolution of \eqref{eq:HJ}. Notice that, due to \eqref{eq:dp_discrete2} and \eqref{eq:dp_discrete3}, we have that $u^h$ is   uniformly   continuous   in $x$ and $t$. Hence, recalling that $u^h(T,x)=u_T(x)$,  we have $\overline{u}(T,x)=u_T(x)$.
\par
	Let $\phi\in C^{2}((0,T)\times \R^d)$ and $\gamma\in L^1(0,T)$ be such that
	$\bar u(t,x)-\phi(t,x)-\int_0^t\gamma(s)ds$ has a local maximum point at
	 $(t_0,x_0)\in (0,T)\times  \R^d $ and let $G$ be a continuous function such that
	 \begin{equation}\label{eq:G}
	 	\overline H(t,x,p,X)-\gamma(t)\le G(t,x,p,X)
	 \end{equation}
	 in a neighborhood of $(x_0, D\phi(t_0,x_0), D^2\phi(t_0,x_0))$ and for almost all $t$ in a neighborhood of $t_0$. We have to prove that
	 \begin{equation}\label{eq:subsol}
	 	\phi_t(t_0,x_0)\le G(t_0,x_0, D\phi(t_0,x_0), D^2\phi(t_0,x_0)).
	 \end{equation}
It is not restrictive to assume that $(t_0,x_0)$ is a strict global maximum point and that, additionally,   $\sup_{t\in (0,T)}\|\phi(\cdot,t)\|_{C^2(\R^d)}$ is finite. We set
\begin{equation}\label{eq:ham_discret}	 
H^h(w,x,k)=\frac{S^h(w,x,k)-w(x)}{h},
\end{equation}
where $S^h$ is defined as in \eqref{eq:op_discreto}. We have  for $\psi(\cdot)=\phi(t_0,\cdot)$
\begin{equation}\label{eq:stima_oper}
\lim_{h\to 0} \sum_{k=[t/h]}^{N-1}h H^h(\psi,x_0,k)=\int_{t}^{T}H(s,x_0,D\psi(x_0), D^2\psi(x_0))ds,\quad \forall t\in [0,T],
\end{equation}
locally uniformly in $x$ (see Lemma \ref{lem:approx} at the end of the proof). For $t\in [0,T]$, we define
\[
\mB^h(t,x)=u^h(t,x)-\phi(t,x)-\int_t^T\gamma(s)ds-\cE_h(t),
\]
where
\[
\cE_h(t)=\sum_{k=[t/h]}^{N-1}h H^h(\phi(t_0,\cdot),x_0,k)-\int_{t}^{T}H(s,x_0,D\phi(t_0,x_0), D^2\phi(t_0,x_0))ds.
\] 
By \eqref{eq:stima_oper}, since $\limsup_{h\to 0}^*\mB^h(t,x)=u^*(t,x)-\phi(t,x)-\int_t^T\gamma(s)ds$,   there exists a sequence $(t^h,x^h)$, with $t^h=k^h h$, of global maximum points for $\mB^h-\phi$ converging to $(t_0,x_0)$ for $h\to 0^+$. We set $C^h=\mB^h(t^h,x^h)$ and we observe that 
\begin{align*}
&	u^h(t,x)\le \phi(t,x)+\int_t^T\gamma(s)ds+\cE_h(t)+C^h,\qquad \forall (t,x)\in\R^d\times (0,T),\\
&	u^h(t^h,x^h)=\phi(t^h,x^h)+\int_{t^h}^T\gamma(s)ds+\cE_h(t^h)+C^h.
\end{align*}
Hence, by \eqref{eq:scheme_HJ_semi}
\begin{align*}
\phi(t^h,x^h)&+\int_{t^h}^T\gamma(s)ds+\cE_h(t^h)+C^h=u^h(t^h,x^h)=u^h_{k^h}(x^h)= S^h (u^h_{k^h+1}(\cdot),x^h,k^h) \\
&\le S^h (\phi( t^h+h, \cdot),x^h,k^h)+\int_{t^h+h}^T\gamma(s)ds+\cE_h(t^h+h)+C^h
\end{align*}
and, recalling \eqref{eq:ham_discret}, we have
\begin{align*}
-\frac{\phi(t^h+h, x^h)-\phi(t^h,x^h )}{h}&	\le H^h(\phi(t^h+h, \cdot),x^h,k^h)
-\frac{1}{h}\int_{t^h}^{t^h+h}\gamma(s)ds\\
&+\frac{1}{h}\left(\cE_h(t^h+h)-\cE_h( t^h)\right).
\end{align*}
By the definition of $\cE_h$, we get
\begin{align*}
	&-\frac{\phi(t^h+h, x^h )-\phi(t^h,x^h )}{h}	\le H^h(\phi(t^h+h,\cdot),x^h,k^h) -\frac{1}{h}\int_{t^h}^{t^h+h}\gamma(s)ds\\
	&+H^h(\phi(t_0,\cdot), x_0, k^h)+\frac{1}{h}\int_{t^h}^{t^h+h}H(x_0,s,D\phi(t_0,x_0),D^2\phi(t_0,x_0))ds\\
	&= H^h(\phi(\cdot, t^h+h),x^h,k^h)-H^h(\phi(t_0,\cdot), x_0, k^h)\\
	&+\frac{1}{h}\int_{t^h}^{t^h+h}\left(H(x_0,s,D\phi(t_0,x_0),D^2\phi(t_0,x_0))-\gamma(s)\right)ds.
\end{align*}
Passing to the limit for $h\to 0$, recalling \eqref{eq:G} and since
\[
H^h(\phi(t^h+h,\cdot),x^h,k^h)-H^h(\phi(t_0,\cdot), x_0, k^h)\lra 0\quad\text{for $h\to 0$},
\]
we get \eqref{eq:subsol}. By a similar argument, it is possible to show that $\underline u$ is a supersolution of \eqref{eq:HJ}. Hence by Theorem \ref{thm:comparison}, we get  $u=\overline u=\underline u$ and the local uniform convergence of $u^h$ to $u$.
\end{proof}
\begin{lemma}\label{lem:approx}
	Let $\psi\in C^2(\R^d)$. Then
	\begin{equation}\label{eq:conv_lemma}
		\lim_{h\to 0} \sum_{k=[t/h]}^{N-1}h H^h(\psi,x,k)=\int_{t}^{T}H(s,x,D\psi(x), D^2\psi(x))ds,\qquad \forall t\in [0,T),
	\end{equation}
	locally uniformly in $x\in\R^d$.  
\end{lemma}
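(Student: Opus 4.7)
The plan is to Taylor expand $\psi$ to second order at each of the $2r$ points $\X^{k}_{\ell,\pm}(x)=x-hb_h^k(x)\pm\sqrt{rh}\,\sigma^k_{h,\ell}(x)$ appearing in $S^h(\psi,x,k)$, exploit the $\pm$-symmetry to cancel the odd-order terms in $\sqrt{h}\sigma$, identify the surviving quantity as a discrete version of the Hamiltonian, and finally pass to the limit as a Riemann sum approximating $\int_t^T H(s,x,D\psi(x),D^2\psi(x))\,ds$.

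First I would use the $L^\infty$ bounds \eqref{hyp:b_bounded}--\eqref{hyp:sigma_bounded} to note that $|\X^{k}_{\ell,\pm}(x)-x|=O(\sqrt{h})$. Writing $A=D^2\psi(x)$ and $\delta_{\pm}=\X^{k}_{\ell,\pm}(x)-x$, second-order Taylor expansion together with $\psi\in C^2$ gives a remainder of size $o(|\delta_\pm|^2)=o(h)$, locally uniformly in $x$. After averaging $\psi(\X^{k}_{\ell,+})$ and $\psi(\X^{k}_{\ell,-})$, the linear-in-$\sqrt{h}\sigma$ cross terms cancel and a direct calculation produces
\[
H^h(\psi,x,k)=-b_h^k(x)\cdot D\psi(x)+\tr\!\bigl[\hat{a}_h^k(x)\,D^2\psi(x)\bigr]+\tfrac{h}{2}\,b_h^k(x)^T D^2\psi(x)\,b_h^k(x)+\varepsilon_h^k(x),
\]
with $\hat{a}_h^k(x):=\tfrac12\sum_{\ell=1}^{r}\sigma^k_{h,\ell}(x)\sigma^k_{h,\ell}(x)^T$ and $\varepsilon_h^k(x)=o(1)$ locally uniformly in $x$ as $h\to 0$. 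Multiplying by $h$ and summing over $k\in\{[t/h],\dots,N-1\}$, the $\varepsilon_h^k$-contribution is $o(1)$ since $Nh=T$, and the cubic-in-$h$ drift-squared term is $O(h)$ in total.

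For the drift term, by the very definition of $b_h^k$ and $b_h=b(\cdot,\cdot)*\rho_h$,
\[
\sum_{k=[t/h]}^{N-1} h\,b_h^k(x)=\int_{t_{[t/h]}}^{T}b_h(s,x)\,ds=\Bigl(\int_{t_{[t/h]}}^{T}b(s,\cdot)\,ds\Bigr)*\rho_h\,(x),
\]
which converges to $\int_t^T b(s,x)\,ds$ as $h\to 0$ by standard mollifier theory (pointwise a.e., locally uniformly at continuity points of the time-integral). This pointwise convergence in $x$ is exactly what is needed for the application of the Lemma in Theorem \ref{thm:fully_to_continuous_HJ}, where $x=x_0$ is fixed.

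The main obstacle will be the diffusion term $\sum_k h\,\hat{a}_h^k(x)=\tfrac12\sum_k h^{-1}\bigl(\int_{t_k}^{t_{k+1}}\sigma(s,x)\,ds\bigr)\bigl(\int_{t_k}^{t_{k+1}}\sigma(s,x)\,ds\bigr)^T$, which is built from products of time-averages of $\sigma$, whereas the target $\int_t^T a(s,x)\,ds=\tfrac12\int_t^T\sigma(s,x)\sigma(s,x)^T\,ds$ involves the time-average of the product. To bridge this mismatch I would approximate $\sigma$ by a sequence $\sigma^\varepsilon$ continuous in $t$ (for which the convergence is the classical Riemann-sum result), control the defect between the two expressions via Jensen's inequality using the $L^2$-in-time bound from \eqref{hyp:sigma}, and then let $\varepsilon\to 0$ by dominated convergence. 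Combining the drift and diffusion limits yields \eqref{eq:conv_lemma}.
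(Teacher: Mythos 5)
Your proposal follows essentially the same route as the paper's proof: a symmetric second-order Taylor expansion of $\psi$ at the points $\X^{k}_{\ell,\pm}(x)$, using the $L^\infty$ bounds on $b$ and $\sigma$ and the cancellation of the odd terms, which yields $H^h(\psi,x,k)=\tr[\tfrac12\sigma^k_h(\sigma^k_h)^TD^2\psi(x)]-b^k_h(x)\cdot D\psi(x)+o(1)$, followed by a Riemann-sum passage to the limit in which the residual terms are exactly the two the paper isolates, namely the mollification error $\int\rho_h(x-y)(b(s,y)-b(s,x))\,dy$ and the mismatch between the product of time-averages of $\sigma$ and the time-average of $\sigma\sigma^T$. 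You in fact give more detail than the paper on these last two terms (the paper merely asserts that they vanish), and your caveat that the drift term converges only pointwise a.e.\ (at Lebesgue points of $b(s,\cdot)$), rather than locally uniformly as the lemma is stated, is a fair observation that applies equally to the paper's own argument.
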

\begin{proof}
 	We first observe that
	\begin{align*}
			H^h(\psi, x,k)&=\frac{1}{2r} \sum_{\ell=1}^{r}\frac{1}{h} \left[\psi(x  - h b_h^k(x )+ \sqrt{r h }\sigma^k_{h,\ell}(x ) ) 
			+\psi(x  - h b_h^k(x )-\sqrt{r h }\sigma^k_{h,\ell}(x ) )-2\psi(x)\right]\\		
	&=\tr\left[ \frac{1}{2} \sigma^h_k (\sigma^h_k)^T D^2 \psi(x)\right] - b^h_k(x) D \psi(x)+o(h)
	\end{align*}
where  $ o(h)\lra 0$ for $h\to 0$.
Hence 
\begin{align*}
\sum_{k=[t/h]}^{N-1}h H^h(\psi,x,k)&=\sum_{k=[t/h]}^{N-1}h\Bigg[\frac{1}{2}\tr\left(\frac{1}{h}\int_{kh}^{(k+1)h}\sigma(s,x)ds \frac{1}{h}\int_{kh}^{(k+1)h}\sigma^T(s,x)dsD^2\psi(x)\right)\\
&-\left(\frac{1}{h}\int_{kh}^{(k+1)h}\int_{\R^d}\rho_h(x-y)b(s,y)dyds\right)D\psi(x)\Bigg]+o(h)\\
&=\int_{t}^{T}H(x,s,D\psi(x), D^2\psi(x))ds-\int_{ [t/h]h}^tH(x,s,D\psi(x), D^2\psi(x))ds\\
&+\int_{ [t/h]h}^T\int_{\R^d}\rho_h(x-y)(b(s,y)-b(s,x))dyds\\
&+\sum_{k=[t/h]}^{N-1}h\frac{1}{2}\tr\Bigg(\frac{1}{h}\int_{kh}^{(k+1)h}\sigma(s,x)ds \frac{1}{h}\int_{kh}^{(k+1)h}\sigma^T(s,x)ds\\
&-\frac{1}{h}\int_{kh}^{(k+1)h}\sigma(s,x)\sigma^T(s,x)ds\Bigg)D^2\psi(x) +o(h)
\end{align*}
and therefore \eqref{eq:conv_lemma} follows.
\end{proof}
\section{Convergence of the scheme for the conservative equation}
The aim of this section is to show that the scheme \eqref{eq:scheme_FP} converges to the unique duality solution, in the sense of Definition \ref{def:dual_sol}, of the conservative equation \eqref{eq:FP}.
In this section, besides \eqref{hyp:OSLC}, \eqref{hyp:b_bounded}, \eqref{hyp:sigma_bounded}, we assume that
\begin{equation}\label{hyp:m_0}
	f_0\in \mP_2(\R^d) \quad \text{and} \quad \supp[ f_0]\subset B(0,R)\quad\text{for some $R>0$}
\end{equation}
where $\supp[ f_0]$ denotes the support of the measure $f_0$.
\begin{lemma}
\label{convf0}
Let  $f^\D_0$ be defined as in \eqref{eq:approx_init_meas}. Then $f^\D_0\in \mP_2(\R^d)$ and 
$\mW_1(f^\Delta_0, f_0)\lra 0$   as $\Delta\to 0$.
\end{lemma}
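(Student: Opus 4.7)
The claim splits into two pieces: that $f^\Delta_0$ is a probability measure with finite second moment, and that $\mW_1(f^\Delta_0,f_0)\to 0$. The first is essentially contained in the construction; the second will follow from the Kantorovich--Rubinstein duality by a triangle-by-triangle estimate, the only nontrivial feature of which is recognizing that both the nodal interpolation error and the collapse of $f_0$ to the triangle barycenter contribute a term of order $\Dx$.

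For membership in $\mP_2(\R^d)$, the identity $\sum_i f^\Delta_0(i)=1$ was already obtained in \eqref{eq:approx_init_meas2}, so $f^\Delta_0\in\mP(\R^d)$. To bound the second moment I would exploit the compact-support assumption \eqref{hyp:m_0}: only simplices $T_\tau$ intersecting $B(0,R)$ can carry mass $f_\tau^B\neq 0$, and for any such $\tau$ every vertex $x_i$ with $\beta_i(B_\tau^\D)\neq 0$ lies within distance $\Dx$ of $B_\tau^\D\in T_\tau\subset B(0,R+\Dx)$, hence satisfies $|x_i|\le R+2\Dx$. Combined with \eqref{eq:baricentric}, this yields
$$\int_{\R^d}|x|^2\,f^\Delta_0(dx)=\sum_{\tau\in\mK^\Dx}f_\tau^B\sum_{i\in\mI^\Dx}|x_i|^2\beta_i(B_\tau^\D)\le (R+2\Dx)^2<\infty.$$

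For the $\mW_1$ convergence, by the Kantorovich--Rubinstein formula recalled in the Notations it suffices to bound $\int\phi\,d(f^\Delta_0-f_0)$ uniformly over $1$-Lipschitz test functions $\phi$. Writing both integrals as sums over triangles and using $\sum_{i}\phi(x_i)\beta_i(B_\tau^\D)=I[\hat\phi](B_\tau^\D)$, the contribution of $T_\tau$ decomposes as
$$I[\hat\phi](B_\tau^\D)\,f_\tau^B-\int_{T_\tau}\phi(x)\,f_0(dx)=\bigl(I[\hat\phi](B_\tau^\D)-\phi(B_\tau^\D)\bigr)f_\tau^B+\int_{T_\tau}\bigl(\phi(B_\tau^\D)-\phi(x)\bigr)f_0(dx).$$
Applying \eqref{eq:Lipschitz} with $L=1$ to the first summand, and the $1$-Lipschitz property of $\phi$ together with $|x-B_\tau^\D|\le\Dx$ for $x\in T_\tau$ to the second, each simplex contributes at most $2\Dx\,f_\tau^B$. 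Summing over $\tau$ and using $\sum_\tau f_\tau^B=1$ yields $\mW_1(f^\Delta_0,f_0)\le 2\Dx$, which vanishes as $\Delta\to 0$. I do not foresee any genuine obstacle; the argument is a direct combination of the interpolation estimate \eqref{eq:Lipschitz} with the compactness of $\supp(f_0)$, once one identifies that the scheme \eqref{eq:approx_init_meas} evaluates the nodal interpolant of any test function at the barycenters weighted by $f_0$.
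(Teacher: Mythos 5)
Your proof is correct and follows essentially the same route as the paper: the Kantorovich--Rubinstein bound is obtained by the same triangle-by-triangle decomposition (the paper bounds $\sum_i\beta_i(B_\tau^\D)\phi(x_i)-\phi(x)$ directly by $\diam T_\tau\le\Dx$, whereas you insert $\phi(B_\tau^\D)$ and get $2\Dx$ --- an immaterial difference). The only other variation is the second-moment bound, where you invoke the compact-support hypothesis \eqref{hyp:m_0} while the paper uses $f_0\in\mP_2$ and the bound $|x_i-x|\le\Dx$ on each simplex; both are valid under the standing assumptions of that section.
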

\begin{proof}
Recall that, by \eqref{eq:approx_init_meas},
$f^\Delta_{0}=\sum_{i}f^\Delta_{0}(i)\delta_{x_i}$ where 
$f^\Delta_{0}(i)=\sum_{\tau\in \mK^\Dx}\beta_i(B_\tau^\D)f_\tau^B$ and $f_\tau^B=\int_{T_\tau} f_0(dx)$. By \eqref{eq:approx_measure}, we have $\int_{\R^d}f^\D_0(dx)=\int_{\R^d}f_0(dx)=1$. Moreover
\begin{align*}
&\int_{\R^d}|x|^2f^\D_0(dx)=	\sum_{i\in \mI^\Dx}|x_i|^2f^\Delta_{0}(i)=\sum_{i\in \mI^\Dx}|x_i|^2\sum_{\tau\in \mK^\Dx}\beta_i(B_\tau^\D) \int_{T_\tau} f_0(dx)\\
&=\sum_{\tau\in \mK^\Dx}\int_{T_\tau}\sum_{i\in \mI^\Dx}\beta_i(B_\tau^\D) |(x_i-x)+x|^2f_0(dx).
\end{align*}
Since $|x_i-x|\le \Dx$ for $x,x_i\in T_\tau$, $\beta_i(B_\tau^\D)= 0$   if $x_i\not \in T_\tau$ and $\sum_{i\in \mI^\Dx}\beta_i(B_\tau^\D)=1$, we get
\begin{align*}
	\int_{\R^d}|x|^2f^\D_0(dx)&\le \Dx^2\sum_{\tau\in \mK^\Dx}\int_{T_\tau}\sum_{i\in \mI^\Dx}\beta_i(B_\tau^\D)f_0(dx)+\sum_{\tau\in \mK^\Dx}\int_{T_\tau}|x|^2\sum_{i\in \mI^\Dx}\beta_i(B_\tau^\D)f_0(dx)\\
	&\le\Dx^2+\int_{\R^d}|x|^2f_0(dx).
\end{align*}
We claim that $\mW_1(f^\D_0,f^\D)\lra 0$ for $\D\to 0$. For $\phi$ 1-Lipschitz, we have
\begin{align*}
\int_{\R^d}\phi(x)& (f^\Delta_0(dx)-f_0(dx))=\int_{\R^d}\phi(x)\left(\sum_{i\in \mI^\Dx}f^\Delta_{0}(i)\delta_{x_i}-f_0(dx)\right)\\
&	= \sum_{i\in \mI^\D}\beta_i(B_\tau^\D)f_\tau^B\phi(x_i)-\sum_{\tau\in \mK^\D}\int_{T_\tau}\phi(x) f_0(dx) \\
& 	=	\sum_{\tau\in \mK^\D}\left[\int_{T_\tau}\left(\sum_{i\in \mI^\D}\beta_i(B_\tau^\D)\phi(x_i)-\phi(x)\right) f_0(dx)\right].
 \\&	\le \sup_{\tau}\{ \hbox{diam}T_\tau\}\sum_{\tau\in \mK^\D}\int_{T_\tau}  f_0(dx)\le
   \Delta x \int_{\R^d}f_0(dx) 
\end{align*}
and therefore the claim follows.
\end{proof}

\begin{lemma}
Let $u^\D$ be the solution of \eqref{eq:scheme_HJ}. Then
\begin{equation}\label{eq:form_rapp_HJ}
	u^\D_{k}(i)=\EE[u_T(\Phi^\D_{N,k}(x_i))],\qquad i\in\mI^\D, \ k=0,\ldots,N,
\end{equation}	
where $\Phi^\D_{N,k}(x_i)$ is defined as in \eqref{eq:markov_chain}.
\end{lemma}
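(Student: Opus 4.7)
My plan is to prove the representation formula by backward induction on $k$, mimicking the argument used for Proposition \ref{prop:exist_semidiscrete_HJB} but now working on the grid $\mV^\Dx$ with the Markov chain whose transition probabilities are given by \eqref{eq:markov_chain}. The key structural ingredient is that $\Phi^\D_{\cdot,\cdot}$ is a (time-inhomogeneous) Markov chain enjoying the flow property $\Phi^\D_{N,k} = \Phi^\D_{N,k+1} \circ \Phi^\D_{k+1,k}$, which lets me reduce a one-step expectation to the known value of $u^\D_{k+1}$.

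\emph{Base case.} For $k=N$, by \eqref{eq:markov_chain} we have $\Phi^\D_{N,N}(x_i)=x_i$ almost surely, so $\EE[u_T(\Phi^\D_{N,N}(x_i))] = u_T(x_i) = u^\D_N(i)$ by the terminal condition in \eqref{eq:scheme_HJ}.

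\emph{Inductive step.} Assume \eqref{eq:form_rapp_HJ} holds at step $k+1$. Using the flow property together with the tower property of conditional expectation and the Markov property (so that, conditioned on $\Phi^\D_{k+1,k}(x_i)=x_j$, the chain restarted at $x_j$ has the same law as $\Phi^\D_{N,k+1}(x_j)$), I would write
\begin{align*}
\EE[u_T(\Phi^\D_{N,k}(x_i))] &= \EE\bigl[\EE[u_T(\Phi^\D_{N,k+1}(\Phi^\D_{k+1,k}(x_i))) \mid \Phi^\D_{k+1,k}(x_i)]\bigr]\\
&= \sum_{j\in\mI^\Dx} \PP\bigl(\Phi^\D_{k+1,k}(x_i)=x_j\bigr)\,\EE[u_T(\Phi^\D_{N,k+1}(x_j))]\\
&= \sum_{j\in\mI^\Dx} p^{k}_{ij}\, u^\D_{k+1}(j) = u^\D_k(i),
\end{align*}
where the third equality uses the definition of the transition probabilities in \eqref{eq:markov_chain} combined with the inductive hypothesis, and the last equality is exactly the rewriting \eqref{eq:scheme_HJ_bis} of the scheme.

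The only point that requires some care — and the closest thing to an obstacle — is the justification that the chain $\{\Phi^\D_{m,k}(x_i)\}$ really does satisfy the flow/Markov identity $\Phi^\D_{N,k} \stackrel{\mathrm{d}}{=} \Phi^\D_{N,k+1} \circ \Phi^\D_{k+1,k}$, which is needed to reduce the $N-k$ step expectation to a one-step expectation composed with a $(N-k-1)$-step expectation. This is built into the construction \eqref{eq:markov_chain}–\eqref{eq:discrete_traject} (the one-step transition at time $k$ depends only on the current state through $b^k_h$ and $\sigma^k_{h,\ell}$, and the $Z^m$ are independent across $m$, exactly as in \eqref{eq:Euler_chain}), so it amounts to invoking the discrete analogue of the semigroup identity already used in the proof of Proposition \ref{prop:exist_semidiscrete_HJB}. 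Once this is in place, the induction closes immediately and yields \eqref{eq:form_rapp_HJ} for every $k=0,\dots,N$.
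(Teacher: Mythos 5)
Your proof is correct and follows essentially the same route as the paper: a backward induction in $k$ that conditions on the first step of the chain and uses the transition probabilities $p^k_{ij}$ from \eqref{eq:markov_chain} to recover the recursion \eqref{eq:scheme_HJ_bis}. The paper writes out the cases $k=N$ and $k=N-1$ explicitly and then iterates, which is exactly the tower-property/Markov argument you formalize in your inductive step.
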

\begin{proof}
	We prove that $u^\D_{k}(i)$ defined as in \eqref{eq:form_rapp_HJ} gives a solution of \eqref{eq:scheme_HJ_bis}.	For $k=N$, since $\PP(\Phi^\D_{N,N}(x_i)=x_j)=\delta_{i,j}$, we get $u^\D_{N}(i)=u_T(x_i)$. For $k=N-1$, by \eqref{eq:markov_chain} and \eqref{eq:form_rapp_HJ} we have
	\begin{align*}
	&	\EE[u_T(\Phi^\D_{N,N-1}(x_i))]=
	\sum_{j\in \mI^\D}\PP(\Phi^\D_{N,N-1}(x_i)=x_j)u_T(x_j)\\
	&=\sum_{j\in \mI^\D}\sum_{k\in \mI^\D}\PP(\Phi^\D_{N,N-1}(x_i)=x_j|\Phi^\D_{N-1,N-1}(x_i)=x_k)\PP(\Phi^\D_{N-1,N-1}(x_i)=x_k)u_T(x_j)\\
	&=\sum_{j\in\mI^\D}\PP(\Phi^\D_{N,N-1}(x_i)=x_j|\Phi^\D_{N-1,N-1}(x_i)=x_i)u_T(x_j)=\sum_{j\in\mI^\D} p^{(N-1)}_{ij}u^\D_{N}(j)=u^\D_{N-1}(i)
	\end{align*}
and hence we get \eqref{eq:form_rapp_HJ} for $k=N-1$. Following a similar argument for $k=N-2, N-3,\ldots$, we obtain that the solution of \eqref{eq:scheme_HJ_bis} is given by the representation formula \eqref{eq:form_rapp_HJ}.
\end{proof}
\begin{lemma}
	Let $f^\D_0$ be given  by \eqref{eq:approx_init_meas} and $f^\D$     by  \eqref{eq:approx_measure}. Then, for $g\in C_b(\R^d)$, 
	\begin{equation}\label{eq:duality_discrete}
		\int_{\R^d} g(x)f^\D_k(dx)=\int_{\R^d}\EE[g(\Phi^\D_{k,0}(x))]f^\D_0(dx).
	\end{equation}	
\end{lemma}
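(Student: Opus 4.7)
The plan is to reduce both sides of \eqref{eq:duality_discrete} to finite sums indexed by the grid, and then establish a discrete conservation identity by playing the forward scheme \eqref{eq:scheme_FP_bis} for the measure against a suitably defined backward scheme of the form \eqref{eq:scheme_HJ_bis} whose terminal datum at time $k$ is the test function $g$. Since the transition probabilities $p^m_{ij}$ are the same in both schemes, exchanging summation gives the required duality as a purely algebraic manipulation.

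Concretely, first I would note that both $f^\D_k$ and $f^\D_0$ are atomic measures on $\mV^\Dx$, so
\[
\int_{\R^d} g(x)\,f^\D_k(dx) = \sum_{i\in\mI^\Dx} f^\D_k(i)\,g(x_i),\qquad
\int_{\R^d} \EE[g(\Phi^\D_{k,0}(x))]\,f^\D_0(dx) = \sum_{j\in\mI^\Dx} f^\D_0(j)\,\EE[g(\Phi^\D_{k,0}(x_j))].
\]
Next, define the auxiliary backward scheme $v^\D_m$, $m=0,\dots,k$, by $v^\D_k(i) = g(x_i)$ and $v^\D_m(i) = \sum_{j\in\mI^\Dx} p^m_{ij} v^\D_{m+1}(j)$. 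By applying the representation formula \eqref{eq:form_rapp_HJ} of the previous lemma with $N$ replaced by $k$ and $u_T$ replaced by $g$, we obtain
\[
v^\D_0(j) = \EE[g(\Phi^\D_{k,0}(x_j))],
\]
so the right-hand side of \eqref{eq:duality_discrete} is exactly $\sum_j f^\D_0(j) v^\D_0(j)$.

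The core step is then the discrete duality: I claim that the pairing $m \mapsto \sum_{i\in\mI^\Dx} f^\D_m(i)\,v^\D_m(i)$ is independent of $m$. Using \eqref{eq:scheme_FP_bis} and \eqref{eq:scheme_HJ_bis}, for any $m\in\{0,\dots,k-1\}$ I swap the order of summation:
\[
\sum_{i} f^\D_{m+1}(i)\,v^\D_{m+1}(i) = \sum_{i}\Big(\sum_{j} p^m_{ji} f^\D_m(j)\Big) v^\D_{m+1}(i) = \sum_{j} f^\D_m(j) \sum_{i} p^m_{ji}\,v^\D_{m+1}(i) = \sum_{j} f^\D_m(j)\,v^\D_m(j).
\]
Iterating this identity from $m=0$ up to $m=k$ yields $\sum_i f^\D_k(i)\,g(x_i) = \sum_j f^\D_0(j)\,v^\D_0(j)$, which is precisely \eqref{eq:duality_discrete}.

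I do not foresee any genuine obstacle here: the statement is essentially the discrete transpose of the Markov chain expectation, and the argument is algebraic once the auxiliary $v^\D$ is introduced. The only mild care is in checking that all finite sums are absolutely convergent so that the interchange of summation is legitimate; this follows from the fact that $f^\D_0$ has finite total mass (by \eqref{eq:approx_init_meas2}), that the transition probabilities satisfy $\sum_i p^m_{ji}=1$ (a consequence of \eqref{eq:baricentric} applied to \eqref{eq:markov_chain}), and that $g$ is bounded, so that $v^\D_m$ remains bounded by $\|g\|_\infty$ throughout the iteration.
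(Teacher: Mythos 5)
Your proof is correct and follows essentially the same route as the paper: the key step in both is the swap of the double sum over the shared transition probabilities $p^m_{ji}$, iterated over the time steps. The paper organizes this as a direct induction on $k$, expanding $\EE[g(\Phi^\D_{k,0}(x_j))]$ by conditioning on the previous step, whereas you package the same computation as the invariance of the pairing $\sum_i f^\D_m(i)v^\D_m(i)$ with the adjoint backward scheme $v^\D$ — a purely presentational difference.
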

\begin{proof}
For $k=0$, \eqref{eq:duality_discrete} is obvious since $\Phi^\D_{0,0}(x_i)=x_i$. For $k=1$, by \eqref{eq:markov_chain} we have
\begin{align*}
&	\int_{\R^d} \EE[g(\Phi^\D_{1,0}(x))] f^\D_0(dx) 
	=\sum_{j\in\mI^\D}f^\D_0(j)\EE[ g(\Phi^\D_{1,0}(x_j))]	\\
	&=\sum_{j\in\mI^\D}f^\D_0(j)\Big(\sum_{i\in\mI^\D} g(x_i)\PP(\Phi^\D_{1,0}(x_j)=x_i)\Big)
	=\sum_{j\in\mI^\D}f^\D_0(j)\Big(\sum_{i\in\mI^\D}g(x_i)p^{(0)}_{ji}\Big)\\
	&=\sum_{i\in\mI^\D}g(x_i)\Big(\sum_{j\in\mI^\D}p^{(0)}_{ji}f^\D_0(j)\Big)	=\sum_{i\in\mI^\D}g(x_i)f^\D_{1}(i)=\int_{\R^d} g(x) f^\D_1(dx),
\end{align*}
and   \eqref{eq:duality_discrete} follows. Repeating a similar argument for $k=2,3,\dots,N$, we obtain the statement.
\end{proof}
\begin{lemma}\label{lem:reg_sol_fp}
	The scheme \eqref{eq:approx_measure} is conservative. Moreover, if $\Dx^2/h$ is uniformly  bounded for $\D\to 0$, then there exists a constant  $C$, independent of $\D$, such that for $k$, $k'\in\{0,\dots,N\}$
	\begin{align}
	   & \int_{\R^d}|x|^2f^\D_k(dx)\le C,\label{eq:2_moment}\\
	&	\mW_1(f^\D_{k'},f^\D_k)\le C(h|k'-k|)^\half.\label{eq:cont_meas}
	\end{align}
\end{lemma}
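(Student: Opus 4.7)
The plan is to combine the partition-of-unity property \eqref{eq:baricentric}, the discrete duality formula \eqref{eq:duality_discrete}, and a direct analysis of the increments of the Markov chain $\Phi^\D$. Conservativity is immediate: summing \eqref{eq:scheme_FP} over $i\in\mI^\Dx$ and using $\sum_i \beta_i(y)=1$ for every $y\in\R^d$ yields
\[
\sum_{i\in\mI^\Dx} f^\D_{k+1}(i)=\frac{1}{2r}\sum_{\ell=1}^r \sum_{j\in\mI^\Dx} 2 f^\D_k(j)=\sum_{j\in\mI^\Dx} f^\D_k(j),
\]
and by \eqref{eq:approx_init_meas2} this common value equals $\int_{\R^d} f_0(dx)=1$, so each $f^\D_k$ is a probability measure.

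\textbf{Uniform second moment.} Since $|x|^2\notin C_b(\R^d)$, the duality formula cannot be applied directly and I would work on the scheme. The decisive local ingredient is the convexity-type bound
\[
\sum_{i\in\mI^\Dx} |x_i|^2 \beta_i(y) = |y|^2 + \sum_{i\in\mI^\Dx} \beta_i(y)\,|x_i-y|^2 \le |y|^2 + \Dx^2,
\]
which follows from $\sum_i \beta_i(y)(x_i-y)=0$ together with $|x_i-y|\le \Dx$ whenever $\beta_i(y)>0$. Applying this at $y=\X^k_{\ell,\pm}(x_j)$, then using the parallelogram identity $|\X^k_{\ell,+}(x_j)|^2+|\X^k_{\ell,-}(x_j)|^2 = 2|x_j-h b^k_h(x_j)|^2 + 2rh|\sigma^k_{h,\ell}(x_j)|^2$, Young's inequality, and \eqref{hyp:b_bounded}--\eqref{hyp:sigma_bounded}, gives a one-step recursion of the form
\[
\int_{\R^d}|x|^2 f^\D_{k+1}(dx) \le (1+h)\int_{\R^d}|x|^2 f^\D_k(dx)+C(h+\Dx^2).
\]
Under the hypothesis $\Dx^2/h$ bounded, a discrete Gronwall argument, together with the estimate $\int |x|^2 f^\D_0(dx)\le \Dx^2+\int |x|^2 f_0(dx)$ obtained as in Lemma \ref{convf0}, closes \eqref{eq:2_moment}.

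\textbf{Wasserstein continuity.} Kantorovich--Rubinstein and \eqref{eq:duality_discrete} give, for any $1$-Lipschitz $\phi$ and $k'\ge k$,
\[
\int_{\R^d}\phi\,d(f^\D_{k'}-f^\D_k)=\int_{\R^d}\EE\bigl[\phi(\Phi^\D_{k',0}(x))-\phi(\Phi^\D_{k,0}(x))\bigr]f^\D_0(dx)\le \int_{\R^d}\bigl(\EE|\Phi^\D_{k',0}(x)-\Phi^\D_{k,0}(x)|^2\bigr)^{1/2} f^\D_0(dx),
\]
so it is enough to control the second moment of the displacement. Writing $X_j=\Phi^\D_{j,0}(x)$ and $\delta_j=X_{j+1}-X_j$, the identity $\sum_i x_i\beta_i(y)=y$ yields $\EE[\delta_j\mid\mF_j]=-h b^j_h(X_j)$, while expanding $\sum_i |x_i-X_j|^2\beta_i(y)$ around $y=\X^j_{\ell,\pm}(X_j)$ (which eliminates the $|X_j|$-dependence) and invoking the convexity bound above gives $\EE[|\delta_j|^2\mid\mF_j]\le \Dx^2 + |\X^j_{\ell,\pm}(X_j)-X_j|^2\le C(h+\Dx^2)\le Ch$ uniformly in $X_j$. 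Decomposing $X_{k'}-X_k=-h\sum_{j=k}^{k'-1}b^j_h(X_j)+\sum_{j=k}^{k'-1}M_j$ into a predictable part and a martingale-difference part, and using the orthogonality of the latter, we get $\EE|X_{k'}-X_k|^2\le h^2(k'-k)^2 C_0^2+C(k'-k)h\le C(k'-k)h$ for $(k'-k)h\le T$; since $f^\D_0$ is a probability measure, \eqref{eq:cont_meas} follows. The main subtlety throughout is the $\mathbb{Q}_1$-interpolation error: rounding to grid vertices creates an $\Dx^2$ correction at each step, and the CFL-like condition $\Dx^2/h$ bounded is used precisely to absorb it into the $Ch$ noise term.
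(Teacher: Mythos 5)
Your proof is correct, and while the conservativity and second-moment parts follow essentially the paper's own route, your treatment of the time-continuity estimate \eqref{eq:cont_meas} is genuinely different and in fact more complete. For the second moment, the paper also works directly on the scheme, bounding $I[|\cdot|^2](y)\le |y|^2+C\Dx^2$ via \eqref{eq:bound_interp_2} and expanding $|\X^k_{\ell,\pm}(x_j)|^2$; the only divergence is that the paper controls the cross term $-2h\,x_j\cdot b^k_h(x_j)$ through the (OSLC) condition \eqref{hyp:OSLC} applied at $y=0$, whereas you use Young's inequality with the boundedness \eqref{hyp:b_bounded} — both are valid here, and your explicit derivation of the convexity bound from $\sum_i\beta_i(y)(x_i-y)=0$ makes transparent that it rests on the exact reproduction of affine functions by the $\mathbb{Q}_1$ interpolant (implicitly also behind \eqref{eq:bound_interp_2}). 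For \eqref{eq:cont_meas}, the paper invokes the displacement estimate \eqref{eq:dp_discrete3}, which was established for the space-continuous chain $\Phi^h$ of \eqref{eq:Euler_chain}, and applies it to the fully discrete chain $\Phi^\D$; strictly speaking this requires accounting for the additional random rounding to grid vertices at each step, which contributes an extra conditional variance of order $\Dx^2$ per step. Your martingale decomposition $X_{k'}-X_k=-h\sum_j b^j_h(X_j)+\sum_j M_j$, with $\EE[\delta_j\mid\mF_j]=-hb^j_h(X_j)$ and $\EE[|\delta_j|^2\mid\mF_j]\le Ch+\Dx^2$, supplies exactly this missing piece and makes explicit where the hypothesis that $\Dx^2/h$ stays bounded is used to absorb the interpolation noise into the $C(k'-k)h$ bound. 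In short: same strategy for \eqref{eq:2_moment}, and a more careful, self-contained argument for \eqref{eq:cont_meas} that closes a step the paper leaves implicit.
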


\begin{proof}
	We first prove that the scheme is conservative. Indeed, for all $k=0,\hdots, N-1$, 
	\begin{align*}
		\sum_{i\in \mI^\Dx} f^\Delta_{k+1}(i) =\sum_{i\in \mI^\Dx}\frac{1}{2r}\sum\limits_{\ell=1}^{r}\sum\limits_{j \in \mI^\Dx} f^\Delta_{k}(j)    \left[\beta_{i} ( \X^{k}_{\ell,+}(x_j))+\beta_{i}(\X^{k}_{\ell,-}(x_j))\right] \\
		=\frac{1}{2r}\sum\limits_{j \in \mI^\Dx} f^\Delta_{k}(j)\sum\limits_{\ell=1}^{r} \sum_{i\in \mI^\Dx}  \left[\beta_{i} ( \X^{k}_{\ell,+}(x_j))+\beta_{i}(\X^{k}_{\ell,-}(x_j))\right]
		=\sum_{j\in \mI^\Dx} f^\Delta_{k}(j).
	\end{align*}
	Moreover, by \eqref{eq:approx_init_meas2}, we have
	$\sum_{i\in \mI^\Dx} f^\Delta_{k}(i)=\sum_{i\in \mI^\Dx} f^\D_{0}(i)=\int_{\R^d}df_0=1$.
\par
	We now prove \eqref{eq:2_moment}. 	With a computation similar to the previous one and taking into account \eqref{eq:bound_interp_2}, we have
	\begin{align*}
		&\int_{\R^d}|x|^2f^\D_{k+1}(dx)=\sum_{i\in  \mI^\Dx}|x_i|^2f^\D_{k+1}(i)\\
		& =\frac{1}{2r}\sum\limits_{j \in \mI^\Dx} f^\Delta_{k}(j)\sum\limits_{\ell=1}^{r} \sum_{i\in \mI^\Dx}|x_i|^2  \left[\beta_{i} ( \X^{k}_{\ell,+}(x_j))+\beta_{i}(\X^{k}_{\ell,-}(x_j))\right]\\
		&=\sum\limits_{j \in \mI^\Dx}f^\Delta_{k}(j)\frac{1}{2r}\sum\limits_{\ell=1}^{r} \left(I[|\cdot|^2]( \X^{k}_{\ell,+}(x_j))+I[|\cdot|^2]( \X^{k}_{\ell,-}(x_j))\right)\\
		&\le\sum\limits_{j \in \mI^\Dx}f^\Delta_{k}(j) \frac{1}{2r}\sum\limits_{\ell=1}^{r} \left(| \X^{k}_{\ell,+}(x_j)|^2+| \X^{k}_{\ell,-}(x_j)|^2+C\Dx^2\right)\\
		&	=\sum\limits_{j \in \mI^\Dx}f^\Delta_{k}(j) \frac{1}{2r}\sum\limits_{\ell=1}^{r} 
		\left(|x_j|^2+h^2|b^h_k(x_j)|^2+2hr |\sigma^{\ell,h}_k(x_j)|^2 -2hx_j b^h_k(x_j)\right)+C\Dx^2\\
		&= \int_{\R^d}|x|^2f^\D_{k}(dx)(1+Ch)+ Ch+C\Dx^2,
	\end{align*}
	where we used condition the (OSLC) assumption in \eqref{hyp:OSLC} to estimate   $-2hx_j b^h_k(x_j)$.
	Indeed
	\begin{align*}
		-hx_j b^h_k(x_j)=-h x_j\frac{1}{h}\int_{kh}^{(k+1)h} b_h(x_j,s)ds\le
		|x_j|^2	\int_{kh}^{(k+1)h}C_1(s)ds+ x_j \int_{kh}^{(k+1)h}b_h(s,0)ds.
	\end{align*}
    
	Iterating, we get
	\begin{align*}
		\int_{\R^d}|x|^2f^\D_{k+1}(dx)&\le (1+Ch)^{k+1}\int_{\R^d}|x|^2f^\D_{0}(dx)+  (k+1)( Ch+\Dx^2)\\
		&\le e^{(k+1)h}\int_{\R^d}|x|^2f^\D_{0}(dx)+(k+1)h\left(C+\frac{\Dx^2}{h}\right).
	\end{align*}
To prove \eqref{eq:cont_meas}, given a 1-Lipschitz function $\phi$, by \eqref{eq:duality_discrete} and \eqref{eq:dp_discrete3} we have
\begin{align*}
&\int_{\Rd}g(x)(f^\D_k(dx)-f^\D_{k'}(dx))=\int\big[\EE[g(\Phi^\D_{k,0}(x))]-\EE[g(\Phi^\D_{k',0}(x))]\big]f^\D_0(dx)\\
&\le \sum_j \EE|\Phi^\D_{k,0}(x_j)-\Phi^\D_{k',0}(x_j)|f^\D_0(dx)\le \sum_j (\EE|\Phi^\D_{k,0}(x_j)-\Phi^\D_{k',0}(x_j)|^2)^\half f^\D_0(dx)\\
&\le C(h(k-k'))^\half.
\end{align*}
	
\end{proof}
We extend $f^\D$ given by \eqref{eq:approx_measure} to an element of $C([0,T], \mP_1(\R^d))$ by setting
\begin{equation}\label{eq:interp_meas}
	f^\D(t)=\frac{(k+1)h-t}{h}	f^\D_k +\frac{t-kh}{h}f^\D_{k+1}(x),\quad t\in [kh, (k+1)h),\ k=0,\dots,N-1.
\end{equation}
It is immediate that $f^\D$ satisfies the properties corresponding to \eqref{eq:2_moment} and \eqref{eq:cont_meas} for any $t\in [0,T]$. By \cite[Lemma 2.1]{cs2}  the sequence $f^\D $ converges, up to a subsequence, in the space $C([0,T);\mP_1(\R^d))$. In the next result, we identify the limit as the duality solution of \eqref{eq:FP}.
\begin{thm}\label{thm:fully_to_continuous_FP}
	For $ \D=(h,\Dx)\to 0$ with $\Dx^2/h$ uniformly bounded, the measure $f^\D$ defined in \eqref{eq:interp_meas}	converges to the duality solution $f$ of \eqref{eq:FP} in  $C([0,T);\mP_1(\R^d))$.
\end{thm}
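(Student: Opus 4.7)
\emph{Overall strategy.} The plan is a compactness-plus-uniqueness argument: extract a subsequential $\mW_1$-limit of $\{f^\D\}$ using Lemma \ref{lem:reg_sol_fp}, identify it via the discrete duality formula \eqref{eq:duality_discrete} together with the convergence of the transport scheme (Theorem \ref{thm:fully_to_continuous_HJ}), and then appeal to uniqueness of the duality solution (Theorem \ref{thm:existence_FP}) to promote subsequential convergence to convergence of the full net.

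\emph{Compactness.} By Lemma \ref{lem:reg_sol_fp}, the time interpolant \eqref{eq:interp_meas} lies in a relatively compact subset of $C([0,T); \mP_1(\R^d))$, since it enjoys a uniform second-moment bound and a uniform $\mW_1$-H\"older-$1/2$ modulus in time. Invoking \cite[Lemma 2.1]{cs2}, along a subsequence $\D_n=(h_n,\Dx_n)\to 0$ one has $f^{\D_n}\to \tilde f$ uniformly in $\mW_1$ on $[0,T)$, and it then suffices to check $\tilde f = f$.

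\emph{Identification.} Fix $t\in[0,T)$ and a bounded Lipschitz test $g\in C_b(\R^d)\cap C^{0,1}(\R^d)$, and set $k_n:=\lfloor t/h_n\rfloor$, so that $t_{k_n}\to t$. Uniform convergence combined with $\mW_1$-continuity in time of $\tilde f$ gives $f^{\D_n}_{k_n} = f^{\D_n}(t_{k_n})\to \tilde f(t)$ in $\mW_1$, hence $\int g\, f^{\D_n}_{k_n}(dx) \to \int g\, \tilde f(t,dx)$. On the other hand, the discrete duality formula \eqref{eq:duality_discrete} together with the representation \eqref{eq:form_rapp_HJ} yields
$$\int_{\R^d} g(x)\, f^{\D_n}_{k_n}(dx) = \int_{\R^d} \phi_n(x)\, f^{\D_n}_0(dx), \qquad \phi_n(x) := \EE\bigl[g(\Phi^{\D_n}_{k_n,0}(x))\bigr],$$
where $\phi_n$ is the step-$0$ value of the backward scheme \eqref{eq:scheme_HJ} run with terminal datum $g$ at step $k_n$. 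Setting $\phi(x):=\EE[g(\Phi_{t,0}(x))]$, which by Theorem \ref{thm:existence_HJB} is the value at time $0$ of the viscosity solution of \eqref{eq:HJ} with terminal datum $g$ at time $t$, the proofs of Proposition \ref{prop:exist_semidiscrete_HJB}, Proposition \ref{prop:semi-to-fully} and Theorem \ref{thm:fully_to_continuous_HJ} adapt with $T$ replaced by $t_{k_n}$ to give $\phi_n\to\phi$ locally uniformly on $\R^d$, with a uniform Lipschitz and $L^\infty$ bound depending only on the structural constants in \eqref{hyp:OSLC}--\eqref{hyp:sigma_bounded} and on $T$. Since $\supp f^{\D_n}_0 \subset B(0, R+\Dx_n)\subset B(0,R+1)$ for small $\Dx_n$, and $\mW_1(f^{\D_n}_0, f_0)\to 0$ by Lemma \ref{convf0}, the splitting
$$\int\phi_n\, f^{\D_n}_0 - \int\phi\, f_0 = \int(\phi_n-\phi)\, f^{\D_n}_0 + \int \phi\,(f^{\D_n}_0 - f_0)$$
vanishes in the limit: the first term by local uniform convergence on the fixed compact set $B(0, R+1)$, the second by Kantorovich--Rubinstein applied to the Lipschitz function $\phi$. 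Definition \ref{def:dual_sol} then forces $\int g\,\tilde f(t,dx) = \int g\, f(t,dx)$, and varying $g$ and $t$ yields $\tilde f \equiv f$, which closes the argument.

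\emph{Main obstacle.} The delicate technical point is the compatibility between the transport scheme's convergence regime ($\Dx/h\to 0$ in Theorem \ref{thm:fully_to_continuous_HJ}) and the present conservative regime ($\Dx^2/h$ uniformly bounded). Under the latter the pointwise bound $\|\bar u^\D - u^h\|_\infty\lesssim \Dx/h$ from Proposition \ref{prop:semi-to-fully} ceases to be small, so the na\"ive pointwise convergence $\phi_n\to\phi$ must be replaced by a paired (dual) form of consistency: the interpolation error $I[\hat v]-v$ is of order $\Dx^2$ on smooth data by \eqref{eq:bound_interp_2} and appears inside an integral against the tight, compactly supported measure $f^{\D_n}_0$; summing the $N$ per-step contributions of order $\Dx^2$ yields a total error of order $\Dx^2/h$, which is bounded under the stated scaling and can be made to vanish by approximating $g$ by a smooth sequence and exploiting the $L^\infty$ control on $\phi_n$. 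This weak form of consistency is precisely what allows one to close the identification step within the conservative CFL regime.
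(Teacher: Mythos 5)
Your skeleton coincides with the paper's: compactness from Lemma \ref{lem:reg_sol_fp} and \cite[Lemma 2.1]{cs2}, identification of the limit through the discrete duality formula \eqref{eq:duality_discrete} and the convergence of the transport scheme, the splitting $\int(\phi_n-\phi)\,f^{\D}_0+\int\phi\,(f^{\D}_0-f_0)$ handled by local uniform convergence on the compact support of $f_0$ and by Kantorovich--Rubinstein with Lemma \ref{convf0}, then uniqueness of the duality solution. The paper's version inserts the semi-discrete flow $\Phi^h$ as an intermediate object and bounds the fully-discrete/semi-discrete gap by $C(\Dx+\Dx/h)$ via Proposition \ref{prop:semi-to-fully}; you fold this into the claim $\phi_n\to\phi$ locally uniformly. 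You are right that this is where the hypotheses pinch: that bound vanishes only if $\Dx/h\to 0$, which is strictly stronger than $\Dx^2/h$ bounded, and the paper passes to the limit without comment. However, your proposed repair does not close the gap. The $O(\Dx^2)$ estimate \eqref{eq:bound_interp_2} requires a $C^2$ interpoland, whereas the function re-interpolated at each step of \eqref{eq:scheme_HJ} is the discrete value function, for which only the Lipschitz bound \eqref{eq:Lipschitz} is available uniformly in $\D$; and even granting a per-step error $C\Dx^2\|D^2g_\delta\|_\infty$, the accumulated total $T(\Dx^2/h)\|D^2g_\delta\|_\infty$ is merely bounded under the stated scaling and blows up as the mollification parameter $\delta\to 0$, so smoothing $g$ cannot make it vanish. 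As written, both your argument and the paper's are complete only in the regime $\Dx/h\to 0$; under the weaker hypothesis actually stated, the identification step remains open.
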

\begin{proof} 
Consider a converging subsequence of $f^\D$ (still indexed by $\D$) and
assume that  $k_h h\to t$ for $h\to 0$. Then, given a 1-Lipschitz function  $g$, by Def. \ref{def:dual_sol} and \eqref{eq:duality_discrete},    we have that
	\begin{align*}
		&\int_{\R^d} g(x) f(t, dx) = \int_{\R^d} \EE[g(\Phi_{t,0}(x))] f_0(dx),\\
		&\int_{\R^d} g(x)df^\D_{k_h}(x)=\int_{\R^d}\EE[g(\Phi^\D_{k_h,0}(x))] f^\D_0(dx).		
	\end{align*}
Moreover $\EE[g(\Phi_{t,0}(x))]$ is the solution at time $0$ of \eqref{eq:HJ} with final datum $g(x)$ at time $t$ and $\EE[g(\Phi^\D_{k,0}(x))]$ is the solution of the scheme \eqref{eq:scheme_HJ} at step  $0$ with final datum $\hat g(i)=g(x_i)$  at step $k$. Hence, for $\Phi^h  _{k_h,0}$  defined as in \eqref{eq:Euler_chain}, we have
\begin{align*}
&\int_{\R^d} g(x)( f^\D  _{k_h}(dx)-   f(t, dx)) =\int_{\R^d}\EE[g(\Phi^\D  _{k_h,0}(x))]f^\D_0(dx)-\int_{\R^d}\EE[g(\Phi_{t,0}(x))] f_0(dx)\\
&=\int_{\R^d}\left(\EE[g(\Phi^\D  _{k_h,0}(x))-\EE[g(\Phi^h  _{k_h,0}(x))]\right)f^\D_0(dx)\\
&+\int_{\R^d}\EE[g(\Phi^h  _{k_h,0}(x))]f^\D_0(dx)-\int_{\R^d}\EE[g(\Phi_{t,0}(x))] f_0(dx),
\end{align*} 
By \eqref{eq:Lipschitz} and Prop. \ref{prop:semi-to-fully}, we have
\begin{equation}\label{eq:e_2}
	\begin{split}
	&\int_{\R^d}\left(\EE[g(\Phi^\D  _{k_h,0}(x))]-\EE[g(\Phi^h  _{k_h,0}(x))]\right)f^\D_0(dx)\\
	&	= \int_{\R^d}\left(\EE[g(\Phi^\D  _{k_h,0}(x))]-\EE[I[g(\Phi^\D  _{k_h,0}(\cdot))](x)]\right)f^\D_0(dx)\\	
	&+	 \int_{\R^d}\left(\EE[I[g(\Phi^\D  _{k_h,0}(\cdot))](x)]-\EE[g(\Phi^h  _{k_h,0}(x))]\right)f^\D_0(dx)\\
&	 \le  C \left(\Dx+\frac{\Dx}{h}\right)\int_{\R^d}f^\D_0(dx).
	\end{split}
\end{equation}
Arguing as in \eqref{eq:stima_lip}, we have that the function $\EE[g(\Phi^h  _{k_h,0}(x))]$ is Lipschitz continuous in $x$ with constant $L$, uniformly in $\D$. Hence
\begin{equation}\label{eq:e_3}
	\begin{split}
		&\int_{\R^d}\EE[g(\Phi^h  _{k_h,0}(x))]f^\D_0(dx)-\int_{\R^d}\EE[g(\Phi_{t,0}(x))] f_0(dx)\\
		&=L\int_{\R^d}\frac{1}{L}\EE[g(\Phi^h  _{k_h,0}(x))](f^\D_0(dx)- f_0(dx))\\
		&+\int_{\R^d}\Big(\EE[g(\Phi^h  _{k_h,0}(x))]-\EE[g(\Phi_{t,0}(x))]\Big)f_0(dx)\\
		&\le L\mW_1(f^\D_0,f_0)+\int_{\R^d}\left(\EE[g(\Phi^h  _{k_h,0}(x))]-\EE[g(\Phi_{t,0}(x))]\right)f_0(dx).
	\end{split}
\end{equation}
By \eqref{eq:e_2} and \eqref{eq:e_3}, we have 
\begin{equation}\label{eq:e_4}
\begin{split}
&\int_{\R^d} g(x)(  f^\D  _{k_h}(dx)-   f(t, dx))\le	C \left(\Dx+\frac{\Dx}{h}\right)\int_{\R^d}f^\D_0(dx)\\
&+ L\mW_1(f^\D_0,f_0)+\int_{\R^d}\left(\EE[g(\Phi^h  _{k_h,0}(x))]-\EE[g(\Phi_{t,0}(x))]\right)f_0(dx).
\end{split}
\end{equation}
By  \eqref{hyp:m_0} and the proof of Theorem \ref{thm:fully_to_continuous_HJ}, $\EE[g(\Phi^h  _{k_h,0}(x))]$ converges to $\EE[g(\Phi_{t,0}(x))]$  uniformly in $[0,T]\times\supp[f_0]$. Moreover, by  Lemma \ref{convf0}, $f^\D_0$ converges to $f_0$ in $\mW_1$. Therefore, passing to the limit in \eqref{eq:e_4}, we get
\[
\lim_{\D\to 0} \int_{\R^d} g(x)(  f^\D  _{k_h}(dx)-   f(t, dx))=0\quad \text{for any $g$ 1-Lipschitz.}
\]
Hence  $\mW_1(f^\D(t),f(t))\lra 0$ for $\D\to 0$ for any $t\in [0,T]$.
Since any converging subsequence of $f^\D$ converges to the duality solution $f$ of \eqref{eq:FP},   we conclude that all the sequence $f^\D$ converges fo $f$.
	\end{proof}
\section{Numerical illustration}
In this section, we provide some numerical example to illustrate the properties of the scheme.
\begin{figure}[ht]
\centering
\includegraphics[width=0.8\textwidth]{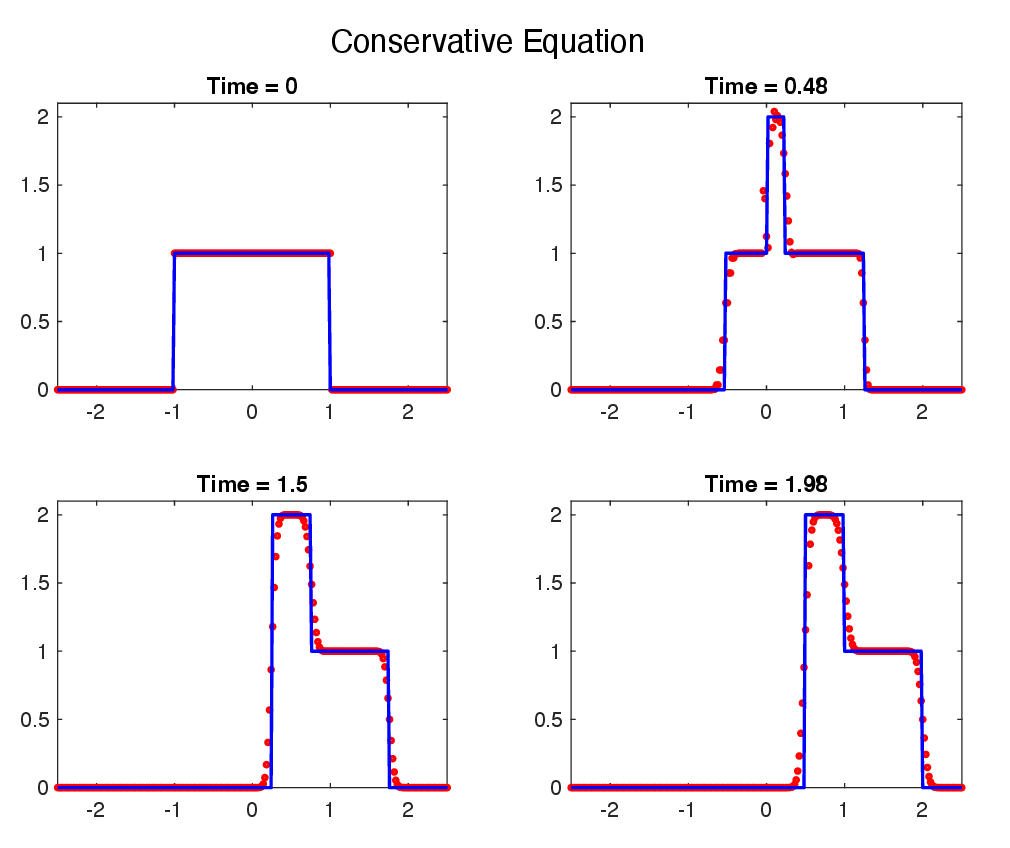}
\caption{Test 1. (Forward) Conservative equation. The discretization parameters are \(\Delta = (\Delta x, h) = (0.02, 0.06)\).}
\label{F:2}
\end{figure}
\begin{figure}[ht]
\centering
\includegraphics[width=0.8\textwidth]{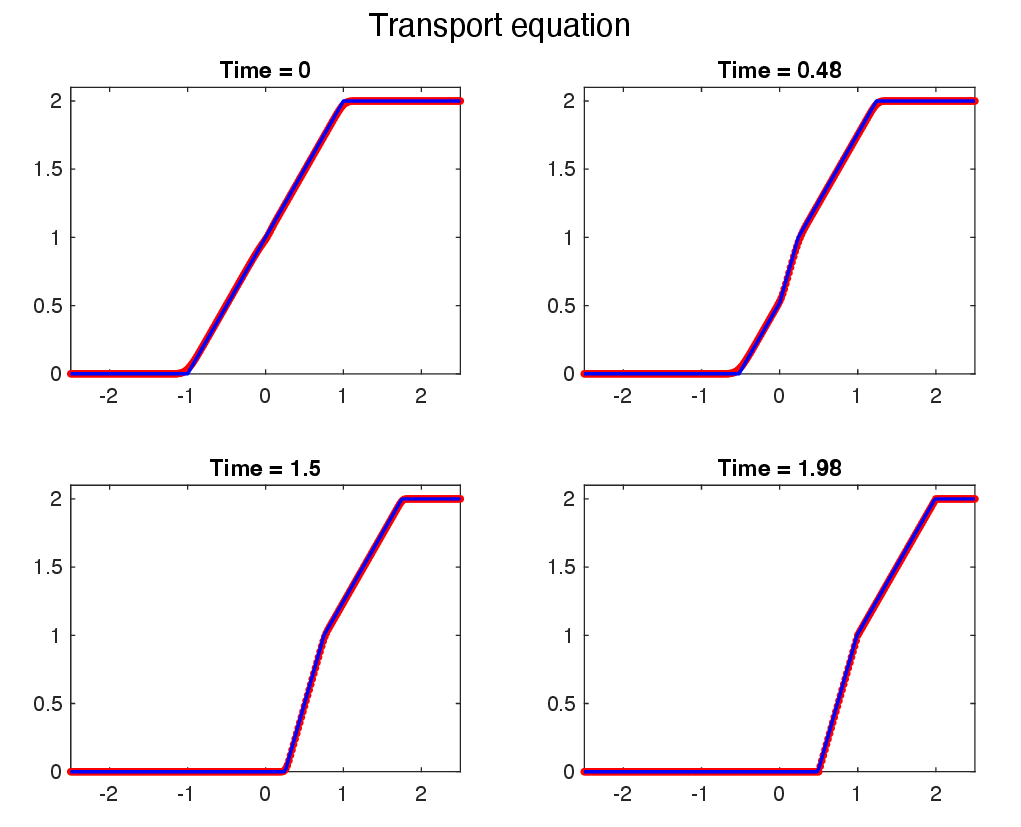}
\caption{Test 1. (Backward) Inviscid transport equation. The initial solution is chosen as the cumulative distribution function of the exact solution for the conservative case. Comparison between exact and numerical solutions at various times. Discretization parameters: \(\Delta = (\Delta x, h) = (0.02, 0.06)\).}
\label{F:1}
\end{figure}

\subsection*{Test 1}

We consider the   case of a first order one-dimensional problem for which we can compute  the analytical solutions, allowing direct comparison with the numerical approximations. This example is taken by \cite{dlv}, where a similar analysis for an upwind scheme   is carried out. 
We study equations \eqref{eq:HJ} and \eqref{eq:FP_1} with \( a(t,x) \equiv 0 \) and
\begin{equation*}
  -b(t,x) = \begin{cases}
1& \text{for } x < 0, \\
\frac{1}{2} & \text{for } x \geq 0;
\end{cases}
\end{equation*}
since the velocity field is $-b$ in its forward formulation (cfr. Remark \ref{rem:first_order}). Note that such jump in the velocity field verify the assumption (OSLC).
Coefficients of type arise  pretty naturally in supply chain models and similar problems involving (discontinuous) congested transportation flows \cite{FGM,fgu}.
We choose as initial condition for the conservative equation \( f_0(x) = \mathbbm{1}_{[-1,1]}(x) \). Then, the analytical solution   is given by 
\begin{equation*}
f(t,x) = 
\begin{cases}
\mathbbm{1}_{[-1+t,0)}(x)+ 2 \cdot \mathbbm{1}_{[0,t/2)}(x) + \mathbbm{1}_{[t/2,1+t/2)}(x) & \text{for } t \leq 1, \\
2 \cdot \mathbbm{1}_{[1/2(t-1),t/2)}(x) + \mathbbm{1}_{[t/2,1+t/2]}(x) & \text{for } t > 1.
\end{cases}
\end{equation*}
We observe that the   scheme  is highly effective to approximate the compression wave, even with pretty high discretization parameters.

\begin{figure}[ht]
\centering
\includegraphics[width=0.5\textwidth]{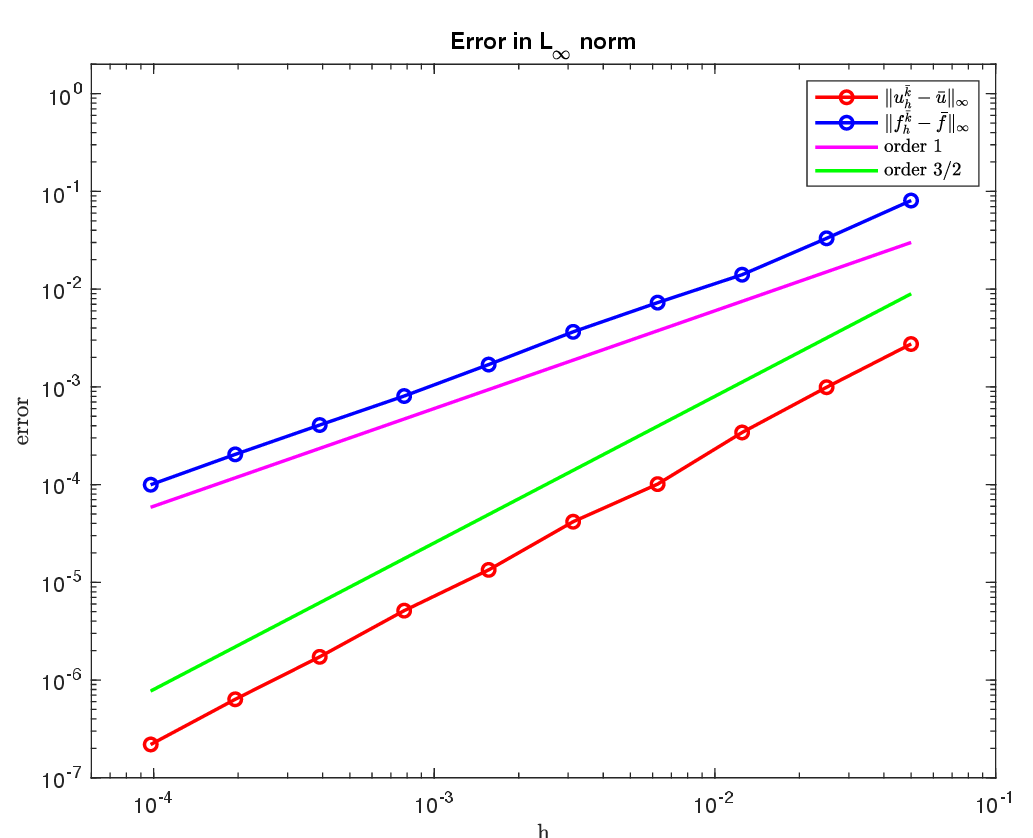}
\hspace{-0.5cm}
\includegraphics[width=0.5\textwidth]{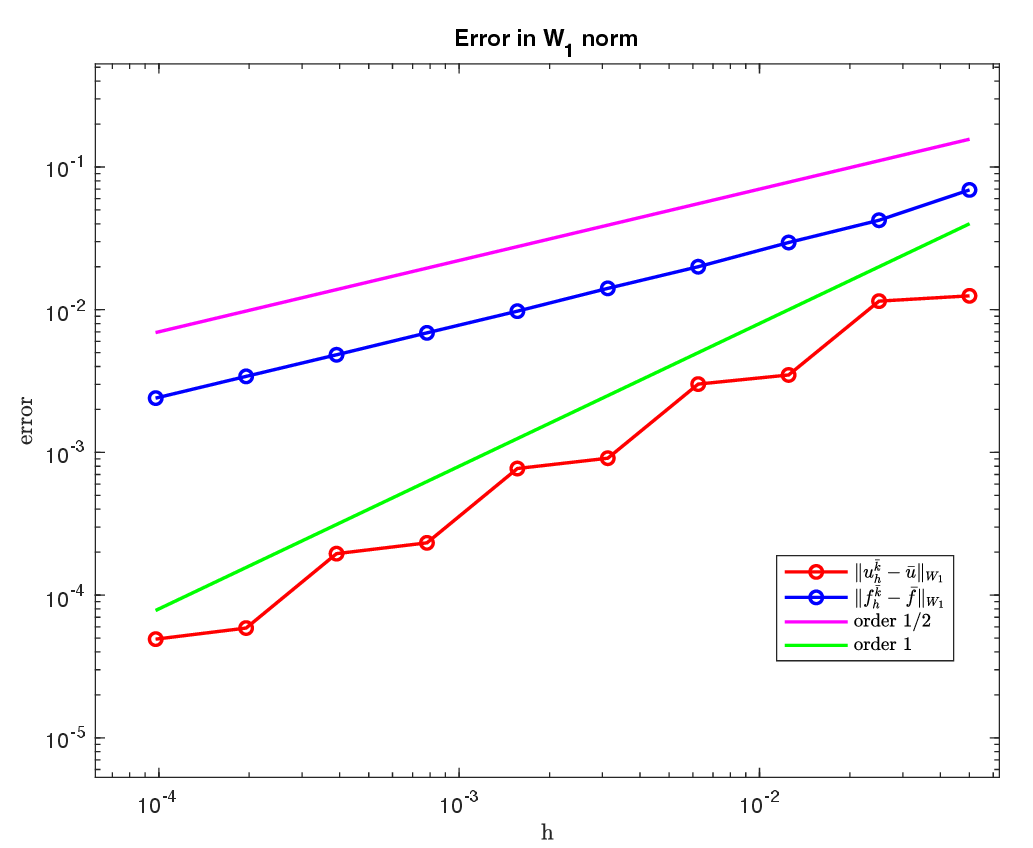}
\caption{Test 1. Comparison of convergence rates in \(L^\infty\) and Wasserstein norms for the two numerical schemes.}
\label{F:3}
\end{figure}
Figure \ref{F:2} shows the numerical results with discretization parameters \(  (\Delta x, h) = (0.02, 0.06)\). Note that no restrictive condition on the choice of \(h\) is required. Since this is the purely inviscid case (\(a \equiv 0\)), the scheme correctly captures the expected solution, without any oscillation around the discontinuity of \(b\). We highlight that here and in the following, to avoid some boundary effect not considered in this study, the approximation is performed in a numerical domain sufficiently big (in this case the interval $[-5,5]$). We stress also on the role of the mollifier $\rho:\R\rightarrow\R$ which, scaled by the time step $\rho_h(x)=h^{-1}\rho(x/h)$, provide a regularization of the discontinuous velocity field that vanishes when $h\rightarrow 0^+$. In this  test we take  
$$ \rho(x):=\frac{1}{\sqrt{2\pi}}e^{-\frac{x^2}{2}}.$$
Exploiting the duality between \eqref{eq:HJ} and \eqref{eq:FP}, we consider as final condition for the transport equation \eqref{eq:HJ} the cumulative distribution of \(f(2,x)\), obtaining
\begin{equation*}
u_T(x) =
\begin{cases}
0 & \text{for } x < 1, \\
2x-\frac{1}{2}  & \text{for } 1/2 \leq x < 1, \\
x  & \text{for } 1 \leq x < 2, \\
2 & \text{for } x \geq 2.
\end{cases}
\end{equation*}

The corresponding analytical solution for \(t \in [0,2]\) is given by
\begin{equation*}
u(t,x) =
\begin{cases}
(x+1-t)\mathbbm{1}_{[-1+t,0)}(x) + (2x+1-t) \cdot \mathbbm{1}_{[0,\frac{t}{2})}(x) + (x-\frac{t}{2}+1)\mathbbm{1}_{[\frac{t}{2},1+\frac{t}{2})}(x) \\
\hspace{8cm}+2\,\mathbbm{1}_{[1+\frac{t}{2},+\infty)}(x), \hspace{0.6cm} \text{for } t \leq 1, \\
(2\,x+1-t)\,\mathbbm{1}_{[\frac{1}{2}(t-1),\frac{t}{2})}(x) + (x-\frac{t}{2}+1) \mathbbm{1}_{[\frac{t}{2},1+\frac{t}{2}}(x) +2\,\mathbbm{1}_{[1+\frac{t}{2},+\infty)}(x),\\
\hspace{11.3cm}  \text{for } t > 1.
\end{cases}
\end{equation*}
Essentially, we use the backward transport equation to recover the initial state of the system. Thus, we expect the approximate solution to satisfy the relation \(u_x(t,x) \approx f(t,x)\), which holds exactly for the analytical solution under some regularity assumption. As shown in Figure \ref{F:1}, the numerical approximation matches the analytical solution very closely.

The good performance of the scheme is further confirmed in Figure \ref{F:3}, where the convergence rates are reported both in \(L^\infty\)-norm and in Wasserstein distance 
\[
\|\phi\|_{L^\infty} = \max_{i} |\phi_i|, \quad
\mW_1(\mu, \nu) = \int_{-\infty}^{+\infty} |F_\mu(x) - F_\nu(x)| \, dx,
\]
where \(F_\mu\) and \(F_\nu\) are the cumulative distribution functions of \(\mu\) and \(\nu\), respectively.

\subsection*{Test 2}

\begin{figure}[ht]
\begin{center}
\begin{tabular}{c}
\includegraphics[width=0.9\textwidth]{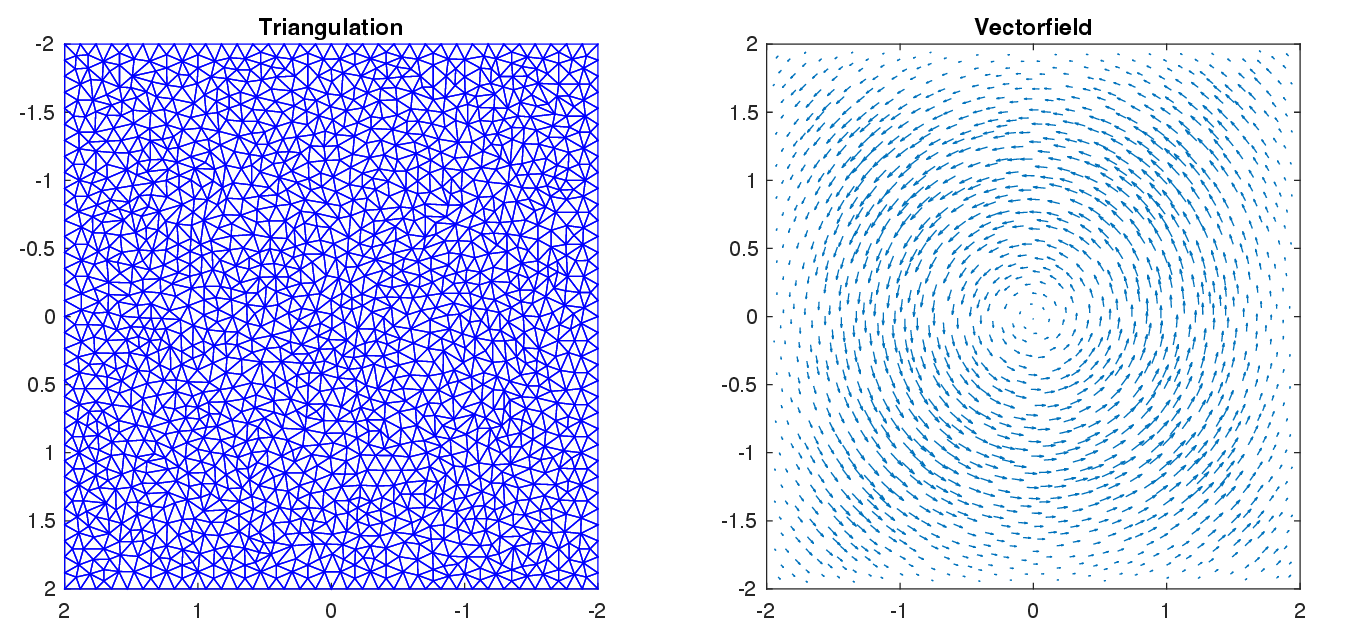}
\end{tabular}
\end{center}
\caption{Test 2. Triangulation used ($\Delta x=0.08$) and vector field $b$.}
\label{F:4.0}
\end{figure}

\begin{figure}[ht]
\begin{center}
\begin{tabular}{c}
\includegraphics[width=0.9\textwidth]{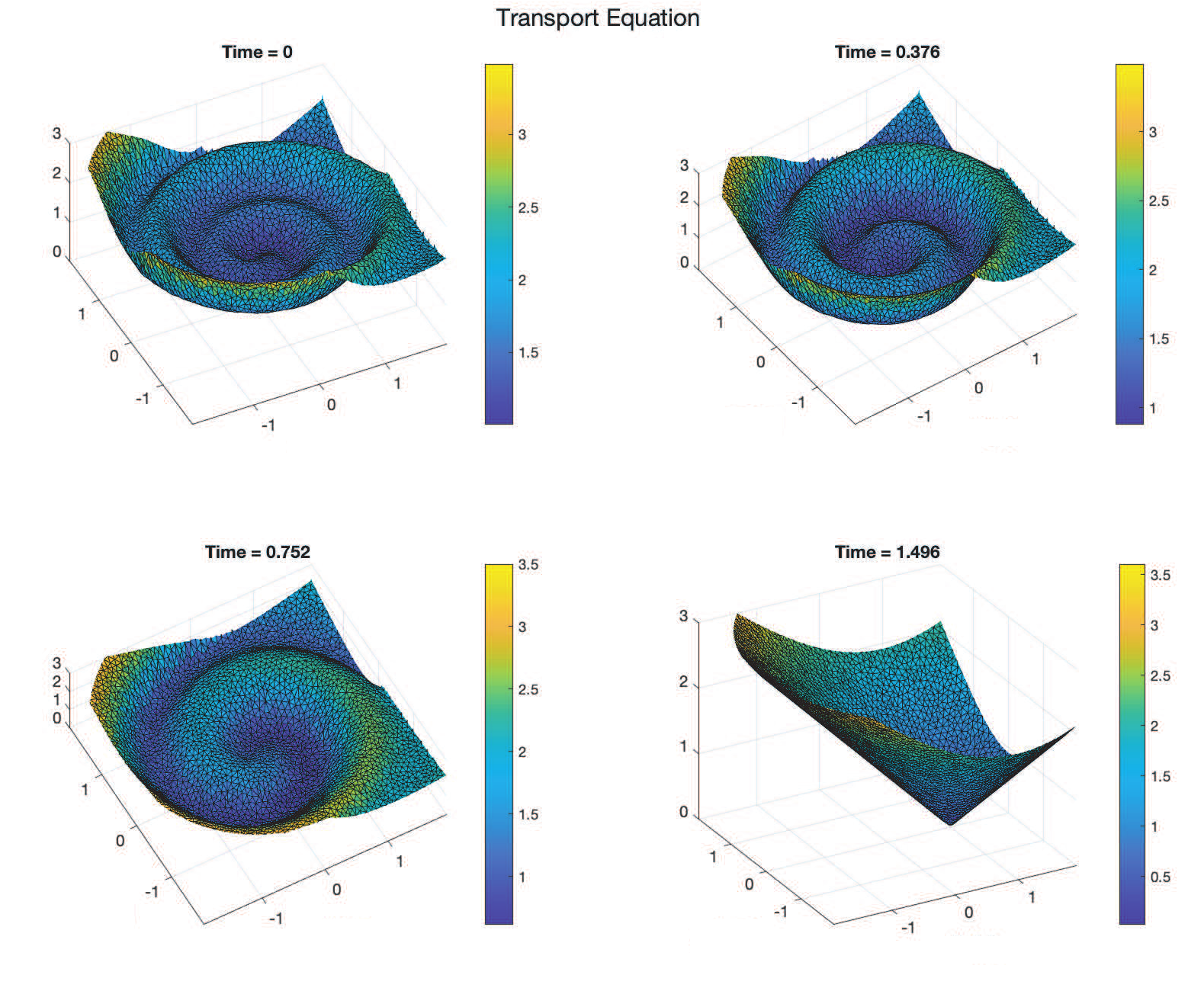}
\end{tabular}
\end{center}
\caption{Test 2. Solution of the transport equation at various moments of its evolution.}
\label{F:4}
\end{figure}

\begin{figure}[h]
\begin{center}
\begin{tabular}{c}
\includegraphics[width=0.9\textwidth]{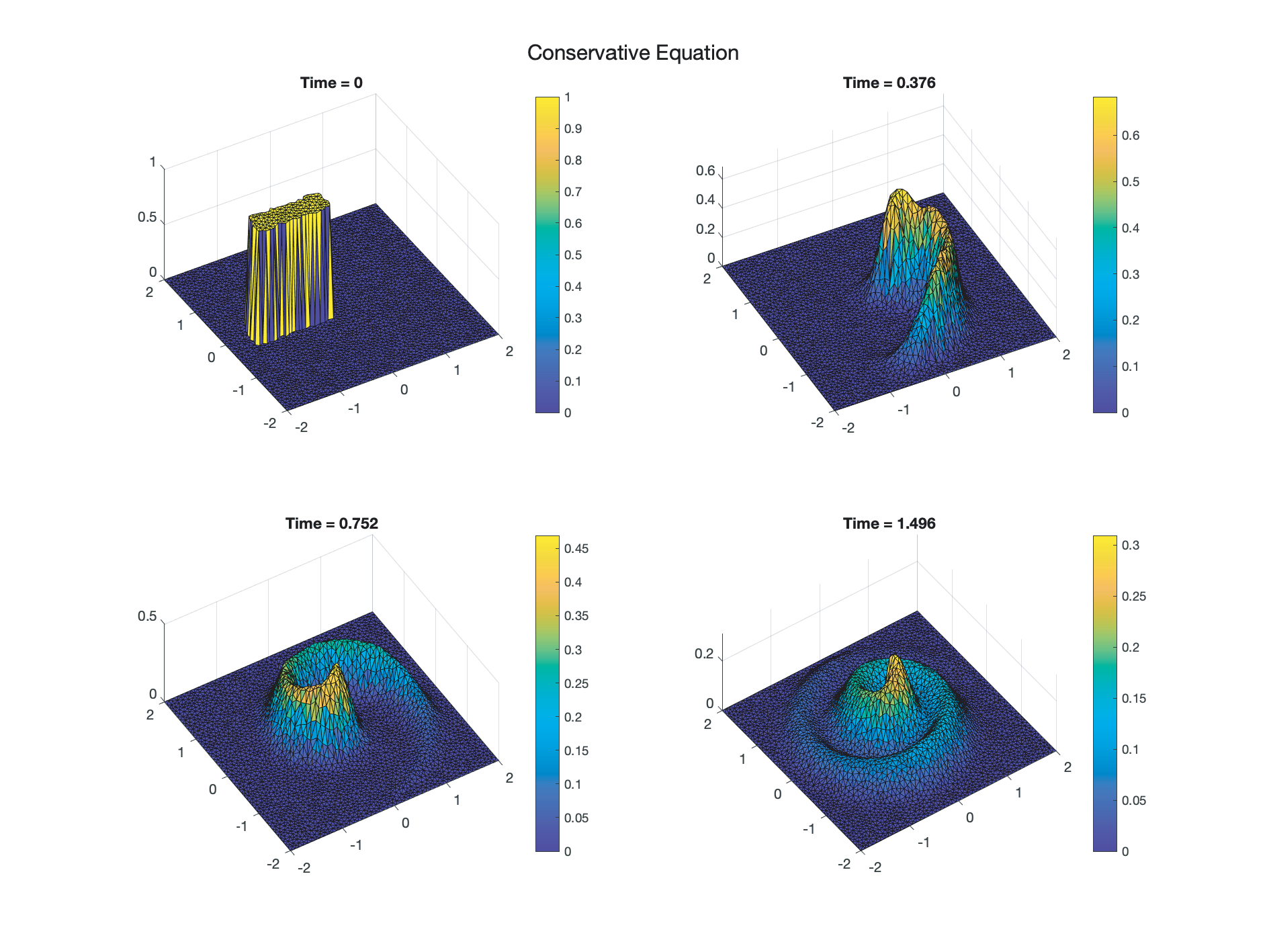}
\end{tabular}
\end{center}
\caption{Test 2. Solution of the conservation equation at various moments of its evolution.}
\label{F:5}
\end{figure}

\begin{figure}[ht]
\begin{center}
\begin{tabular}{c}
\includegraphics[width=0.9\textwidth]{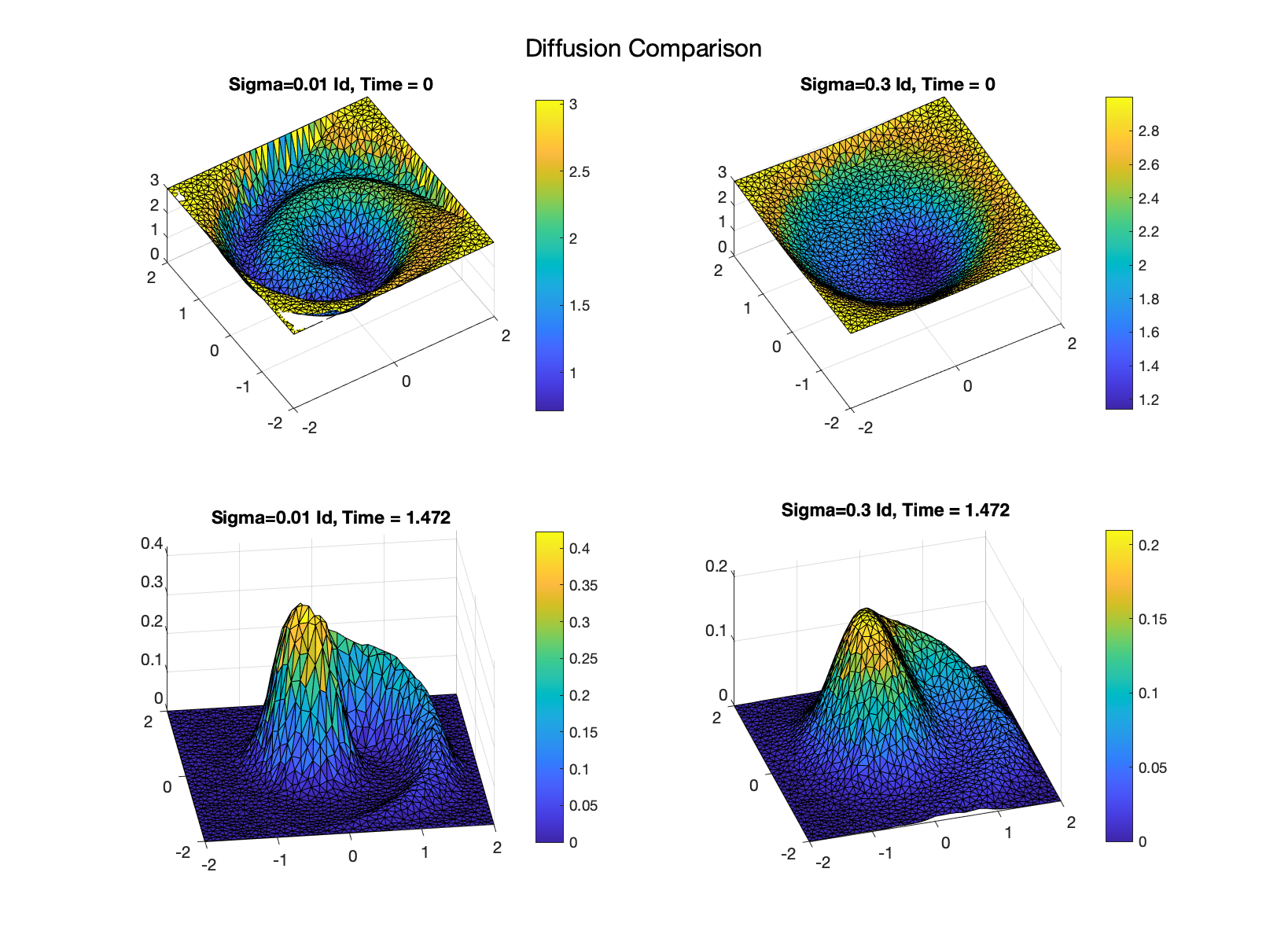}
\end{tabular}
\end{center}
\caption{Test 2. Comparison for various choices of $\sigma$: respectively,  $\sigma = 10^{-2}\text{Id}_2$ (left) and $\sigma = 0.3\,\text{Id}_2$ (right). }
\label{F:6}
\end{figure}

In the second test, we move to a two-dimensional case. The aim is to study the properties of the scheme with respect to a triangular grid. We consider a problem set in $[-2,2]^2 \subset \mathbb{R}^2$, with a continuous drift
\begin{equation*}
-b(t,x) = (2-\max(|x_1|,|x_2|))\,(x_2, -x_1),
\end{equation*}
a rotating velocity field that degenerates  near the origin and along the boundary of $[-2,2]^2$ (see Figure~\ref{F:4.0}). For this domain, we employ a triangular mesh generator with $\Delta x=0.08$ (corresponding to the maximum triangle area). The mesh is generated using the well-known software \texttt{Triangle}~\cite{Sh}, which produces a quality, conforming Delaunay triangulation of bounded 2D domains.

The other problem parameters are set as follows: $h = 2\Delta x$ and $a(t,x) \equiv \frac{1}{2}\sigma\sigma^T$, with $\sigma = 10^{-3}\,\text{Id}_2$, where $\text{Id}_2$ is the identity matrix of dimension 2. The time interval is $[0,T] = [0,1.5]$. 
In this case, we take as mollifier the standard bivariate distribution
$$ \rho(x):=\frac{1}{2\pi} \exp\left( -\frac{1}{2}(x_1^2 + x_2^2) \right).$$
We begin by considering the backward viscous transport equation with final condition
\[
u_T(x) = \sqrt{(x_1-1)^2 + x_2^2}.
\]
Figure~\ref{F:4} shows the approximate solution at different times during its evolution. In this case, the presence of a nondifferentiable point in the final condition (only Lipschitz continuity is required by Theorem~\ref{thm:fully_to_continuous_HJ}) does not affect the performance of the numerical scheme, which successfully approximates the solution without introducing spurious oscillations.

We now turn to analyzing the same choice of parameters for the viscous conservation equation, with initial condition
\[
f_0(x) = \mathbbm{1}_{[-1.5,-0.1] \times [-0.25,0.25]}(x),
\]
where the initial data is clearly discontinuous. Figure~\ref{F:5} displays the approximate solution at various times during its evolution. It is possible to observe that a higher density remains concentrated near the center of the domain (where the vector field $b$ is smaller), while the rest of the density moves more rapidly in a non-uniform circular motion.

In Figure \ref{F:6} we observe how various choices of the diffusion coefficient $\sigma$ influences the solution. We report here the previous test with, respectively,  $\sigma = 10^{-2}\text{Id}_2$ (left) and $\sigma = 0.3\,\text{Id}_2$ (right).

\begin{figure}[ht]
\begin{center}
\begin{tabular}{c}
\includegraphics[width=0.9\textwidth]{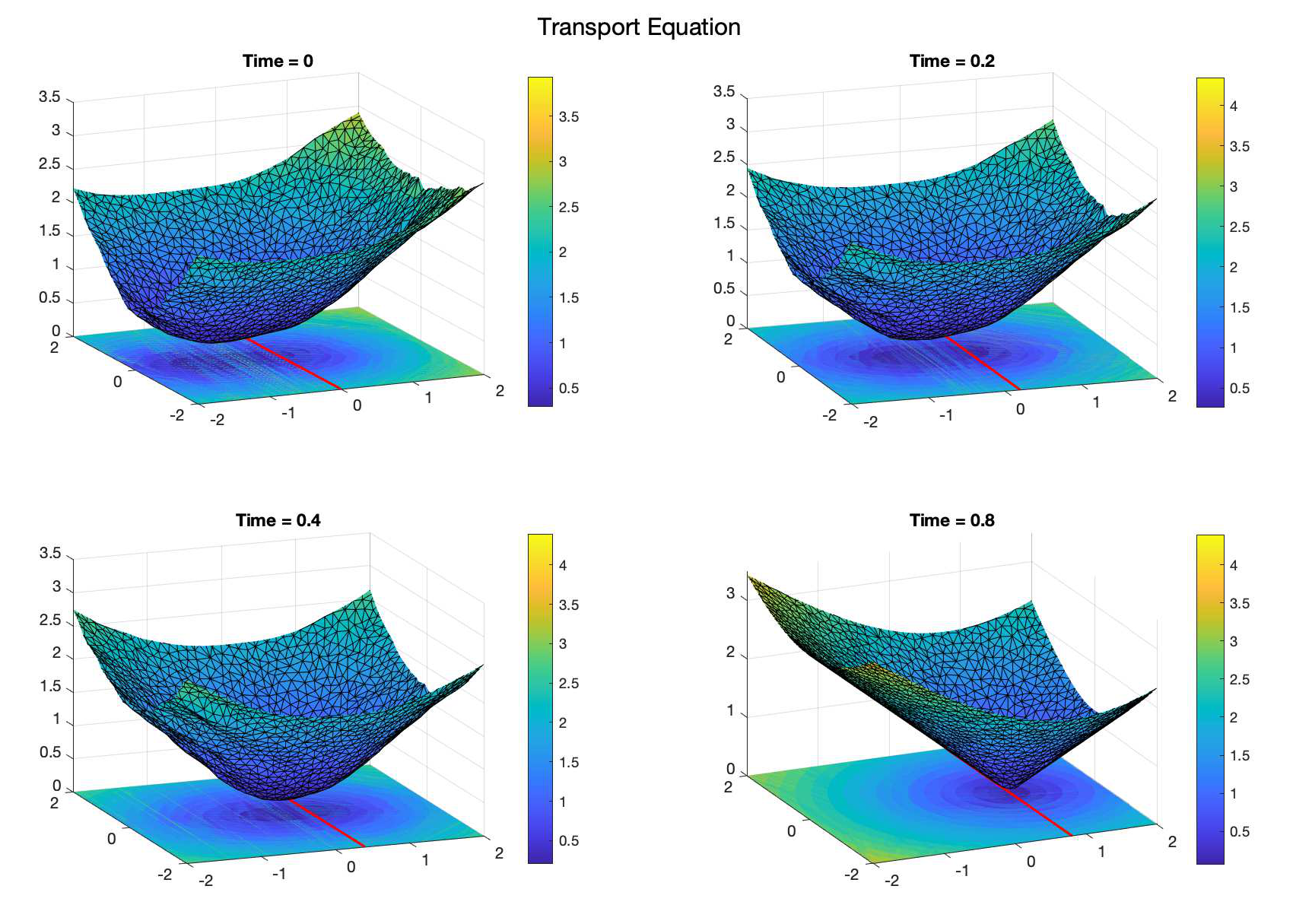}
\end{tabular}
\end{center}
\caption{Test 3. Transport equation. The red line marks the position, varying in time, of the discontinuity in the velocity field. The contour lines are also displayed for a better interpretation of the plot.  }\label{F:7}
\end{figure}

\begin{figure}[ht]
\begin{center}
\begin{tabular}{c}
\includegraphics[width=0.9\textwidth]{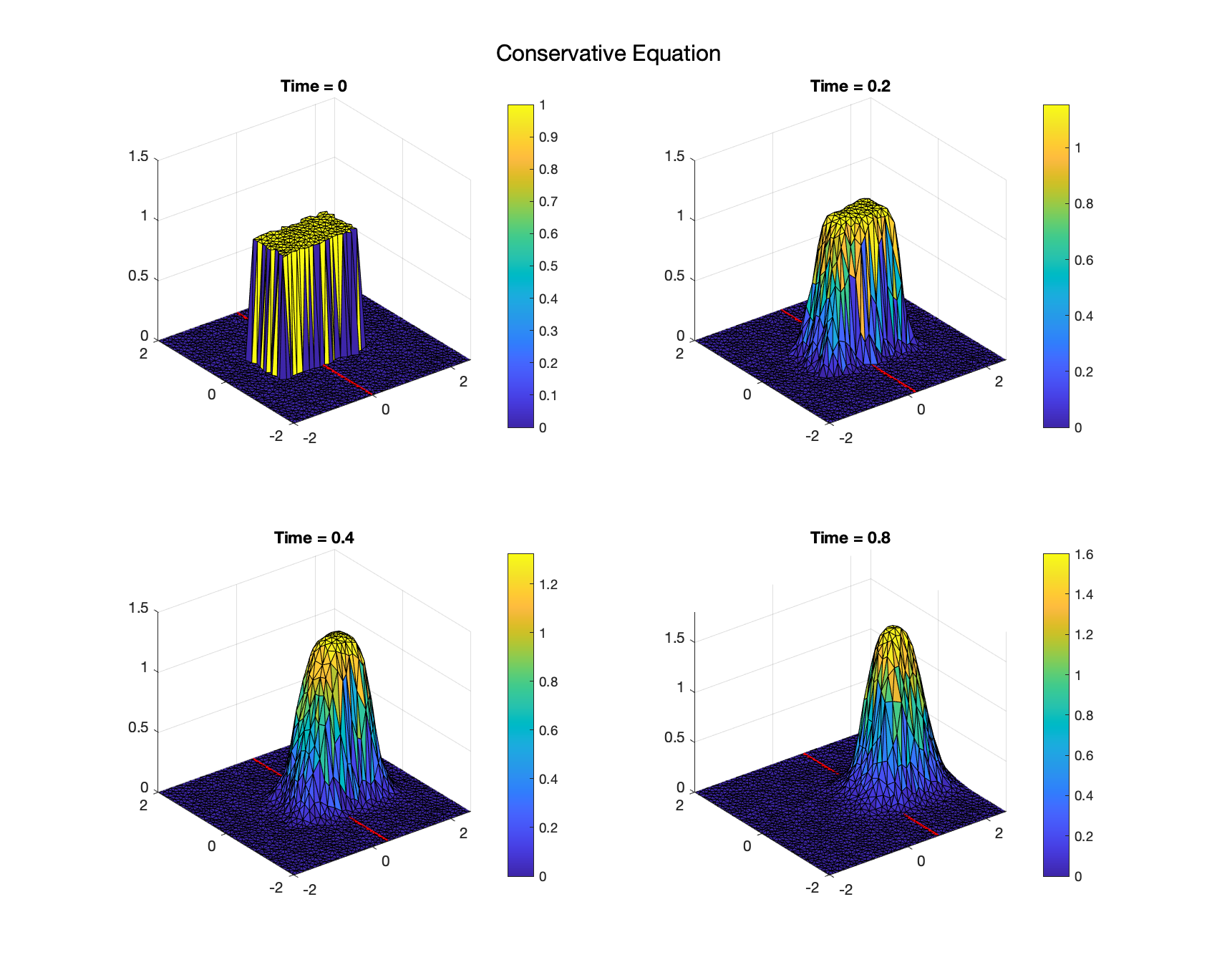}
\end{tabular}
\end{center}
\caption{Test 3. Conservation equation. The red line marks the position, varying in time, of the discontinuity in the velocity field. The initial density concentrates while passing through the moving discontinuity.}\label{F:8}
\end{figure}

\subsection*{Test 3}

In our third test we pass to the case of a discontinuous velocity field $b$, time-varying, that verifies the (OSLC) condition. We consider

{\[-b(t,x,y)=\begin{cases}
	(3/2,0)\quad& x<t\\
	(1/2,0)\quad& x\ge t
\end{cases}\]
where we can notice that the discontinuity moves in the same direction of the motion with unitary speed, while beyond the discontinuity, since the speed is lower, we expect then that a compression wave will appear. In this test the diffusion coefficient is variable, while   the scenario remains convection dominant. We set
 \[\sigma(t,x)=0.1 \,|\cos(\pi\,x_1)\cos(\pi\,x_2)|\,\text{Id}_2\]
 where we observe that such function degenerate to zero in some points of the domain.

In Figure \ref{F:7} we observe the (backward) evolution of the viscous transport equation in the time interval $[0,0.8]$, starting from the final condition

\[
u_T(x) = \sqrt{(x_1-0.8)^2 + x_2^2}.
\]
Here the discretization parameters are set as $(\Delta x,h)=(0.01,0.02)$.
In the figure, the red line marks the position of the discontinuity in the velocity field $b$. Such discontinuity is visible in the gradient of the approximated solution, while the diffusion tends to smooth up the minimum point of the initial solution. Also in this case, to avoid boundary effects, the problem is computed in a much larger domain (in particular $[-4,4]^2$).

We now turn to analyzing the same choice of parameters for the viscous conservation equation, with initial condition
\[
f_0(x) = \mathbbm{1}_{[-1,1] \times [-0.5,0.5]}(x).
\]
 Figure~\ref{F:8} shows the approximate solution at several time points during its evolution. A compression wave can be observed, as indicated by the fact that the peak concentration at the final time exceeds the maximum of the initial condition. This concentration effect is smoothed by diffusion and by the fact that the discontinuity propagates in the same direction as the drift.

\section{Conclusions}

In this work, we developed and analyzed semi-Lagrangian schemes on unstructured meshes for viscous transport and conservative equations with coefficients satisfying a one-sided Lipschitz condition. By leveraging a probabilistic framework and the dual characterization of solutions as viscosity and duality solutions, we proved convergence under minimal regularity assumptions. The schemes exhibit strong robustness in the presence of irregular data and extend classical semi-Lagrangian methods beyond the Di Perna-Lions setting.

Numerical results confirm convergence to the correct solution and validate the method’s accuracy and stability on unstructured grids, making it particularly effective for high-dimensional and geometrically complex domains. The approach is especially relevant for Mean Field Games, where monotone interactions and degenerate dynamics frequently arise. Its compatibility with measure-theoretic formulations and probabilistic interpretations enhances its suitability for modeling socio-economic and collective behavior dynamics.

Future directions include extending the method to fully nonlinear second-order equations and systems with additional couplings or controls.



\end{document}